\newtheorem{prop}{Proposition}[section]
\newtheorem{thm}[prop]{Theorem}
\newtheorem{cor}[prop]{Corollary}
\newtheorem{ques}[prop]{Question}
\newtheorem{conj}[prop]{Conjecture}
\theoremstyle{definition}
\newtheorem{de}[prop]{Definition}
\newtheorem{example}[prop]{Example}
\theoremstyle{remark}
\newtheorem*{remark}{Remark}            
\def\complex{{\mathbb C}}
\def\C{{\mathbb C}}
\def\CP{{\mathbb C \mathbb P}}
\def\zed{{\mathbb Z}}
\def\Z{{\mathbb Z}}
\def\real{{\mathbb R}}
\def\R{{\mathbb R}}
\def\int{\mathop{\rm int}\nolimits}
\def\cl{\mathop{\rm cl}\nolimits}
\def\dim{\mathop{\rm dim}\nolimits}
\def\max{\mathop{\rm max}\nolimits}
\def\id{\mathop{\rm id}\nolimits}
\def\U{\mathop{\rm U}\nolimits}
\def\SO{\mathop{\rm SO}\nolimits}
\def\co{\colon\thinspace}
\begin{document}
\title{Smooth embeddings with Stein surface images}
\author{Robert E. Gompf}
\thanks{Partially supported by NSF grants DMS-0603958 and 1005304.}
\address{Department of Mathematics, The University of Texas at Austin,
1 University Station C1200, Austin, TX 78712-0257}
\email{gompf@math.utexas.edu}
\date{January 8, 2013.}
\begin{abstract}
A simple characterization is given of open subsets of a complex surface that smoothly perturb to Stein open subsets. As applications, $\C^2$ contains domains of holomorphy (Stein open subsets) that are exotic $\R^4$'s, and others homotopy equivalent to $S^2$ but cut out by smooth, compact 3-manifolds. Pseudoconvex embeddings of Brieskorn spheres and other 3-manifolds into complex surfaces are constructed, as are pseudoconcave holomorphic fillings (with disagreeing contact and boundary orientations). Pseudoconcave complex structures on Milnor fibers are found. A byproduct of this construction is a simple polynomial expression for the signature of the $(p,q,npq-1)$ Milnor fiber. Akbulut corks in complex surfaces can always be chosen to be pseudoconvex or pseudoconcave submanifods. The main theorem is expressed via Stein handlebodies (possibly infinite), which are defined holomorphically in all dimensions by extending Stein theory to manifolds with noncompact boundary.
\end{abstract}
\maketitle


\section{Introduction}

There has been much research in the past century devoted to finding {\em Stein manifolds}. In light of such extensive research, many equivalent definitions of these have arisen, perhaps the most succinct being that a Stein $n$-manifold is a complex $n$-manifold (with real dimension $2n$) that admits a proper, holomorphic embedding into some complex Euclidean space $\C^N$. A fundamental theorem of Eliashberg \cite{E} (see also \cite{CE}) gives a complete topological characterization of those smooth $2n$-manifolds that admit Stein structures. The statement is quite simple except when $n=2$, the case of {\em Stein surfaces}. In that case, the characterization is challenging to apply in practice, although it can be simplified by passing to the topological ($C^0$) category to allow exotic smooth structures \cite{Ann}. The success of abstractly characterizing Stein manifolds leads to a more subtle ambient problem: Every open subset $U\subset X$ of a complex manifold inherits a complex structure --- when does this make $U$ into a Stein manifold? (When $X$ is itself a Stein manifold, e.g.\ $\C^n$, such a Stein $U$ is classically called a {\em domain of holomorphy}.) Since the answer can change under $C^0$-small perturbations of $U$, we pose a more topological version of the problem: When is $U$ {\em isotopic} to a Stein manifold? That is, when is the inclusion of $U$  smoothly homotopic through embeddings to one whose image is a Stein open subset? (In this paper, an embedding is a diffeomorphism onto its image, not necessarily proper.) Eliashberg's method can again answer this question, although the $n=2$ case is again more difficult. A broad exposition of both the abstract and ambient theories is given in \cite{yfest}. When $n=2$, the ambient theory again has a topological theorem bypassing the main difficulty of the smooth version by allowing topological isotopies that may change the diffeomorphism type of $U$; this will be given in \cite{steintop} (see also \cite{JSG}). The applications of this are powerful, but have the drawback that the resulting Stein surfaces typically are not diffeomorphic to the interior of any compact manifold with boundary.  In the present paper, we instead require everything to be smooth, so that we can control diffeomorphism types and obtain Stein surfaces $U\subset X$ with smooth, compact, pseudoconvex boundaries. As applications, we construct Stein embeddings of exotic $\R^4$'s in $\C^2$, as well as various Stein embeddings of compact 4-manifolds and pseudoconvex embeddings of 3-manifolds into complex surfaces. When these complex surfaces are compact, this allows the study of {\em concave} holomorphic fillings of 3-manifolds.

The main principle of this paper is that the only obstruction to isotoping $U\subset X$ to a Stein open subset is that of abstractly making $U$ Stein. That is, the ambient problem reduces to the abstract problem, which has been solved by Eliashberg's Theorem. As discussed in \cite{yfest}, this is true in all dimensions, and for $n=2$ is also true in the topological setting \cite{JSG}, \cite{steintop}. The present paper is devoted to the statement, proof and applications of the smooth version with $n=2$.  The main theorem, Theorem~\ref{main}, was announced in \cite{yfest}, but is now cleaner and has a much simpler proof. We ultimately present the theorem with enough generality to apply to \cite{steintop}, but we focus on simpler applications. For the moment, we give a version of the theorem that is both easy to state and fairly general. This version now also follows more systematically, in all dimensions, from \cite{CE} (Theorem~13.8 of that reference, with $J$ inherited from the complex surface, and Weinstein structure constructed from the given Stein structure).

\begin{thm} \label{smooth}
An open subset $U$ of a complex surface is smoothly isotopic to a Stein open subset if and only if the induced complex structure on $U$ is homotopic (through almost-complex structures on $U$) to a Stein structure on $U$.
\end{thm}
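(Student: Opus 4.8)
The forward implication (an isotopy to a Stein open subset forces the induced structure to be homotopic to a Stein one) requires no real work. Given a smooth isotopy $\iota_t\co U\hookrightarrow X$, $t\in[0,1]$, with $\iota_0$ the inclusion and $\iota_1(U)$ a Stein open subset, each $\iota_t(U)$ is open in $X$ by invariance of domain, so I may set $J_t=(d\iota_t)^{-1}\circ J_X\circ d\iota_t$, the almost-complex structure on $U$ making $\iota_t\co(U,J_t)\to(\iota_t(U),J_X)$ biholomorphic. Then $J_t$ is integrable and depends continuously on $t$, $J_0$ is the induced structure on $U$, and $J_1$ is biholomorphic to the Stein structure on $\iota_1(U)$ and hence is itself Stein; so the induced structure is homotopic through almost-complex structures to a Stein structure.

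For the converse the plan is to realize an abstract Stein structure on $U$ ambiently, handle by handle, which is exactly what the main theorem, Theorem~\ref{main}, accomplishes. Assume the induced structure $J_0$ is homotopic to a Stein structure $J_1$ on $U$. From $J_1$, standard Stein handle theory produces a (possibly infinite) Stein handlebody structure on $U$: an exhausting strictly plurisubharmonic Morse function $\varphi\co U\to[0,\infty)$ with all critical points of index $\le 2$, the index-$2$ handles attached along Legendrian knots with framing $tb-1$, and underlying almost-complex structure homotopic to $J_1$, hence to $J_0$. I then construct an isotopy of $U$ in $X$, starting from the inclusion, so that for a cofinal sequence $c_1<c_2<\cdots$ of regular values the image of each sublevel set $\varphi^{-1}[0,c_k]$ is a compact domain $W_k\subset X$ with $J_X$-pseudoconvex boundary, nested $W_1\subset W_2\subset\cdots$; then $\bigcup_k W_k$ is a Stein open subset isotopic to $U$. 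The ``main principle'' is that the two ingredients needed to make $U$ Stein --- a handle decomposition of index $\le 2$ and the homotopy class of the almost-complex structure --- are already present ambiently, the latter because $J_X|_U$ \emph{is} $J_0$; so the handlebody can be assembled with $J_X$ in place. Crucially, one is not free to deform $J_X$ near the handles, since the final image must be Stein for the \emph{given} complex structure of $X$.

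The inductive step divides by handle index. The subcritical part is the gentle case: a generic $1$-complex in a complex surface is totally real (real dimension $1<2$), hence has arbitrarily small pseudoconvex neighborhoods, and any two regular neighborhoods of a fixed embedded $1$-complex are ambiently isotopic; so a small generic isotopy makes the $1$-skeleton totally real, a small Stein regular neighborhood of it serves as $W_1$, and later subcritical handles are routed through $X$ by general position so that their cores are totally real, preserving pseudoconvexity. The hard case --- which I expect to be the main obstacle --- is attaching an index-$2$ handle to a pseudoconvex compact $W\subset X$ and keeping it pseudoconvex: its attaching circle must be isotoped within $\partial W$ to a knot that is Legendrian for the induced contact structure and attached with framing $tb-1$. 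Legendrianizing a knot is a $C^0$-small ambient move, so the real difficulty is the framing: the attaching framing is dictated by the fixed diffeomorphism type of $U$, whereas $tb$ of a Legendrian representative is bounded above within its ambient isotopy class and its rotation number is constrained by $c_1$. This is precisely where the hypothesis enters: the homotopy from $J_0$ to the Stein structure pins down the relevant relative Chern-class data --- equivalently, the rotation numbers of the attaching circles --- and then, exploiting the freedom both to modify the abstract Stein structure (for instance by introducing cancelling handle pairs) and to reposition each handle in $X$, together with Legendrian stabilization (which lowers $tb$, is $C^0$-small, and preserves the ambient isotopy class), one arranges all the required framings simultaneously. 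Establishing that no conflict between ambient position and the Stein conditions can ever arise is the heart of the argument.

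Finally I would check that the successive isotopies over the $W_k$ patch together into a single smooth isotopy of $U$ in $X$ and that the resulting open subset is diffeomorphic to $U$, the handle data having only been repositioned; for infinite handlebodies this forces one to work with Stein structures on manifolds with noncompact boundary, which is the framework in which Theorem~\ref{main} is stated. Alternatively, once the Stein-to-Weinstein correspondence is available, the converse implication follows more systematically from \cite[Theorem~13.8]{CE}, applied with $J$ taken to be $J_X$ along $U$ and the Weinstein structure built from $J_1$; I would present that as the conceptually cleanest proof and fall back on the explicit handlebody construction only where the sharper control demanded by \cite{steintop} is needed.
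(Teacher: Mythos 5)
Your proposal is correct and follows essentially the same route as the paper: both directions ultimately reduce to Theorem~\ref{main} (the paper's own proof invokes Theorem~\ref{main2} directly, or Theorem~\ref{main} after converting the Stein surface to the interior of a Stein handlebody via Lemma~A.2 of \cite{yfest}), or equivalently to \cite[Theorem~13.8]{CE}, both of which you identify. One imprecision in your sketch of the $2$-handle step is worth flagging for your own understanding of Theorem~\ref{main}: once $f|\partial H_1$ has been isotoped to a contactomorphism (which is part of the induction hypothesis there), the Legendrian attaching circle and its $tb$-framing carry over automatically, so there is no framing conflict to negotiate and no role for cancelling handle pairs or stabilization at that point; the genuine remaining obstruction is whether the $2$-handle's core disk $f(D)$ can be isotoped, rel its $1$-jet along $\partial H_1$, to a totally real disk, and this is governed by the two Eliashberg--Harlamov reality obstructions, which vanish exactly because the homotopy hypothesis from $J_0$ to the Stein structure fixes the induced contact structure on $\partial H_1$ --- that is where your hypothesis does its work, not in any $tb$ bookkeeping.
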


We prove this in Section~\ref{Smooth}; it follows either from Theorem~\ref{main} or from a simplified version of its proof. As an immediate application, we have:

\begin{cor} \label{R4}
There are uncountably many diffeomorphism types of exotic $\R^4$'s realized as domains of holomorphy in $\C^2$. That is, there are uncountably many Stein open subsets of $\C^2$ that are homeomorphic to $\real^4$ but pairwise nondiffeomorphic. These can all be found inside a preassigned open ball in $\C^2$, and so arise inside any complex surface.
\end{cor}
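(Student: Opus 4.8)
The plan is to read Corollary~\ref{R4} off from Theorem~\ref{smooth}, whose whole purpose is to convert the ambient question ``is the open set $U\subset X$ isotopic to a Stein open subset?'' into the intrinsic one ``does $U$ carry a Stein structure in the correct homotopy class of almost-complex structures?''. The only external ingredient is then a sufficiently rich supply of exotic $\R^4$'s that are already known to be Stein abstractly. Concretely, I would start from the known fact that there is an uncountable collection $\{R_\alpha\}_{\alpha\in A}$ of pairwise nondiffeomorphic smooth $4$-manifolds, each homeomorphic to $\R^4$, each carrying a Stein structure $J_\alpha$, and each \emph{small}, i.e.\ admitting a smooth embedding into the standard $\R^4$. (Such $R_\alpha$ are classical: for instance thickened Casson handles, or increasing unions of compact Stein domains; they are Stein by Eliashberg's theorem, and the uncountably many diffeomorphism types are separated by gauge-theoretic invariants of the ambient closed $4$-manifolds, in the spirit of Donaldson and Taubes.)

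Fix the preassigned open ball $B\subset\C^2$ and regard $B$ as a complex surface in its own right. Since $\R^4$ is diffeomorphic to any open $4$-ball, and since each $R_\alpha$ embeds smoothly in $\R^4$, each $R_\alpha$ embeds smoothly in $B$; composing first with a reflection of $\R^4$ if necessary, we may take this embedding to carry the orientation determined by $J_\alpha$ to the complex orientation of $B$, so no issue of chirality of $R_\alpha$ arises. Viewing $R_\alpha$ as an open subset of $B$ in this way, its induced complex structure $J$ is an (integrable) almost-complex structure inducing the same orientation as $J_\alpha$. Now almost-complex structures on an oriented $4$-manifold $M$ are precisely the sections of a bundle over $M$ with fiber $\SO(4)/\U(2)\cong S^2$; since $R_\alpha$ is contractible this bundle is trivial and $[R_\alpha,S^2]$ is a single point, so $J$ is homotopic through almost-complex structures on $R_\alpha$ to $J_\alpha$, which is Stein.

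Theorem~\ref{smooth}, applied to the open subset $R_\alpha\subset B$, therefore provides a smooth isotopy --- through embeddings into $B$ --- from the inclusion $R_\alpha\hookrightarrow B$ to an embedding whose image $V_\alpha\subset B$ is a Stein open subset of $B$. Since the complex structure on $B$ is simply the restriction of that on $\C^2$, $V_\alpha$ is a Stein open subset of $\C^2$ --- equivalently a domain of holomorphy --- contained in $B$, and $V_\alpha$ is diffeomorphic to $R_\alpha$, hence homeomorphic to $\R^4$. Letting $\alpha$ range over $A$ yields uncountably many pairwise nondiffeomorphic such $V_\alpha$ inside $B$. Finally, every complex surface contains a biholomorphic copy of a small ball in $\C^2$, and since the construction works equally for a ball of any size, the whole family can be transported there; this gives the last sentence of the corollary.

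The genuine work is all in Theorem~\ref{smooth} (equivalently, in the main theorem); Corollary~\ref{R4} is bookkeeping once that is available, so if anything the ``hard part'' is merely importing the family $\{R_\alpha\}$ from the literature. The two small points that need care are the orientation matching in the second paragraph --- handled by the reflection trick, so chirality of exotic $\R^4$'s is irrelevant --- and the trivial but load-bearing observation that a Stein open subset of the ball $B$ is automatically a Stein open subset of $\C^2$. The latter is exactly what allows Theorem~\ref{smooth} to be applied to $B$ rather than to all of $\C^2$, thereby confining the entire construction to the preassigned ball.
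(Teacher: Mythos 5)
Your proof is correct and follows essentially the same route as the paper's: import the uncountable family of small Stein exotic $\R^4$'s (the paper cites \cite{Ann} specifically for this), embed each in a ball with the correct orientation, use contractibility to match homotopy classes of almost-complex structures, and apply Theorem~\ref{smooth}. The only differences are expository: you spell out the reflection trick and the observation that a Stein open subset of $B$ is one of $\C^2$, both of which the paper leaves implicit.
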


\noindent Exotic $\R^4$'s (manifolds homeomorphic but not diffeomorphic to $\R^4$) have a complex history, tracing back to Casson \cite{C}. DeMichelis and Freedman \cite{DF} first showed that uncountably many diffeomorphism types of  exotic $\real^4$'s can be smoothly embedded in the standard $\real^4$. After simplification \cite{BG} (see also \cite{GS}), uncountably many of these were shown to admit Stein structures in \cite{Ann}, but it was previously unknown whether any domain of holomorphy in $\complex^2$ could be an exotic $\real^4$.

\begin{proof}
 Let $R$ be a Stein exotic $\real^4$ from \cite{Ann}. This smoothly embeds in $\real^4$, hence in an open ball $V$ in $\complex^2$, preserving orientation. Since $R$ is contractible, it has only one almost-complex structure, up to homotopy, compatible with the given orientation. Theorem~\ref{smooth} isotopes the embedding in the complex surface $V$ until its image is Stein. While the resulting embedding need not be holomorphic, it is, by definition, a diffeomorphism to its image, which is the required domain of holomorphy. Clearly, we can realize any of the uncountably many diffeomorphism types of exotic $\real^4$'s from \cite{Ann} this way.
\end{proof}

Since most of our applications will involve embeddings of compact manifolds with boundary, we will restate the main principle in the next section, in a form more directly related to the boundary manifolds (Theorem~\ref{main-}). A Stein manifold can equivalently be defined \cite{Gr} as a complex manifold $V$ that admits an {\em exhausting plurisubharmonic function} $\varphi$, which we can assume is a Morse function whose indices are necessarily $\le \dim_\C V$. ``Exhausting'' means the function is proper and bounded below, so without loss of generality it maps to $[0,\infty)$. Plurisubharmonicity (by which we actually mean {\em strict} plurisubharmonicity) is essentially characterized by {\em pseudoconvexity} (actually {\em strict} pseudoconvexity) of the level sets, oriented as the boundaries of the sublevel sets $\varphi^{-1}[0,a]$. This, in turn, implies that the level sets are contact manifolds (positive with respect to the boundary orientation). (We expand on this later; see also \cite{CE}, \cite{OS}.) We use the term {\em Stein domain} for a complex manifold $W$ realizable as the sublevel set of a regular value of such a function on a Stein manifold. Such a $W$ is necessarily compact with pseudoconvex boundary and Stein interior, and the Morse function induces a handlebody structure on it whose handles are nicely compatible with the contact structures and have indices $\le \dim_\C W$. If a Stein manifold admits an exhausting plurisubharmonic Morse function with only finitely many critical points, then after deformation it is the interior of some Stein domain. In complex dimension 2, the handle structure of a Stein domain can be encoded in a {\em Legendrian link diagram}. This does not completely capture the biholomorphism type of the Stein domain, but does capture its diffeomorphism type and almost-complex structure, as well as the contact structures of the level sets. Eliashberg's Theorem in this dimension states that every such Legendrian diagram comes from a Stein domain. (See \cite{Ann} for more details and applications.) Most of our applications involve embedding Stein domains, so we will be finding Stein open subsets that (unlike the above exotic $\R^4$'s) are cut out of complex surfaces by smooth, compact, pseudoconvex 3-manifolds. Theorem~\ref{main-} allows this by expressing the main principle in terms of Stein handlebodies, so that we can look for an embedded Stein domain with a prespecified Legendrian link diagram. Focusing on the boundary, we can look for pseudoconvex embeddings of a suitable prespecified contact 3-manifold. Theorem~\ref{main-} is a simplification of Theorem~\ref{main}, which allows infinite, relative handlebodies. The full generality will be needed in \cite{steintop}, and infinite topology is already needed for Theorem~\ref{smooth}, in particular for exotic $\R^4$'s.

After stating the main principle in terms of handlebodies, we move on to applications. Section~\ref{Apps} discusses embedded Stein domains that have the homotopy type of a point or 2-sphere. We easily construct embeddings of contractible Stein domains in $\C^2$ (Example~\ref{hsphere}). This leads to infinitely many homology 3-spheres with pseudoconvex embeddings in $\C^2$, as boundaries of contractible domains of holomorphy. In contrast, we conjecture that no Brieskorn homology sphere (with either orientation) admits a pseudoconvex embedding in $\C^2$. We exhibit (Corollary~\ref{Forst}, Figure~\ref{htpyS2}) a Stein domain homotopy equivalent to a 2-sphere, holomorphically embedded in $\C^2$, contradicting a conjecture of Forstneri{\v c} \cite{Fo1}. (Noncompact counterexamples homeomorphic to $S^2\times\R^2$ were exhibited in \cite{JSG}, but these had infinite topology, so were not bounded by smooth 3-manifolds.) We then investigate when a handlebody $H$ consisting of a single 0- and 2-handle embeds as a Stein handlebody in a (minimal) rational ruled surface $S$. An embedding homologous to some section exists if and only if the intersection forms of $H$ and $S$ have the same parity and the attaching circle $K$ of the 2-handle is isotopic to a Legendrian knot with the correct values of $tb$ and $r$ (Corollary~\ref{Hirz}). The choice of complex structure on $S$ is irrelevant. We immediately see (Corollary~\ref{cave}) that if the mirror of $K$ is isotopic to a suitable Legendrian knot, then $H$ admits a complex structure with {\em pseudoconcave} boundary --- so its boundary inherits a negative contact structure (relative to the boundary orientation of the complex surface). We call such a compact, complex surface a {\em pseudoconcave (holomorphic) filling} of its boundary. Every contact 3-manifold has a concave {\em symplectic} filling \cite{Ecap}, \cite{Et}, and if it bounds a Stein domain it has a pseudoconcave holomorphic filling obtained by projectivizing and desingularizing the given Stein surface in $\C^N$. However, the present paper gives pseudoconcave fillings with more directly controlled topology. Corollary~\ref{cave}, for example, shows that if the mirror of $K\subset S^3$ is isotopic to a Legendrian knot with $tb\ge -1$, then any sufficiently large integer surgery on $K$ yields a 3-manifold with a pseudoconcave filling homotopy equivalent to a 2-sphere. Pseudoconcave fillings and embeddings are a recurring theme of this paper, as a simple testing ground for the new technology. We find pseudoconcave contractible manifolds (Section~\ref{Cork}), and pseudoconcave complex structures on manifolds of the form $I\times M^3$ (so both boundary components are pseudoconcave in the boundary orientation determined by the complex structure). Specifically, we find examples of the latter where $M$ is a circle bundle over a surface (following Corollary~\ref{cave}) and where $M$ is a Brieskorn homology sphere (Theorem~\ref{IxSig}). For comparison, $I\times M$ can never admit a pseudoconvex complex structure since it is not homotopy equivalent to a 2-complex, although it is symplectic with boundary weakly convex (or concave) whenever $M\ne S^1\times S^2$ supports a taut $C^2$-foliation \cite{ET}.  Also note that pseudoconcave fillings cannot exist inside a Stein surface (e.g.\ Proposition~\ref{tight}(c)).

In Section~\ref{Brieskorn}, we focus on Brieskorn homology 3-spheres. We show (Theorem~\ref{brieskorn}) that with two exceptions, every Brieskorn sphere $\Sigma(p,q,pq\pm 1)$ (suitably oriented) has a pseudoconvex embedding in every nonspin rational ruled surface, splitting the homology with $b_2=1$ on each side. It follows (Corollary~\ref{briesfill}) that each of these 3-manifolds (suitably oriented) has a pseudoconcave filling homotopy equivalent to $S^2$. The resulting contact structures are usually not homotopic (as plane fields) to the ones arising from the description of Brieskorn spheres as links of algebraic singularities (Proposition~\ref{theta} and following), so it seems unlikely that these embeddings can be constructed algebrogeometrically. We then consider (Theorem~\ref{IxSig} and Corollary~\ref{milnordef}) the family $\Sigma(p,q,npq-1)$. Necessarily excluding $\Sigma(2,3,5)$, we construct pseudoconcave fillings for both orientations, while showing that the 4-manifold $I\times\Sigma(p,q,npq-1)$ admits a complex structure with pseudoconcave boundary. The bottom boundary, with its orientation as the link of a singularity (opposite the boundary orientation from the product), then inherits a positive contact structure, which is the one coming from the singularity. We also see that the corresponding Milnor fiber can be deformed through complex structures, rel an arbitrarily large compact subset, to be pseudoconcave at infinity, rather than pseudoconvex. These results are obtained by compactifying the Milnor fiber in a way generalizing the description of an elliptic surface as a Milnor fiber union a nucleus, then applying Theorem~\ref{main-} to the generalized nucleus. As a byproduct, we obtain a simple polynomial expression for the signature of the $(p,q,npq-1)$ Milnor fiber, namely $-n(p^2-1)(q^2-1)/3$ (Corollary~\ref{sigma}). (More general formulas for signatures of Milnor fibers can be found in the literature, e.g.\ \cite{B}, \cite{Ne}, but these typically involve more complicated expressions such as Dedekind sums or enumeration of lattice points.)

Section~\ref{Cork} shows how various results in the literature can be sharpened in the presence of an ambient complex structure. Akbulut and Mayveyev \cite{AM} showed that a closed, orientable 4-manifold $X$ can always be split into two oppositely oriented Stein domains, glued along their common boundary. The algebraic topology is well controlled, so in the simply connected case, one domain can be taken to be contractible. We show that if $X$ is a complex surface, then the positively oriented piece can be assumed to be holomorphically embedded (Theorem~\ref{AMpsc}). Akbulut and Matveyev showed that their splitting applies to {\em corks}: compact, contractible submanifolds with boundary that can be cut out and reglued to change the smooth structure. (See Section~\ref{Cork} for further discussion.) When one or both of the resulting homeomorphic pair of closed manifolds are complex surfaces, we show (Theorem~\ref{cork}) that the corks can be taken either pseudoconvex or pseudoconcave. In either case, when both manifolds are complex, we can assume the pseudoconvex sides of the splittings are given by the same Legendrian diagram, up to a small correction if their Chern classes differ. This gives an application of controlling the handlebody structure as allowed by Theorem~\ref{main-}. For a second example of sharpening the literature, we consider the paper \cite{AY}, in which Akbulut and Yasui study several applications of cork twists. Of the most interest for the present paper, they showed (see Theorem~\ref{AY}) that under broad hypotheses, a compact 4-manifold (with boundary) embedded in another can be slightly modified to give arbitrarily many embeddings of a fixed manifold in the other, that are pairwise homeomorphic but not diffeomorphic. We show (Theorem~\ref{AYcave}) that when the ambient manifold is complex, the smoothly knotted submanifolds can all be assumed to be pseudoconcave. This section was inspired by Akbulut reminding the author about \cite{AM} and its potential relevance to the present paper.

After a bit of additional background in Section~\ref{Back}, we deal with the main principle in full generality in Section~\ref{Smooth}. In order to deal with infinite and relative Stein handlebodies, we must expand Stein theory to include noncompact complex manifolds with boundary. We call a complex manifold with boundary a {\em Stein shard} if it is cut out of a Stein manifold by a pseudoconvex hypersurface (not necessarily compact) that becomes its boundary (Definition~\ref{shard}). Equivalently, a Stein shard is a complex manifold with pseudoconvex boundary, admitting an exhausting plurisubharmonic function and holomorphically embedding in some open complex manifold of the same dimension (Corollary~\ref{existspsh}). A Stein manifold is obviously the same as a Stein shard without boundary, and in complex dimension $>1$ a Stein domain is the same as a Stein shard with compact boundary that intersects each component nontrivially (Proposition~\ref{tight}(a)). As with Stein domains, the interior of a Stein shard is a Stein manifold (Proposition~\ref{int}), and in complex dimension 2, the boundary is a tight contact manifold (Proposition~\ref{tight}(b)). We define relative Stein handlebodies built on any Stein shard $W$ in such a way that every subhandlebody is again a Stein shard (Definition~\ref{SHB}). These are sufficiently general that every Stein manifold, up to suitable deformation, is the interior of a possibly infinite Stein handlebody with $W=\emptyset$ (Corollary~\ref{all} in complex dimension 2, Corollary~5.2 of \cite{yfest} otherwise, cf.\ Proposition~\ref{SteinHB}). We state and prove the main principle in this generality (Theorem~\ref{main}), which will be needed in \cite{steintop}. Theorems~\ref{smooth} and \ref{main-} follow.

Our notion of a Stein handlebody also seems useful for finite handlebodies with $W=\emptyset$. It is sometimes natural to consider those Stein domains (or manifolds) that arise from Eliashberg's construction. Unfortunately, this notion is rather vague, since the biholomorphism type of a Stein domain changes under small deformations. If we have actually built the handlebody by Eliashberg's method, then the condition is obviously satisfied, but if it arises in a different way, even by a minor variation of the method, the situation is unclear. Instead, we have defined Stein handlebodies using the main consequences of the method, to obtain a more precise and a priori larger collection of Stein domains that in complex dimension~2 are associated to Legendrian link diagrams in the manner arising from Eliashberg's construction.

This paper involves the interplay between manifolds with and without boundary. Our convention is that manifolds are assumed not to have boundary unless otherwise specified. Stein domains and shards can have boundary by definition, as do the closed interval $I=[0,1]$ and disks of any dimension. Thus, handles are compact, and handlebodies have boundary coming from the co-attaching regions. Complex manifolds with boundary involve a subtlety: Their usual definition allows coordinate charts such as $z+\exp({-z^{-\frac12}})$ near 0 on the closed right half-space of $\C$, which is a diffeomorphism onto its image, that is holomorphic (satisfying the Cauchy-Riemann equations) everywhere, but is analytic at $z$ (represented by a power series) only when $z\ne 0$. In higher dimensions, a complex manifold with boundary charts of this sort need not admit a holomorphic embedding into any open complex manifold of the same dimension. However, the complex manifolds with boundary in this paper are always given to be cut out of open complex manifolds by smooth hypersurfaces. Similarly, the contact structures constructed on 3-manifolds in this paper are always tight --- in fact, Stein fillable for some orientation on the 3-manifold (cf.\ Section~\ref{Back}), and the pseudoconcave fillings all embed holomorphically in closed complex manifolds so that the closed complement is a Stein domain.

The author wishes to thank Yasha Eliashberg for many helpful discussions.


\section{Embedding Stein handlebodies} \label{basics}

We begin with a quick sketch of Eliashberg's method for constructing Stein manifolds \cite{E}, \cite{CE}. Suppose $M$ is an oriented real hypersurface (real codimension 1) in a complex $n$-manifold $X$ with  (almost) complex structure $J\co TX\to TX$. Then $M$ inherits a hyperplane field $\xi=TM\cap JTM$ of maximal complex subspaces of the tangent spaces. The condition that $M$ is pseudoconvex (or equivalently, $-M$ is pseudoconcave) implies, and when $n=2$ is equivalent to, the condition that $\xi$ is a positive contact structure on $M$ (negative contact structure on $-M$). This means that $\xi$ is the kernel of some 1-form $\alpha\in\Omega^1(M)$ for which $\alpha\wedge d\alpha\wedge\dots\wedge d\alpha$ is a positive volume form on $M$ (negative volume form on $-M$). Now suppose that $M$ is the boundary (necessarily pseudoconvex) of a Stein domain $W\subset X$, and that $D$ is a real-analytically embedded $k$-disk in $X$ with $D\cap W=\partial D\subset M$. Suppose that $D$ is totally real (i.e.\ the tangent bundle $TD$ contains no complex lines) and intersects $M$ $J$-orthogonally, i.e., transversely with $TD\subset JTM$ along $\partial D$. Eliashberg's main lemma asserts that $D$ is the core of a $k$-handle $h\subset X$ for which $W\cup h$ is again a Stein domain. Note that $J$-orthogonality implies that $T\partial D\subset\xi$. A submanifold of $M$ whose tangent spaces all lie in $\xi$ is called {\em isotropic}, or {\em Legendrian} if $k=n$. If $W$ is instead an abstractly given Stein domain, one can attach a standard holomorphic $k$-handle $D^k\times iD^k_\epsilon\times D^{2n-2k}_\epsilon\subset \R^k\times i\R^k\times \C^{n-k}\subset\C^n$ along a preassigned isotropic $(k-1)$-sphere to obtain a complex manifold with boundary, in which Eliashberg's lemma applies. Replacing the standard handle by the thinner one inside it given by the lemma, we again obtain a Stein domain $W\cup h$. Thus, in either the ambient or abstract setting, we have a method for inductively constructing Stein domains as handlebodies. In either setting, given an arbitrary almost-complex handlebody with all indices $\le n$, where the almost-complex structure in the ambient case is inherited from $X$, there is ultimately only one obstruction to making it into a Stein domain: When $n=k=2$, a smoothly embedded disk $D\subset X-\int W$ cannot always be isotoped to a totally real disk $J$-orthogonal to $\partial W$. This explains the extra complications of the theory when $n=2$.

When $n=2$, we start with a 4-dimensional, compact, almost-complex handlebody $(H,J)$ whose handles have indices $\le 2$, and wish to inductively transform it into a Stein domain. Over the union $H_1$ of 0- and 1-handles, the general theory applies with no difficulty, and we canonically obtain a contact structure on $\partial H_1$, which is a connected sum of copies of $S^1\times S^2$. The attaching circles of the 2-handles form a link in this contact 3-manifold. We can assume this is Legendrian, since every link in a contact 3-manifold is isotopic to a Legendrian link. There is a standard way to represent Legendrian links in  $\partial H_1$ by diagrams \cite{Ann}, \cite{GS}, e.g.\ Figure~\ref{htpyS2} below. Each oriented Legendrian circle $K$ then has two integer-valued invariants, the {\em Thurston-Bennequin invariant} $tb(K)$ and {\em rotation number} $r(K)$. These can be read off of the diagram by counting cusps and crossings. (The invariant $tb(K)$ is the signed number of self-crossings minus the number of left cusps, and the rotation number is half the signed number of cusps, where the sign of a cusp depends on the orientation of the knot, positive when the cusp is traversed downward.) The invariant $tb(K)$ can be decreased arbitrarily by adding zig-zags to the diagram, each of which can either increase or decrease $r(K)$ by 1, but $tb(K)$ cannot always be increased by isotopy. The obstruction to extending a Stein structure over a 2-handle with attaching circle $K$ is that for the core to be totally real, the 2-handle framing coefficient must equal $tb(K)-1$ and the Chern number of $J$ on the (oriented) core, relative to the standard complex framing on $H_1$ (determined by the diagram), must be $r(K)$. It follows that any Legendrian link diagram represents a handlebody realizable as a Stein domain, where the framing on each 2-handle is given by $tb(K)-1$, and the Chern class is given by a cocycle whose value on each oriented 2-handle is the rotation number of its attaching circle. We use the term {\em Stein handlebody} to include any Stein domain exhibited as a handlebody built by Eliashberg's method. We define the term carefully in Section~\ref{Smooth}, in all dimensions and allowing infinite and relative handlebodies built on a Stein shard, but for now we focus on the compact, absolute, 4-dimensional case. For every Stein domain of real dimension 4, every generic plurisubharmonic function that is constant on the boundary determines a Legendrian link diagram (up to Legendrian isotopy), and the original complex structure deforms to one built as a Stein handlebody from that diagram.

We can now state our main principle in the language of Stein handlebodies. The following is a simplified version of Theorem~\ref{main} (cf.\ also \cite{CE} Theorem~13.8).

\begin{thm}\label{main-}
Let $(H,J)$ be a compact, 4-dimensional Stein handlebody with a smooth embedding into a complex surface $X$. Suppose that the complex structure on $H$ induced by the embedding is homotopic (through almost-complex structures) to  $J$. Then after a smooth ambient isotopy of the embedding, the induced complex structure $J'$ on $H$ makes it a holomorphically embedded Stein handlebody with the same Legendrian diagram as $(H,J)$. The two Stein structures determine the same contact structure on $\partial H$.
\end{thm}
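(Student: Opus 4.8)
The plan is to reduce the ambient problem to Eliashberg's abstract handle-by-handle construction, carried out \emph{ambiently}: given the embedding $e\co H\hookrightarrow X$, we build a Stein structure on the image one handle at a time, isotoping $e$ at each stage so that the attaching data of the next handle sits in the position required by Eliashberg's main lemma for the ambient complex structure $J_X$. Fix a handle decomposition of $H$ realizing the given Stein handlebody structure on $(H,J)$, with nested subhandlebodies $H_1\subset W_1\subset\cdots\subset W_m=H$, where $H_1$ is the union of the $0$- and $1$-handles and each $W_k$ is obtained from $W_{k-1}$ by attaching one $2$-handle. The inductive assertion is that, after a smooth ambient isotopy of $e$, the image $e(W_k)$ is a Stein handlebody built by Eliashberg's method whose Legendrian diagram coincides with that of $(W_k,J|_{W_k})$.

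For the base case, an orientation-preserving smooth embedding of $D^4$ into a connected manifold is unique up to ambient isotopy, so we may take $e$ of the $0$-handle to be a round strictly pseudoconvex ball in a holomorphic chart; each $1$-handle is then produced by Eliashberg's lemma, its core arc being automatically totally real (a $1$-dimensional subspace contains no complex line) and rendered $J_X$-orthogonal to the boundary by a small isotopy near its endpoints. The induced contact structure on $\partial e(H_1)$, a connected sum of copies of $S^1\times S^2$, is forced to be the unique tight one, hence agrees (up to the canonical identification) with the one in the abstract construction.

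For the inductive step, assume $e(W_{k-1})$ is a Stein handlebody with the same diagram as $(W_{k-1},J|_{W_{k-1}})$. Since the Legendrian diagram of a Stein handlebody determines the contact structure on its boundary, up to a contactomorphism isotopic to the canonical identification of underlying manifolds, $\partial e(W_{k-1})$ with its ambient contact structure is contactomorphic to the corresponding contact manifold of the abstract construction; transporting the abstract Legendrian attaching knot of the next $2$-handle through this contactomorphism and then isotoping, we may assume that the attaching circle $K$ of the next $2$-handle of $H$, which carries a fixed framing $f_k$, is a Legendrian knot with $tb(K)=f_k+1$ and with the rotation number $r(K)=\rho_k$ prescribed by $(H,J)$. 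Matching the rotation number is exactly where the hypothesis $e^*J_X\simeq J$ enters: $\rho_k$ records the value of $c_1$ on the core of the $2$-handle relative to the complex trivialization over $H_1$, so $c_1(e^*J_X)=c_1(J)$ is what makes this possible, and the conclusion is stable under ambient isotopy of $e$ because isotoping an embedding does not change the homotopy class of the induced almost-complex structure on $H$.

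It remains to isotope the core disk $D$ of the $2$-handle $e(h_k)$, rel $\partial D=K$ and keeping $D$ in $X-\int e(W_{k-1})$, so that $D$ becomes totally real and meets $\partial e(W_{k-1})$ $J_X$-orthogonally along $K$; Eliashberg's main lemma then promotes a thin regular neighborhood of $D$ to a $2$-handle $h_k'$ for which $e(W_{k-1})\cup h_k'$ is Stein, and a regular-neighborhood isotopy carries the remaining handles of $H$ into place, advancing the induction. \textbf{This last step is the crux, and the only genuinely new ingredient beyond the abstract theory,} since abstractly the $2$-handle is built totally real from the outset. A generic $D$ has finitely many complex tangencies, of elliptic and hyperbolic type; these are removed by cancelling elliptic--hyperbolic pairs inside $X-\int e(W_{k-1})$, and the residual count is a topological invariant which, by the complex-point calculus of Lai, Bishop and Eliashberg (cf.\ \cite{Ann}), vanishes because the $2$-handle framing is $tb(K)-1=f_k$; the Chern number of the resulting Stein structure on the totally real core is then $r(K)=\rho_k$, as needed, and a final small isotopy near $K$ restores $J_X$-orthogonality. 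After all $m$ steps $e(H)$ is a holomorphically embedded Stein handlebody with the same Legendrian diagram as $(H,J)$, so the induced structure $J'$ is homotopic to $J$; and since that diagram determines the contact structure on the boundary, the two Stein structures induce the same contact structure on $\partial H$.
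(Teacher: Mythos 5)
Your overall strategy mirrors the paper's: proceed handle-by-handle, keep each $e(W_k)$ a Stein handlebody via Eliashberg's ambient lemma, and verify the Eliashberg--Harlamov reality criterion on the core of each $2$-handle. (The paper actually proves the more general Theorem~\ref{main}, for relative, possibly infinite handlebodies, and deduces Theorem~\ref{main-} as a special case; but the underlying induction is the same.) There is, however, a genuine gap in the $2$-handle step.

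The Eliashberg--Harlamov criterion imposes \emph{two} independent vanishing conditions on the core disk $D$: the normal-Euler-class condition $e(\nu D)+\chi(D)=0$, and the relative Chern-class condition $\langle c_1(TX,\tau,n),D\rangle=0$. You verify the first (it is exactly $f_k=tb(K)-1$, which you arrange by matching $tb$), but you do not actually verify the second: the phrase ``the Chern number of the resulting Stein structure on the totally real core is then $r(K)=\rho_k$, as needed'' treats it as an automatic consequence rather than as the second obstruction that must be checked. What you must show is that the \emph{ambient} structure $e^*J_X$, evaluated on the core of $h_k$ relative to the canonical totally-real framing along $K$ (which is determined by the ambient contact structure on $\partial e(W_{k-1})$), has relative Chern number $\rho_k$. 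Deducing this from the hypothesis that $e^*J_X$ and $J$ are globally homotopic is not immediate, because a generic homotopy $J_t$ does not keep the intermediate plane fields $\xi_t$ on $\partial H_1$ tangent to $K$, so the boundary framing $J_t\tau$ is not controlled along the homotopy. This is precisely the issue the paper's proof of Theorem~\ref{main} handles by strengthening the induction hypothesis to: ``$f^*J_X$ is homotopic to $J_H$ on $H-\int H_{k-1}$ \emph{through almost-complex structures inducing a fixed contact structure on} $\partial H_{k-1}$.'' Establishing that hypothesis after attaching $0$- and $1$-handles requires nontrivial bookkeeping --- in the paper, $\pi_2(S^2)$ is used over $0$-handle boundary balls, and the even powers of the contactomorphism $\psi$ (built from $\pi_1(\SO(3))=\Z/2$) are used to kill the residual $\pi_2$-obstruction over $1$-handles, after checking that it is automatically even via a relative $w_2$ computation. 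Your proof omits this step entirely, and without it the claim that the relative Chern number on the $2$-handle core comes out right is unjustified (indeed it can fail if the homotopy is not so normalized, especially when $H$ has $1$-handles, where $H^2(H;\Z)$ can have $2$-torsion).

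A secondary, smaller imprecision: you assert that the ambient contact structure on $\partial e(H_1)$ ``agrees (up to the canonical identification)'' with the model. Uniqueness of the tight structure on $\#^k(S^1\times S^2)$ gives \emph{some} contactomorphism, but not that it is compatible with the given global homotopy of almost-complex structures --- again, exactly what the paper's strengthened induction and $\psi^{2m}$-adjustments are designed to arrange.
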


\noindent Recall that an {\em ambient} isotopy is obtained by composing the embedding with an isotopy of $\id_X$ through diffeomorphisms of $X$. In particular, the diffeomorphism type of the complement of $H$ is preserved. (The Isotopy Extension Theorem asserts that every smooth isotopy of a compact submanifold extends to such an isotopy with compact support.) Theorem~\ref{main-} says that to realize a compact 4-manifold (with boundary) as a holomorphically embedded Stein domain in a complex surface, it is enough to realize it abstractly as a Stein domain and then find a smooth embedding of it that respects the homotopy class of the complex structure. In general, if $H^2(H;\zed)$ has no 2-torsion, this homotopy class is preserved if and only if the orientation and the Chern class $c_1$ are preserved. This is because the space of complex vector space structures on $\R^4$ determining its usual orientation has the homotopy type of $S^2$, so the first uniqueness obstruction is in  $H^2(H;\zed)$ --- in fact, it is half the difference of the corresponding Chern classes. Since a Stein surface has the homotopy type of a 2-complex, this is the only obstruction (and a homotopy class of almost-complex structures is essentially the same as a spin$^\C$ structure). Most of our examples have no torsion, but Section~\ref{Cork} shows how to apply the theorem when 2-torsion may be present. For another such application, we have:

\begin{cor} If a Stein surface or Stein domain embeds symplectically  into a K\"ahler surface, then the embedding is isotopic to an  embedding (typically not symplectic) whose image is Stein.
\end{cor}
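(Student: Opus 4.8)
The plan is to deduce this from Theorem~\ref{smooth} (for a Stein surface) and from Theorem~\ref{main-} (for a Stein domain): the content is that a symplectic embedding into a K\"ahler surface automatically meets the homotopy-of-almost-complex-structures hypothesis of those results. Fix notation: let $\omega_W$ be the symplectic form carried by the Stein surface (or domain) $W$, arising from an exhausting plurisubharmonic function for its complex structure $J_W$ and hence compatible with $J_W$, so that $\omega_W(\cdot,J_W\cdot)$ is a metric. Let $\iota\co W\to(X,\omega)$ be the given symplectic embedding, $\iota^*\omega=\omega_W$, and let $J_X$ be a complex structure on the K\"ahler surface $X$ compatible with its K\"ahler form $\omega$. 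Pulling $J_X$ back through the diffeomorphism $\iota$ onto its image gives the induced complex structure $J'$ on $W$.

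First I would observe that $J'$ is again compatible with $\omega_W$: since $\iota$ carries $\omega_W$ to $\omega$ and $J'$ to $J_X$, naturality gives $\omega_W(\cdot,J'\cdot)=\iota^*\bigl(\omega(\cdot,J_X\cdot)\bigr)$, again a metric. The space of almost-complex structures on a manifold compatible with a fixed symplectic form is contractible, so $J_W$ and $J'$ lie in the same component; that is, they are homotopic through (compatible, hence through all) almost-complex structures on $W$. This is the only point at which the symplectic hypothesis enters. Note that it produces the homotopy directly, bypassing any comparison of Chern classes and hence any issue with $2$-torsion in $H^2(W;\zed)$.

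It remains to cite the main principle. If $W$ is a Stein surface, apply Theorem~\ref{smooth} to the open subset $\iota(W)\subset X$: its induced complex structure corresponds under $\iota$ to $J'$, which is homotopic to the Stein structure $J_W$, so $\iota$ is smoothly isotopic to an embedding with Stein image. If $W$ is a Stein domain, first deform $J_W$ so that $W$ is presented as a Stein handlebody for some generic plurisubharmonic function, as recalled before Theorem~\ref{main-}; this deformation does not change the homotopy class of $J_W$, so $J'$ is still homotopic to it, and Theorem~\ref{main-} supplies an ambient isotopy after which $\iota(W)$ is a holomorphically embedded Stein handlebody. In neither case is the final embedding required to be symplectic, since the isotopy need not preserve $\omega$. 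I do not anticipate a serious obstacle: essentially everything is a direct invocation of the two theorems, and the one substantive step is the compatibility observation above showing that a symplectic embedding forces the two complex structures into the same homotopy class.
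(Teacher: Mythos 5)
Your proof is correct and follows essentially the same route as the paper: the symplectic hypothesis forces the pulled-back complex structure and the given Stein structure into the same homotopy class (via contractibility of the space of compatible almost-complex structures), after which Theorem~\ref{smooth} or Theorem~\ref{main-} applies directly. The paper's proof is the same two-line observation; your write-up merely supplies the standard details.
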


\begin{proof} A symplectic embedding (of the K\"ahler structure induced by some plurisubharmonic function) preserves the homotopy class of the almost-complex structure. Apply Theorem~\ref{smooth} or \ref{main-}, respectively, to the two cases.
\end{proof}

Note that the converse is false. If $C\subset X$ is a complex curve in a K\"ahler surface and $C\cdot C\le -\chi(C)$, then by Theorem~\ref{main-}, a tubular neighborhood of $C$ is isotopic to a Stein domain $U\subset X$ (cf.\ Corollary~\ref{Hirz} and subsequent discussion). However, the K\"ahler form $\omega$ restricted to $U$ is not induced by the Stein domain structure, since it is not exact. (Its value on the generator of $H_2(U)$ must be $\langle\omega,C\rangle>0$.)


\section{Initial applications} \label{Apps}

To warm up, we consider Stein handlebodies with the homotopy type of a point or a 2-sphere. The first case, contractible Stein domains, follows by generalizing our discussion of exotic $\R^4$'s in the Introduction (Corollary~\ref{R4}), and leads to a discussion of pseudoconvex embeddings of homology 3-spheres.

\begin{cor} \label{contractible}
If $V$ is a Stein surface or Stein domain, and $H^2(V;\Z)=0$, then every orientation-preserving embedding of $V$ into a complex surface is isotopic to one with Stein image.
\end{cor}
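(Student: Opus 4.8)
The plan is to reduce this directly to Theorem~\ref{smooth} when $V$ is a Stein surface and to Theorem~\ref{main-} when $V$ is a Stein domain; the entire content then lies in checking that the hypothesis on homotopy classes of almost-complex structures is automatic once $H^2(V;\Z)=0$.

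First I would recall that a Stein surface, and likewise the interior of a Stein domain, carries an exhausting plurisubharmonic Morse function with all indices $\le 2$, so $V$ is homotopy equivalent to a CW complex of dimension $\le 2$; in the Stein domain case $V$ deformation retracts onto its interior, so the same holds there. Combined with the hypothesis, this gives $H^2(V;\Z)=0$ and $H^k(V;G)=0$ for all $k\ge 3$ and all coefficient groups $G$. Next, as noted after Theorem~\ref{main-}, the fiberwise space of complex vector-space structures on $\R^4$ inducing its standard orientation has the homotopy type of $S^2$. Hence, for two almost-complex structures on $V$ inducing the same orientation, the successive obstructions to homotoping one to the other lie in $H^k(V;\pi_k(S^2))$: on the $1$-skeleton the obstruction vanishes since $\pi_1(S^2)=0$, on the $2$-skeleton it lies in $H^2(V;\pi_2(S^2))=H^2(V;\Z)=0$, and for $k\ge 3$ we have $H^k(V;\pi_k(S^2))=0$. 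So any two almost-complex structures on $V$ compatible with a fixed orientation are homotopic through such structures.

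Now let $f\co V\hookrightarrow X$ be an orientation-preserving embedding into a complex surface $X$, and let $J_0$ be the given Stein structure on $V$. The complex structure of $X$ pulls back under $f$ to a complex structure on $V$ which, since $f$ is orientation-preserving, induces the same orientation as $J_0$; by the previous paragraph it is therefore homotopic to $J_0$ through almost-complex structures on $V$. If $V$ is a Stein surface, Theorem~\ref{smooth} isotopes $f$ to an embedding with Stein image. If $V$ is a Stein domain, first deform $J_0$ so that $V$ becomes a compact $4$-dimensional Stein handlebody, as recalled in Section~\ref{basics}; this deformation does not change the homotopy class of $J_0$, so the complex structure induced by $f$ is still homotopic to it, and Theorem~\ref{main-} isotopes $f$ to an embedding whose image is a Stein handlebody, hence a Stein open subset cut out by a smooth pseudoconvex $3$-manifold.

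The only step needing care is the obstruction computation, whose essential input is the structural fact --- recurring throughout the paper --- that a Stein manifold has the homotopy type of a $2$-complex, so that the sole potential obstruction lives in $H^2(V;\Z)$, which vanishes by hypothesis. There is no genuinely hard step.
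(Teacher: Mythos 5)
Your proof is correct and takes essentially the same approach as the paper: the paper's entire proof is the observation that $H^2(V;\Z)=0$, together with $V$ having the homotopy type of a 2-complex, forces a unique homotopy class of almost-complex structures compatible with the orientation, after which Theorem~\ref{smooth} or \ref{main-} applies; you have simply unwound the obstruction-theoretic reasoning that the paper summarizes in one sentence (and which is spelled out in the remarks following Theorem~\ref{main-}).
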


\begin{proof}
The cohomological condition implies $V$ has a unique homotopy class of almost-complex structures respecting the given orientation. Apply Theorem~\ref{smooth} or \ref{main-} as before.
\end{proof}

\begin{example} \label{hsphere}
It is now easy to give many examples of homology 3-spheres that embed in $\complex^2$ as pseudoconvex boundaries of contractible Stein domains. Let $L \subset M$ be a link in a connected sum of copies of $S^1 \times S^2$. Suppose that $L$ is homotopic (as a map of a disjoint union of circles) to the obvious basis for $\pi_1 (M)$. (More generally, any Andrews-Curtis trivial presentation of the trivial group will work.) After an isotopy, we can assume that $L$ is Legendrian in the standard contact structure on $M$ --- In fact, we typically obtain an infinite family of different Legendrian links this way, by adding zig-zags to vary $tb$. Any such Legendrian link yields a contractible Stein handlebody $H$ that smoothly embeds in $\complex^2$. In fact, $I \times H$ is a 5-ball since its 2-handles become unlinked in this dimension, so its boundary exhibits $H$ embedded  in $\partial B^5=S^4 = \complex^2 \cup \{ \infty \}$. Since the boundary of a compact, contractible manifold is always a homology sphere, Corollary~\ref{contractible} now gives the required embedding. For example, consider a {\em Mazur curve}, a knot in $S^1 \times S^2$ generating its 1-homology. Most Mazur curves have hyperbolic complements. For such a knot, the different choices of Legendrian representative result in infinitely many diffeomorphism types of hyperbolic homology spheres, distinguished by their volumes, with pseudoconvex embeddings in $\complex^2$.
\end{example}

The above embedding problem becomes much harder if we restrict attention to preassigned families of homology spheres. We consider Brieskorn spheres in Section~\ref{Brieskorn}. Many of these do not even embed smoothly in $\C^2$. For example, many have nontrivial Rohlin invariant, so cannot even bound a spin manifold with signature 0. However, Casson and Harer \cite{CH} produced several infinite families of Brieskorn spheres that do bound contractible manifolds, each made from a knot $K$ in $S^1 \times S^2$, and so these do embed in $\C^2$. (Specifically, \cite{CH} gives $\Sigma(p,np+\epsilon,np+2\epsilon)$ for $p$ odd and $\epsilon=\pm1$, and $\Sigma(p,np-1,np+1)$ for $p$ even and $n$ odd, plus the sporadic example $\Sigma(2,3,13)$.) The author has been unable to put Stein structures on any of these contractible manifolds with either orientation, since Legendrian representatives of $K$ seem to always have $tb(K)-1$ strictly smaller than the required framing of the 2-handle. This motivates:

\begin{conj} No Brieskorn sphere with either orientation has a pseudoconvex embedding in $\C^2$.
\end{conj}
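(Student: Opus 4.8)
The plan is to hunt for an obstruction, since the empirical evidence in the preceding discussion strongly suggests no such embedding exists. \textbf{Step 1: reduce to Stein homology balls smoothly embedded in $\C^2$.} Suppose $\Sigma=\Sigma(p,q,r)$ has a pseudoconvex embedding in $\C^2$, so it bounds a Stein domain $W$. A Stein domain is compact, hence $W$ is the closure of the \emph{bounded} component of $\C^2\setminus\Sigma$. Since $\Sigma$ is a homology $3$-sphere and $\C^2\subset S^4$, Alexander duality shows both complementary pieces are homology balls; thus $W$ is a homology ball, $H^2(W;\Z)=0$, and $c_1$ of the Stein structure vanishes. Conversely, any Stein homology ball $W$ with $\partial W=\Sigma$ embedded smoothly and orientation-preservingly in $\C^2$ yields, by Corollary~\ref{contractible}, a pseudoconvex embedding of $\Sigma$; and by the $I\times H=B^5$ trick of Example~\ref{hsphere}, the homology balls arising from Stein handlebodies automatically embed this way. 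So the conjecture is equivalent to the statement that \emph{no homology ball bounded by a Brieskorn sphere carries a Stein structure inducing the boundary orientation from $\C^2$}.

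\textbf{Step 2: dispose of the easy cases.} If $\Sigma(p,q,r)$ has nonzero Rokhlin invariant or nonzero Ozsv\'ath--Szab\'o correction term $d$, it bounds no rational homology ball at all, so is covered vacuously; this already handles $\Sigma(2,3,5)$ with both orientations (since $d(-Y)=-d(Y)$ and the Rokhlin invariant is orientation-insensitive) and infinitely many others. For the reversed Poincar\'e sphere one may alternatively cite Etnyre--Honda: $-\Sigma(2,3,5)$ supports no tight contact structure, hence no Stein filling anywhere. What remains are the genuinely hard Brieskorn spheres --- those, such as the Casson--Harer families cited above, that do bound smooth homology balls embeddable in $S^4$.

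\textbf{Step 3: the Legendrian obstruction.} A Stein homology ball built on one $0$-handle together with $1$- and $2$-handles presents $\Sigma$ as the contact boundary determined by a Legendrian diagram in the tight contact structure on a connected sum $\#^k(S^1\times S^2)$, and Eliashberg's framing condition forces each $2$-handle framing to equal $tb-1$ of its attaching circle. But the smooth isotopy class of that attaching link and the required framings are both pinned down by the topology of $W$ (in particular by $H_1(W)=0$). So one must prove that the relevant link cannot be realized Legendrianly with $tb$ one larger than the prescribed framing. The tools would be sharp upper bounds on $tb$: slice--Bennequin and adjunction inequalities valid in $S^4$ or in any homology ball, Legendrian and transverse invariants coming from Heegaard Floer or Khovanov homology, and the behaviour of the Heegaard Floer contact invariant under these $2$-handle attachments --- making precise the author's observation that $tb$ always falls ``just short.''

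\textbf{The main obstacle}, and the reason this remains only a conjecture, is twofold. First, one cannot restrict attention to the known Casson--Harer fillings: a Brieskorn sphere might bound some other, as yet unidentified, homology ball that both embeds in $\C^2$ and admits a Stein structure, and there is presently no structural classification of Stein homology balls with prescribed boundary. Second, even for the known fillings the needed $tb$ estimates must be simultaneously sharp \emph{and} uniform over infinite families, and the available Bennequin-type inequalities in $\#^k(S^1\times S^2)$ are not strong enough to close the gap. A successful proof would most plausibly combine a gauge- or Floer-theoretic nonexistence statement for Stein structures on homology balls in $S^4$ (in the spirit of the $d$-invariant constraints on Stein fillings, but made sensitive to the homology-ball hypothesis) with the Legendrian $tb$ analysis of Step~3.
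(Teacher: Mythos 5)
This statement is labeled a \emph{Conjecture} in the paper, and the paper offers no proof of it --- only the supporting observation, in the surrounding text, that a Brieskorn sphere pseudoconvexly embedded in $\C^2$ would have to bound an acyclic Stein domain with a unique homotopy class of plane field ($\theta(\xi)=-2$), together with the author's report of having failed to find Stein structures on the Casson--Harer contractible fillings. So there is no proof in the paper to compare against, and your write-up correctly recognizes this: what you have produced is a reduction and a research outline, not a proof, and you say so.

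Your Step~1 reproduces the paper's own reduction. If $\Sigma$ sits pseudoconvexly in $\C^2\subset S^4$, then by Proposition~\ref{tight}(c) it bounds a Stein domain $W$, and Alexander duality in $S^4$ forces $W$ to be acyclic with $c_1=0$, hence $\theta(\partial W)=-2$. That matches the parenthetical remark after the Conjecture. Your Step~2 is a sensible addition the paper does not spell out: Brieskorn spheres failing the $d$-invariant (or Rokhlin) obstruction to bounding a rational homology ball are vacuously covered, and $-\Sigma(2,3,5)$ is also handled by Etnyre--Honda. Step~3 locates the real difficulty exactly where the author does --- one must show the would-be attaching link never has a Legendrian representative with $tb-1$ matching the required framing --- and you honestly report that no known Bennequin-type bound closes this gap uniformly.

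One inaccuracy worth flagging: in Step~1 you assert that homology balls arising from Stein handlebodies ``automatically embed'' in $\C^2$ by the $I\times H = B^5$ trick. That trick, as Example~\ref{hsphere} is careful to say, requires the handle presentation to be Andrews--Curtis trivial; it certainly fails if $\pi_1(W)\ne 1$ (Stein homology balls need not be simply connected), and even contractible handlebodies need not have $\partial(I\times H)=S^4$. This does not damage the forward implication you actually need (embedded $\Rightarrow$ acyclic Stein filling), but it means your claimed ``equivalence'' overstates the converse: ruling out \emph{all} Stein homology ball fillings is sufficient but more than is literally required, since some such fillings may not embed in $\C^2$ at all. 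As a reduction for the purpose of obtaining an obstruction, however, passing to the stronger target is fine.
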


\noindent The Casson-Harer examples all bound Stein domains with each orientation, by \cite{Ann}~Theorem~5.4(c) (all $r'_i<-2$ or $r'_1=-2$, $r'_2,r'_3<-3$), but there are further constraints here. Such an embedded Brieskorn sphere would necessarily bound an acyclic Stein domain (note Proposition~\ref{tight}(c)) so, for example, the homotopy class of the induced contact structure (as a plane field) would be uniquely determined ($\theta(\xi)=-2$, cf.\ proof of Proposition~\ref{theta}).

We now shift attention from the homotopy type of a point to that of a 2-sphere. Forstneri{\v c} \cite{Fo1} conjectured that no domain of holomorphy in $\C^2$ could have the homotopy type of $S^2$. Counterexamples homeomorphic to $S^2\times \R^2$ were constructed in \cite{JSG}, realizing uncountably many diffeomorphism types \cite{steintop}. However, these necessarily had infinite topology, and could not even be realized up to diffeomorphism as interiors of compact manifolds. Now we require the region to be cut out by a compact, pseudoconvex 3-manifold, by restricting attention to embedded Stein domains. One cannot construct such an example using just a 0-handle and 2-handle, for if we could, the attaching curve would be a slice knot. (To see this, compactify $\C^2$ to $S^4$, then remove the interior of the 0-handle, leaving behind a 4-ball in which the knot explicitly bounds an embedded disk, namely the core of the 2-handle.) A slice disk in the 0-handle would then fit together with the core of the 2-handle to realize a homologically nontrivial sphere of square 0 in the Stein surface, contradicting gauge theory \cite{LM}. On the other hand, if we allow one 1-handle, the situation changes:

\begin{cor} \label{Forst}
There is a Stein domain holomorphically embedded in $\C^2$  that is homotopy equivalent to the 2-sphere.
\end{cor}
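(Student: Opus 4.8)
The plan is to use Theorem~\ref{main-}: we want to exhibit a compact $4$-dimensional Stein handlebody $H$ that (i) is homotopy equivalent to $S^2$, and (ii) smoothly embeds into $\C^2$ in a way that respects the homotopy class of the almost-complex structure inherited from $\C^2$. Since $\C^2$ is contractible, the induced almost-complex structure on any embedded handlebody is automatically homotopic to the unique one compatible with the orientation (the obstruction lives in $H^2(H;\Z)$ and is half the difference of Chern classes, so it suffices that $c_1(J)=0$ and the orientation is the standard one). Thus, once we produce $H$ and the embedding, Theorem~\ref{main-} finishes the job, cutting out a Stein domain in $\C^2$ by a smooth compact pseudoconvex $3$-manifold, holomorphically embedded, with the prescribed Legendrian diagram.

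The construction of $H$ should be a one-$0$-handle, one-$1$-handle, one-$2$-handle Stein handlebody. The preceding discussion already rules out the case with no $1$-handle (the attaching circle would be slice and we would contradict gauge theory), so the $1$-handle is essential. With one $1$-handle, $\partial H_1 = S^1\times S^2$ carries its standard (tight) contact structure, and we must choose a Legendrian knot $K\subset S^1\times S^2$ that (a) is null-homologous in $S^1\times S^2$, so that attaching a $2$-handle along it with framing $tb(K)-1$ produces a $4$-manifold with the homology of $S^2$ (the $2$-handle kills $\pi_1$ of the $1$-handle and the resulting $H_2$ is $\Z$, generated by a genus-something surface of self-intersection $tb(K)-1$); and (b) has $tb(K)=1$ so that the self-intersection is $0$, matching the sphere of square $0$ we expect in $\C^2$ --- actually, to get $H\simeq S^2$ we need the $2$-handle's attaching circle to bound a disk in $S^1\times S^2$ (so that the core of the $2$-handle caps off to a sphere); more precisely $K$ should be a knot in $S^1\times S^2$ that, together with the belt sphere data, yields a $2$-complex homotopy equivalent to $S^2$. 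The correct choice (this is the point of Corollary~\ref{cave} and the surrounding discussion, and appears as Figure~\ref{htpyS2} in the paper) is to take a Legendrian knot $K$ that, when the $1$-handle is absorbed, becomes a twice-through-the-$1$-handle curve bounding an obvious disk; one arranges via zig-zags that $tb=1$ and $r=0$. Then $c_1(J)$ evaluated on the core $2$-handle is $r(K)=0$, so $c_1(J)=0$ as required.

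For the smooth embedding of $H$ into $\C^2$, the cleanest route is the $5$-dimensional trick used in Example~\ref{hsphere}: show that $I\times H$ is diffeomorphic to a standard piece of $B^5$, or directly that $H$ embeds in $S^4 = \C^2\cup\{\infty\}$ with the embedded sphere-of-square-zero sitting in a ball, hence in $\C^2$ itself. Concretely, one checks that the Kirby diagram of $H$ (one $1$-handle, one $0$-framed $2$-handle on the appropriate knot) describes a $4$-manifold that embeds in $S^4$: the $1$-handle/$2$-handle pair can be slid and cancelled after crossing into dimension $5$ because the $2$-handle's attaching circle, though linking through the $1$-handle, bounds a disk once an extra dimension is available. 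Equivalently, $H$ is a regular neighborhood of a $2$-sphere that smoothly embeds in $S^4$ with trivial normal bundle (a "null-homologous" unknotted $S^2$), and any such neighborhood sits inside a $4$-ball; put that $4$-ball inside a coordinate ball in $\C^2$.

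The main obstacle will be verifying the embedding claim carefully --- i.e.\ that the specific handlebody $H$ (with its essential $1$-handle and the chosen $2$-handle) really does embed smoothly in $\C^2$ with the correct orientation, and simultaneously that the Legendrian knot $K$ realizing it can be chosen with $tb(K)=1$ and $r(K)=0$ so that the framing $tb(K)-1=0$ matches and $c_1(J)=0$. The tension is exactly the one flagged for the Casson--Harer examples: Legendrian representatives may refuse to achieve the framing the topology demands. Here, however, the $1$-handle provides enough slack (adding zig-zags freely lowers $tb$, and the relevant knot type in $S^1\times S^2$ does admit a Legendrian representative with $tb=1$), so this is checkable rather than obstructed --- but it is the crux, and it is why one $1$-handle is both necessary and sufficient. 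Once $H$ and the embedding are in hand, Theorem~\ref{main-} applies verbatim and produces the desired Stein domain in $\C^2$ homotopy equivalent to $S^2$, contradicting Forstneri\v{c}'s conjecture.
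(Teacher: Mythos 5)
Your plan to invoke Theorem~\ref{main-} and use one $1$-handle is correct in spirit, but the specific handle count is wrong, and the error is fatal: a handlebody with one $0$-handle, one $1$-handle and a \emph{single} $2$-handle can never be homotopy equivalent to $S^2$. The boundary map $H_2(H,H_1)\to H_1(H_1)\cong\Z$ is a $1\times 1$ integer matrix. If the attaching circle $K$ generates $H_1(S^1\times S^2)$, the map is an isomorphism and $H$ has the homology of a point (one can't then also have $H_2\cong\Z$). If instead $K$ is null-homologous --- as you propose, so that the $2$-handle core ``caps off to a sphere'' --- then the $1$-handle survives and $H_1(H)\cong\Z\ne 0$, so $H$ is not simply connected. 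Your write-up actually asserts both conditions at once (``null-homologous, so that \ldots the $2$-handle kills $\pi_1$''), which is a contradiction; either way, one $2$-handle cannot simultaneously kill the $1$-handle and contribute a generator to $H_2$.

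The fix is what the paper does (Figure~\ref{htpyS2}): attach \emph{two} $2$-handles along a pair of Mazur curves in $\partial H_1\cong S^1\times S^2$, each homotopic to $S^1\times\{p\}$. Now the boundary map $\Z^2\to\Z$ is $(1,1)$: the $1$-handle is killed, and the kernel $\Z$ (generated by the difference of the two $2$-handles) gives $H_2(H)\cong\Z$. Since the relations in $\pi_1$ are conjugates of $x=1$, $\pi_1(H)=1$, and $H\simeq S^2$. This also changes the Chern class bookkeeping: the generator of $H_2(H)$ is a \emph{difference} of $2$-handles, so $c_1$ evaluates to the difference of the rotation numbers, and the paper takes both Legendrian curves to have $tb=r=1$ (not $r=0$), giving $c_1=0$. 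Your embedding strategy (the $5$-dimensional / $I\times H$ trick from Example~\ref{hsphere}) is a valid alternative to the paper's more direct route of adding a cancelling $2$-handle and two $3$-handles to realize $H$ as a subhandlebody of $B^4\subset\C^2$, but it does not rescue the one-$2$-handle construction, which fails before one ever gets to the embedding step.
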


\begin{proof} Figure~\ref{htpyS2}
\begin{figure}
\centerline{\epsfbox{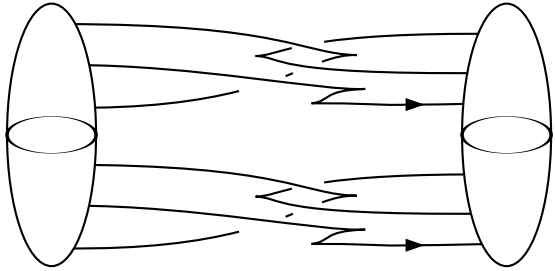}}
\caption{A Stein homotopy $S^2$ in $\C^2$}\label{htpyS2}
\end{figure}
 shows a Stein handlebody $H$ made with one 0-handle (whose boundary is the background $S^3=\R^3\cup\{\infty\}$), one 1-handle (attached at the two ellipsoids), and two 2-handles attached along a pair of oriented Mazur curves in the resulting $S^1\times S^2$ boundary. The curves are both Legendrian, with $tb=r=1$. Since the curves are both homotopic to $S^1\times \{p\}$ in $S^1\times S^2$, $H$ has the homotopy type of $S^2$. Its Chern class is 0, given by the difference of the rotation numbers since the difference of the 2-handles generates $H_2(H)$. If we ignore Stein structures, we can add a 2-handle to cancel the 1-handle, leaving a 0-framed unlink in $S^3$. Adding two 3-handles gives $B^4\subset \C^2$, so we have smoothly embedded $H$ in $\C^2$.  Since there is no 2-torsion and the embedding preserves the Chern class, it preserves the almost-complex structures up to homotopy. Theorem~\ref{main-} completes the proof.
\end{proof}

In contrast to $\C^2$, other complex surfaces do admit holomorphically embedded Stein handlebodies consisting of a 0-handle and 2-handle. We consider the (minimal) rational ruled surface $S_m$ with holomorphic sections of square $\pm m$ under the intersection pairing. Recall that such surfaces are holomorphically distinct for different values of $m=0,1,2,\dots$, but smoothly (or symplectically) the underlying $S^2$-bundles over $S^2$ are determined by the mod 2 residue of $m$. In particular, $S_m$ admits smooth (symplectic) sections of all squares congruent to $m$ mod 2.

\begin{cor}\label{Hirz}
Suppose that $H$ is obtained by attaching a 2-handle to $B^4$ along a knot $K$ with framing $n$. Fix a generator of $H_2(H;\Z)\cong\Z$ by orienting $K$. Then $H$ can be realized in $S_m$ as a Stein handlebody representing the homology class of some smooth section, if and only if $m\equiv n$ mod 2 and $K$ has a Legendrian representative  with $tb=n+1$ and $r=n+2$.
\end{cor}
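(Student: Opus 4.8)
The strategy is to separate the statement into its two directions and use Theorem~\ref{main-} to convert each into a purely topological/contact-geometric question about $K$ and $S_m$. First I would address necessity. Suppose $H$ is realized in $S_m$ as a Stein handlebody representing the class of a smooth section $s$. Since $H$ is built from a $0$-handle and a single $2$-handle, $H_2(H;\Z)\cong\Z$ is generated by the core of the $2$-handle capped with the core of the $0$-handle, and the self-intersection of this generator is the framing $n$; since it also equals $s\cdot s\equiv m$ mod $2$ (all sections of $S_m$ have square congruent to $m$), we get $m\equiv n$ mod $2$. For the Legendrian conditions: a Stein handlebody with this cell structure is, by the discussion preceding Theorem~\ref{main-}, exactly the one obtained from a Legendrian diagram of $K$ sitting in $\partial B^4=S^3$ (with its standard tight contact structure), where the $2$-handle framing is $tb(K)-1$ and the rotation number of the Legendrian knot computes the value of $c_1$ on the generating class. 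Thus $n=tb-1$, i.e. $tb=n+1$. To get $r=n+2$, I would compute $c_1$ of the ambient complex surface $S_m$ paired with the section class and compare: $\langle c_1(S_m),s\rangle$ is computed by adjunction, $\langle c_1(S_m),s\rangle = s\cdot s + 2 = n+2$ (a smooth rational section has genus $0$). Since the embedding is holomorphic, the induced complex structure on $H$ is the ambient one, so the rotation number $r$ of the Legendrian attaching knot equals $\langle c_1(S_m),\text{generator}\rangle = n+2$. This gives necessity.

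For sufficiency, assume $m\equiv n$ mod $2$ and $K$ has a Legendrian representative with $tb=n+1$, $r=n+2$. This Legendrian knot, with the framing $tb-1=n$, gives an abstract Stein handlebody $(H,J)$ (Eliashberg's Theorem, as recalled in Section~\ref{basics}), diffeomorphic to the given $H$, with $\langle c_1(J),\text{generator}\rangle = r = n+2$. I now need a smooth embedding of $H$ into $S_m$ (as the neighborhood of a smooth section) that preserves the homotopy class of the almost-complex structure, so that Theorem~\ref{main-} applies. Since $m\equiv n$ mod $2$, the surface $S_m$ contains a smooth section $\Sigma$ of square exactly $n$ (this is the standard fact recalled just before the corollary that $S_m$ has smooth sections of every square $\equiv m$ mod $2$); a tubular neighborhood $N(\Sigma)$ is a disk bundle over $S^2$ of Euler number $n$, which is diffeomorphic to $H$ (both are the $D^2$-bundle over $S^2$ with Euler number $n$). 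Fix such a diffeomorphism $H\to N(\Sigma)$; since $H$ has no $2$-torsion in $H^2$, the induced complex structure on $H$ is homotopic to $J$ iff it has the same orientation and the same value of $c_1$ on the generator. The orientation is arranged by choosing the diffeomorphism suitably. For $c_1$: the restriction of $c_1(S_m)$ to $N(\Sigma)$ evaluates on $[\Sigma]$ as $\langle c_1(S_m),[\Sigma]\rangle = [\Sigma]\cdot[\Sigma] + 2 = n+2$ by adjunction, matching $\langle c_1(J),\text{generator}\rangle=n+2$. Hence the two homotopy classes agree, and Theorem~\ref{main-} isotopes the embedding to one with Stein image, having the prescribed Legendrian diagram; the image represents $[\Sigma]$, the class of a smooth section. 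This completes sufficiency.

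The point that needs the most care is the sign/orientation bookkeeping in the $c_1$ computation and the adjunction formula, and in particular making sure the generator of $H_2(H;\Z)$ induced by orienting $K$ is matched to the section class $[\Sigma]$ with the correct sign so that $\langle c_1,\text{generator}\rangle$ really does come out to $n+2$ rather than $-(n+2)$ or $n-2$; one must check that the orientation of the core disk of the $2$-handle induced by the chosen orientation of $K$ agrees with the complex orientation coming from the ambient $J$ on $S_m$, and that the adjunction formula is being applied with the complex (not the opposite) orientation on $\Sigma$. The rest is an assembly of standard facts: Eliashberg's Theorem, the dictionary between Legendrian diagrams and $(tb-1)$-framed $2$-handles with $c_1$ read off from rotation numbers, the classification of $D^2$-bundles over $S^2$ and of smooth sections of $S_m$, and Theorem~\ref{main-} itself.
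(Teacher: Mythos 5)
The necessity direction of your argument is fine, and your reduction to applying Theorem~\ref{main-} is the right framework. But the sufficiency direction has a genuine gap at the point where you construct the smooth embedding of $H$ into $S_m$.

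You write that a tubular neighborhood $N(\Sigma)$ of a smooth section $\Sigma$ of square $n$ is ``a disk bundle over $S^2$ of Euler number $n$, which is diffeomorphic to $H$ (both are the $D^2$-bundle over $S^2$ with Euler number $n$).'' This is false for a general knot $K$. The manifold $H$ is $B^4$ with a $2$-handle attached along $K$ with framing $n$; it is a $D^2$-bundle over $S^2$ only when $K$ is the unknot. For a nontrivial $K$ (say a trefoil), $H$ has the same homotopy type and intersection form as the disk bundle but is not diffeomorphic to it (for instance their boundaries differ). So the diffeomorphism $H\to N(\Sigma)$ you want to fix does not exist, and you have not actually produced a smooth embedding of $H$ into $S_m$ to which Theorem~\ref{main-} could be applied. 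You appear to be conflating ``$H$ represents the homology class of a smooth section'' with ``$H$ is a tubular neighborhood of a smooth section'' --- the corollary only asserts the former.

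The paper constructs the embedding by a doubling trick that sidesteps this issue entirely: one looks at $\partial(I\times H)$, the double of $H$, which visibly contains a copy of $H$. Because attaching circles of $2$-handles become unknotted after crossing with $I$ (so in $5$ dimensions $K$ unknots; in Kirby calculus, add a $0$-framed meridian and slide), the double is an $S^2$-bundle over $S^2$, trivial iff $n$ is even, with the $0$-framed meridian tracking the fiber class. Thus $[H]$ pairs $1$ with the fiber, so is a section class; one then identifies the bundle with $S_m$ for any $m\equiv n\ (\mathrm{mod}\ 2)$. At that point your $c_1$-matching via adjunction and the invocation of Theorem~\ref{main-} go through exactly as you describe. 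So the missing idea is the doubling construction; the remainder of your argument assembles correctly once you have the embedding.
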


\noindent Note that changing the orientation of $K$ reverses the sign of $r$ and changes the embedding problem, since the sections of $S_m$ are canonically oriented (intersecting the holomorphic fibers positively).

\begin{proof} To construct such an embedding, we simply double $H$, i.e., look at $\partial (I\times H)$. The resulting 4-manifold is an $S^2$-bundle over $S^2$, trivial if and only if $n$ is even, since the attaching circle becomes unknotted when the dimension increases. To see this with Kirby calculus, we simply add a 0-framed meridian to $K$, which allows $K$ to be unknotted by handle slides (e.g.\ \cite{GS}). Since the meridian represents a fiber of the bundle throughout the computation, and has linking number 1 with $K$, the homology class carried by $H$ has intersection number 1 with the fiber class and so is represented by a smooth section. For any $m\equiv n$ mod 2, we can identify the bundle with $S_m$ (preserving the ambient and fiber orientations). The resulting complex structure on $H$ is determined up to homotopy by its Chern number. This must be $n+2$ by the adjunction formula, since the homology class has square $n$ and is represented by a symplectic sphere in $S_m$. We realize the same Chern number by a Stein structure on $H$ using the given Legendrian representative of $K$. (The condition on $tb$ guarantees that we get the correct manifold, and the one on $r$ guarantees the correct almost-complex structure.) Theorem~\ref{main-} now gives the required Stein handlebody.

Conversely, if $H$ is given as a Stein handlebody, then $K$ is Legendrian with $tb=n+1$, and the given embedding guarantees the other conditions.
\end{proof}

\begin{remark} The corollary applies without change if $H$ is also allowed to have 1-handles, provided that $K$ is nullhomologous (so that $H$ still determines a homology class). The only complication in the proof is that we must add 2-handles to cancel the 1-handles before doubling. Similarly, if $F$ is a Riemann surface and $K$ is homotopic to the attaching circle in the usual handle decomposition of $F\times D^2$, we obtain the analogous theorem with $S_m$ replaced by a ruled surface over $F$ and $r=n+\chi(F)$.
\end{remark}

Looking at the complement of $H$ in $S_m$ gives our first result on pseudoconcave fillings:

\begin{cor}\label{cave}
For $K$ and $H$ satisfying the hypotheses of Corollary~\ref{Hirz} (for some orientation of $K$), the handlebody $-H$ on the mirror of $K$ with framing $-n$ admits a complex structure with pseudoconcave boundary. In particular, for any knot $K'\subset S^3$ whose mirror $-K'$ has a Legendrian representative  with $\pm r=tb+1$, surgery on $K'$ with coefficient $1-tb(-K')$ yields a 3-manifold with a pseudoconcave filling homotopy equivalent to $S^2$. \qed
\end{cor}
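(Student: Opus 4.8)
The plan is to extract the statement from Corollary~\ref{Hirz} by taking complements inside $S_m$. First I would observe that if $H$ embeds in $S_m$ as a Stein handlebody representing a section class $\sigma$, then the complement $S_m \setminus \int H$ is a compact complex surface whose boundary is $-\partial H$ with the reversed (hence negative) contact structure, i.e.\ a pseudoconcave filling. The key point is that complementing a handlebody $B^4 \cup_K (\text{2-handle with framing } n)$ inside a 4-manifold produces, up to the 3- and 4-handles, a handlebody on the mirror knot $-K$ with framing $-n$: dualizing the handle decomposition turns the 2-handle into a 2-handle whose attaching circle is $K$ viewed from the other side (the mirror) with the negated framing, and the 0- and 2-handle of $S_m$ contribute a 2- and 0-handle to the complement. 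Since $S_m$ is obtained from a section by a standard handle picture ($S_m = S^2 \tilde\times S^2$ smoothly, with one 0-, one 2-, one 2-, and one 4-handle when $m$ is odd, etc.), a direct Kirby-calculus bookkeeping shows the complement of a regular neighborhood of $\sigma \cup (\text{fiber})$-type picture is handlebody-equivalent to $-H$ together with extra handles of index $\ge 2$, which do not affect the existence of the complex structure on $-H$ itself.

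Next I would make the surgery description explicit. Given $K' \subset S^3$ whose mirror $-K'$ has a Legendrian representative with $\pm r = tb + 1$, set $n = tb(-K') - 1$ so that $-K'$ realizes the $tb$-condition of Corollary~\ref{Hirz} with $tb = n+1$, and (choosing the correct orientation to match the sign of $r$) also $r = n+2$. Then Corollary~\ref{Hirz}, applied with $K = -K'$, produces a Stein handlebody $\tilde H$ (a 0-handle plus an $n$-framed 2-handle on $-K'$) holomorphically embedded in $S_m$ for $m \equiv n \bmod 2$, representing a section class. The complement in $S_m$ is then a pseudoconcave filling of its boundary, and by the handle-dualization argument above this complement deformation-retracts onto $-\tilde H$: a 2-handle attached to $B^4$ along the mirror of $-K'$, namely $K'$, with framing $-n = 1 - tb(-K')$. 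Its boundary is precisely $1 - tb(-K')$ surgery on $K'$. Since $-\tilde H$ has the homotopy type of $S^2 \vee$ (lower cells) — in fact $-H$ built from a 2-handle on a knot in $S^3$ is homotopy equivalent to $S^2$ — we get the asserted pseudoconcave filling homotopy equivalent to $S^2$.

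The main obstacle I expect is the careful verification that the complex structure genuinely lives on $-H$ as stated — i.e.\ that the extra handles of the complement (the dual 0-handle of $S_m$, and the 3-/4-handles needed to close up $S_m$) can be arranged to lie on the pseudoconcave side without interfering, so that $-H$ itself, and not merely some stabilization of it, carries the complex structure with pseudoconcave boundary. This is really a bookkeeping issue: one must track through the doubling/Kirby-calculus argument in the proof of Corollary~\ref{Hirz} (adding a $0$-framed meridian to unknot $K$, identifying the double with $S_m$) and check that reversing roles exhibits the complement as advertised. A secondary, more cosmetic point is matching orientations and the sign of $r$: changing the orientation of $K'$ flips the sign of $r$, which is why the hypothesis is stated as $\pm r = tb+1$, and one must pick the orientation for which Corollary~\ref{Hirz}'s condition $r = n+2$ (rather than $r = -(n+2)$, which would give a different embedding problem) is the one satisfied. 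Once these are handled, the statement follows immediately by applying Corollary~\ref{Hirz} and passing to complements, with no further input needed.
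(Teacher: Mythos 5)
Your strategy — apply Corollary~\ref{Hirz} and take complements in $S_m$ — is exactly the paper's, and the orientation bookkeeping for the second sentence (set $K=-K'$, so $n=tb(-K')-1$ and $-H$ is built on $K'$ with framing $-n=1-tb(-K')$) is handled correctly. But the heart of the first sentence is left as an unresolved "obstacle": you claim the complement of $H$ in $S_m$ is $-H$ "together with extra handles of index $\ge 2$, which do not affect the existence of the complex structure on $-H$ itself." That last clause is a non-sequitur — if the complement genuinely contained extra handles, you would have a complex structure on a stabilization of $-H$, not on $-H$, and cutting it down to $-H$ while keeping a pseudoconcave boundary is not automatic. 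So as written there is a real gap.

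The gap closes immediately once you recall how $S_m$ is produced in the proof of Corollary~\ref{Hirz}: it is literally the double $\partial(I\times H)$ of $H$ along its boundary. By the symmetry of the double, the closed complement of $\int H$ in $S_m$ \emph{is} a second copy of $H$ with reversed orientation — no dualizing of the ambient 0-/4-handles, no extra index-$\ge 2$ handles, no Kirby-calculus bookkeeping at all. (The paper says this explicitly in the proof of Theorem~\ref{brieskorn}: "Since $S_m$ was constructed in that proof by doubling, the complement of $\int H$ is $H$ with reversed orientation.") Since $-H$ as an oriented manifold is the 2-handlebody on the mirror of $K$ with framing $-n$, and its boundary is the pseudoconvex boundary of the Stein $H$ seen with reversed orientation (hence pseudoconcave), the first sentence follows. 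In short: the idea you flagged as needing verification is the whole proof, and it is resolved by remembering the doubling construction rather than by the Kirby-calculus complement argument you sketched.
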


\noindent To prove the last sentence, apply the first with $K=-K'$. Note that the last sentence applies to any knot $K'$ for which $-K'$ has a Legendrian representative with $tb\ge-1$, and then for any sufficiently large integral surgery on $K'$. (Since $tb$ and $r$ have opposite parity for any Legendrian knot in $S^3$, we can choose $\pm r\le0$ and then reduce $tb$ as necessary fixing $r$. Then we can also realize any smaller $tb$, maintaining $\pm r=tb+1$.) A similar corollary applies in the cases of the previous remark, although in the first case the pseudoconcave filling replacing $-H$ is more complicated, and in the second case $F$ replaces $S^2$. The latter case also gives our first approach to constructing pseudoconcave complex structures on manifolds $I\times M^3$, succeeding whenever $M$ is a circle bundle over $F$ with Euler number $e$ for which $|e|\le -\chi(F)$: Simply take the complement of a pair of disjoint Stein handlebodies isotopic to tubular neighborhoods of sections with opposite square in a ruled surface over $F$. The required Stein handlebody structures on disk bundles over $F$ were constructed in \cite{Ann}; see also \cite{GS}. It is not clear whether this approach generalizes, since any modification of the attaching circles, such as summing with a knot, would have to respect the invariants with both orientations. In fact, when $F=S^2$ the approach fails, since it is impossible to find knots $K$ to which the above hypotheses apply with both orientations: The required framings would be $\pm n$ for some $n$, implying $tb(K)+tb(-K)=2$ for the given Legendrian representatives. However, any knot in $S^3$ satisfies $tb(K)+tb(-K)\le -2$ for all Legendrian representatives. (The connected sum $K^*=K\#-K$ is slice, with $tb(K^*)=tb(K)+tb(-K)+1$ if the sum is suitably performed in the Legendrian setting, so the formula follows from the slice-genus inequality $tb(K^*)+|r(K^*)|\le2g_s(K^*)-1$ of \cite{R}; see Corollary~11.4.9 of \cite{GS} for a proof of the latter inequality via Stein theory.) When $M=S^1\times S^2$, we obtain a pseudoconcave structure by removing a Stein neighborhood of $S^1\times\{ \text{2  points}\}$ from $S^1\times S^3=(\C^2-\{0\})/(\times 2)$. We solve the problem for $M$ in a family of Brieskorn spheres in the next section (Theorem~\ref{IxSig}).


\section{Brieskorn spheres} \label{Brieskorn}

We now apply Theorem~\ref{main-} to study pseudoconvex and pseudoconcave embeddings of Brieskorn spheres. Recall that for pairwise relatively prime integers $p_1,p_2,p_3\ge 2$, the {\em Brieskorn sphere\/} $\Sigma(p_1,p_2,p_3)$ is the Seifert fibered homology 3-sphere with multiplicities $p_1,p_2,p_3$. It is canonically oriented as the link of the singularity of the variety $z_1^{p_1}+z_2^{p_2}+z_3^{p_3}=0$ in $\C^3$. That is, we intersect the variety with the unit 6-ball to obtain an oriented singular 4-manifold whose boundary in $S^5$ is $\Sigma(p_1,p_2,p_3)$ with its canonical orientation. Replacing 0 by a small nonzero value $\epsilon$ in the above equation realizes $\Sigma(p_1,p_2,p_3)$ as the boundary of a Stein domain called the {\em Milnor fiber} $\Phi(p_1,p_2,p_3)$. (We abusively use the same notation for the manifold with boundary and the entire variety in $\C^3$.) One also obtains $\Sigma(p_1,p_2,p_3)$ by surgery on three fibers of $S^2\times S^1$, with surgery coefficients (relative to the product framing) $p_i/q_i$ for any choice of $q_1,q_2,q_3$ such that $q_1p_2p_3+p_1q_2p_3+p_1p_2q_3=+1$. (When the sum is $-1$, we obtain the other orientation.) It is well-known (at least up to orientation) how to realize some Brieskorn spheres by surgery on torus knots: For  a positive integer $n$, $\pm 1/n$-surgery on the right-handed $(p,q)$-torus knot $T_{p,q}$ yields $\mp\Sigma(p,q,npq\mp 1)$. This can be verified by identifying the complement of two fibers in $S^2\times S^1$ with the Hopf link complement in $S^3$ so that a third fiber maps to the torus knot. To work with handlebodies with a single 2-handle, we first restrict to the integral case $n=1$.

\begin{thm}\label{brieskorn} Let $p,q$ be relatively prime integers with $2\le p<q$ and let $\epsilon=\pm1$. When $(p,q,pq+\epsilon)\ne (2,3,5)$ or $(2,5,9)$,  the Brieskorn sphere $\Sigma(p,q,pq+\epsilon)$ has a  pseudoconvex (for $\epsilon=+1$) or pseudoconcave (for $\epsilon=-1$) embedding into each rational ruled surface $S_m$ with $m$ odd, splitting it into two simply connected regions with intersection pairings $\langle\pm 1\rangle$, with the canonical orientation of the Brieskorn sphere agreeing with the boundary orientation of the negative side.
\end{thm}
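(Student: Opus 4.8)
The plan is to deduce the theorem from Corollary~\ref{Hirz}, feeding it the torus-knot surgery descriptions recalled just above. Write $T = T_{p,q}$ for the right-handed $(p,q)$-torus knot, set $n = -\epsilon \in \{\pm1\}$, and let $H$ be the $4$-manifold obtained by attaching a single $2$-handle to $B^4$ along $T$ with framing $n$; thus $\partial H$ is the result of $n$-surgery on $T$. By the surgery formula recalled above, $(-1)$-surgery on $T$ yields $\Sigma(p,q,pq+1)$ with its canonical orientation, while $(+1)$-surgery yields $-\Sigma(p,q,pq-1)$. So $\partial H = \Sigma(p,q,pq+1)$ (canonically) when $\epsilon = +1$, and $\partial H = -\Sigma(p,q,pq-1)$ when $\epsilon = -1$. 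Corollary~\ref{Hirz} then realizes $H$ as a holomorphically embedded Stein handlebody in $S_m$ for every $m \equiv n \pmod 2$ --- i.e.\ every odd $m$ --- representing the homology class of a section, necessarily of square $n$, \emph{provided} $T$ has a Legendrian representative with $tb = n+1$ and $r = n+2$. I will grant this for the moment.

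By the doubling argument in the proof of Corollary~\ref{Hirz} ($S_m = \partial(I\times H) = H\cup_{\partial H}(-H)$), the complement of the embedded $H$ in $S_m$ is diffeomorphic to the orientation-reversal $-H$; since ambient isotopy preserves the complement, this persists for the final holomorphic embedding. Both $H$ and $-H$ are simply connected, being built from a $0$-handle and a $2$-handle. As $S_m$ has $b_2 = 2$ and signature $0$ while $H$ has signature $n$, the two regions carry the intersection forms $\langle n\rangle$ and $\langle -n\rangle$, i.e.\ $\langle +1\rangle$ and $\langle -1\rangle$. Hence the $\langle-1\rangle$ side is $H$ itself when $\epsilon = +1$ and $-H$ when $\epsilon = -1$, and in either case its boundary orientation is exactly the canonical orientation of $\Sigma(p,q,pq+\epsilon)$ --- directly for $\epsilon = +1$, and because $\partial(-H) = -\partial H = \Sigma(p,q,pq-1)$ canonically for $\epsilon = -1$. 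Finally $H$, being Stein, has pseudoconvex boundary, so $-H$ has pseudoconcave boundary; thus the embedded Brieskorn sphere is pseudoconvex when $\epsilon = +1$, and (being $-\partial H$) pseudoconcave when $\epsilon = -1$, as the statement requires.

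It remains --- and this is the only place the hypothesis enters --- to produce a Legendrian representative of $T_{p,q}$ with $(tb,r) = (n+1,n+2) = (1-\epsilon,\,2-\epsilon)$: that is, $(0,1)$ for $\epsilon = +1$ and $(2,3)$ for $\epsilon = -1$. Here I would invoke two standard facts about positive torus knots: the maximal value of $tb$ over Legendrian representatives of $T_{p,q}$ is $pq-p-q$, attained with $r = 0$; and $i$-fold stabilization of such a maximal representative realizes every pair $(pq-p-q-i,\,r)$ with $|r|\le i$ and $r\equiv i\pmod 2$ (the sign of $r$ being immaterial here, by reorienting $T$). Note $pq-p-q = (p-1)(q-1)-1$ is odd. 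For $\epsilon = +1$ we need $i = pq-p-q$ with $|r| = 1\le i$ and $1\equiv i \pmod 2$, both automatic from $2\le p<q$. For $\epsilon = -1$ we need $i = pq-p-q-2$ with $|r| = 3\le i$, i.e.\ $pq-p-q\ge 5$ (the parity again automatic). The inequality $(p-1)(q-1)\ge 6$ fails precisely for $(p-1)(q-1)\in\{2,4\}$, which --- using $2\le p<q$ and $\gcd(p,q)=1$ --- forces $(p,q)\in\{(2,3),(2,5)\}$, i.e.\ exactly the excluded triples $(2,3,5)$ and $(2,5,9)$ (both having $\epsilon = -1$). Assembling the pieces then yields the theorem for every admissible $(p,q,\epsilon)$ and every odd $m$.

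I expect the main obstacle to be the orientation bookkeeping: reconciling the ``$\pm1$-surgery on $T_{p,q}$ yields $\mp\Sigma(p,q,pq\mp1)$'' sign conventions with the section-orientation conventions built into Corollary~\ref{Hirz}, so as to confirm which side carries the $\langle-1\rangle$ form with the canonical boundary orientation, and whether the resulting embedding comes out pseudoconvex or pseudoconcave. The Legendrian-invariant computation that produces the two exceptional triples is, by comparison, short and self-contained.
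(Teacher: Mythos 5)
Your proposal is correct and follows essentially the same route as the paper's proof: invoke Corollary~\ref{Hirz} after producing Legendrian representatives of $T_{p,q}$ with $(tb,r) = (0,1)$ or $(2,3)$ by stabilizing the maximal-$tb$ representative (which has $tb = pq-p-q$, $r = 0$), and carry out the same orientation bookkeeping to identify the negative side and the sense (pseudoconvex vs.\ pseudoconcave) of the embedding. The only cosmetic difference is that the paper explicitly exhibits the maximal Legendrian diagram with $p$ left cusps and $(p-1)q$ positive crossings rather than citing the maximality and stabilization range as known facts; one small caution is that your parenthetical ``the sign of $r$ being immaterial \ldots by reorienting $T$'' is misleading --- reorienting the attaching circle changes the embedding problem since sections of $S_m$ are canonically oriented --- but this is harmless because your stabilization range already produces $r = n+2 > 0$ directly.
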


Beware that there are four orientations here to compare on the Brieskorn sphere: the canonical orientation from the singularity, the contact orientation from the given embedding, and the boundary orientations of the two pieces cut out of $S_m$. The distinction between pseudoconvexity and pseudoconcavity comes from comparing the contact orientation to some preassigned orientation on the hypersurface. Elsewhere in this paper, the hypersurface arises as the boundary of a complex manifold, giving it the required preassigned orientation. Now, there are two conflicting boundary orientations, so we use the canonical orientation of the Brieskorn sphere --- which happens to be the boundary orientation of the side with negative definite intersection form.

\begin{proof} Since $p$ and $q$ are relatively prime and $\ge2$, we have $(p-1)(q-1)=2l$ for some positive integer $l$. The obvious diagram of the knot $T_{p,q}$ can be made Legendrian with $p$ left cusps and $(p-1)q$ crossings, all positive. Thus, it has $tb=(p-1)q-p=2l-1$. Since the numbers of upward and downward cusps are equal, $r=0$. Adding $l-1$ upward zig-zags and $l$ downward zig-zags, we obtain $tb=0$ and $r=1$. Thus, Corollary~\ref{Hirz} applies to the handlebody $H$ determined by $T_{p,q}$ with framing $n=-1$. We obtain $H\subset S_m$ representing the homology class of the smooth section of square $-1$. Since $S_m$ was constructed in that proof by doubling, the complement of $\int H$ is $H$ with reversed orientation, so both regions are simply connected. The boundary of $H$ is $-1$-surgery on $T_{p,q}$, which is $\Sigma(p,q,pq+1)$. This completes the $\epsilon=+1$ case. For the $\epsilon=-1$ case, note that unless $(p,q)=(2,3)$ or $(2,5)$, we have $l\ge3$. Adding $l-3$ upward zig-zags and $l$ downward zig-zags to the original Legendrian diagram gives $tb=2$ and $r=3$. Now the previous argument applies with $n=+1$. We obtain $-\Sigma(p,q,pq-1)$ realized as the pseudoconvex boundary of the positive region, and the theorem follows.
\end{proof}

\begin{cor} \label{briesfill}
For $p,q,\epsilon$ satisfying the previous hypotheses, the Brieskorn sphere $-\epsilon\Sigma(p,q,pq+\epsilon)$ has a pseudoconcave  holomorphic filling diffeomorphic to the handlebody determined by the left-handed torus knot $-T_{p,q}$ with framing $\epsilon$, and with Chern number $2+\epsilon$.
\qed
\end{cor}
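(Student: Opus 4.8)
The plan is to read Corollary~\ref{briesfill} straight off the proof of Theorem~\ref{brieskorn}, together with Corollary~\ref{cave}; the only genuinely new ingredient is the Chern-number computation.

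For each $\epsilon=\pm1$, the proof of Theorem~\ref{brieskorn} builds, via Corollary~\ref{Hirz}, a holomorphically embedded Stein handlebody $H\subset S_m$ consisting of a single 2-handle attached to $B^4$ along a Legendrian representative of $T_{p,q}$ with framing $n:=-\epsilon$, whose pseudoconvex boundary $\partial H$ is $n$-surgery on $T_{p,q}$, i.e.\ $\epsilon\,\Sigma(p,q,pq+\epsilon)$. Since $S_m$ was produced there as $\partial(I\times H)=(-H)\cup_{\partial H}H$, the complement of $\int H$ in $S_m$ is diffeomorphic to $-H$ (and ambient isotopy of $H$ preserves this), which as an abstract handlebody is the single 2-handle attached to $B^4$ along the mirror $-T_{p,q}$ with framing $-n=\epsilon$ --- exactly the handlebody of Corollary~\ref{cave}. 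By Theorem~\ref{main-} the embedding $H\hookrightarrow S_m$ may be taken holomorphic, so $-H$, carrying the restricted complex structure of $S_m$, is a complex manifold with pseudoconcave boundary $-\partial H=-\epsilon\,\Sigma(p,q,pq+\epsilon)$. This is the desired pseudoconcave holomorphic filling, diffeomorphic to the handlebody on $-T_{p,q}$ with framing $\epsilon$.

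It remains to compute the Chern number. The handlebody $-H$ deformation retracts onto the 2-sphere $A$ built from the core of its 2-handle and a disk in $B^4$, of self-intersection $-n=\epsilon$. Its class in $H_2(S_m;\Z)$ is disjoint from the section class $\sigma$ (of square $n$) onto which $H$ retracts, and $\sigma,A$ form a basis; since $m$ is odd the intersection form is $\langle1\rangle\oplus\langle-1\rangle$ in this basis, so $A\cdot F=\pm1$ for the fiber class $F$ and hence $A$ represents $\pm(\text{a holomorphic section of square }\epsilon)$. Adjunction then gives $c_1(S_m)\cdot A=A\cdot A+2=2+\epsilon$ for the appropriate orientation of $A$, equivalently of the attaching knot $-T_{p,q}$. (Alternatively, combine $c_1(S_m)\cdot\sigma=n+2$, established in the proof of Corollary~\ref{Hirz}, with $c_1(S_m)^2=8$ to get $(c_1(S_m)\cdot A)^2=(n-2)^2=(2+\epsilon)^2$.) Since the complex structure on $-H$ is the restriction of that of $S_m$, its Chern number is $2+\epsilon$.

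The whole argument is essentially bookkeeping: the substance was already established in the proof of Theorem~\ref{brieskorn}. The one place needing care is the orientation accounting in the last step --- fixing the orientation of $-T_{p,q}$ so that $c_1(S_m)\cdot A$ comes out $+(2+\epsilon)$ rather than its negative; that sign choice is what the phrase ``Chern number $2+\epsilon$'' records.
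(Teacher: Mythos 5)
Your proof is correct and takes the approach the paper intends (the corollary is stated with a terminal \verb|\qed|, so the reader is expected to read it off Theorem~\ref{brieskorn} and Corollary~\ref{cave} exactly as you do). The topological identifications are right: $H\subset S_m$ from the proof of Theorem~\ref{brieskorn} has $n=-\epsilon$, so its complement $-H$ is the handlebody on $-T_{p,q}$ with framing $\epsilon$, with pseudoconcave boundary $-\partial H = -\epsilon\,\Sigma(p,q,pq+\epsilon)$, and the complex structure is the restriction from $S_m$. Your Chern-number computation is also correct, and you rightly flag that the stated value $2+\epsilon$ tacitly fixes an orientation of the generator of $H_2(-H;\Z)$.

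One small wording point: $A$ need not be represented by a \emph{holomorphic} section of $S_m$ --- for $m$ odd the holomorphic sections have square $\pm m$, whereas $A^2=\epsilon=\pm1$. What the proof of Corollary~\ref{Hirz} actually invokes (and what you should echo) is that $A$ is represented by a \emph{symplectic} sphere, which suffices for adjunction. Your alternative computation using $c_1(S_m)^2=8$ with the orthogonal basis $\sigma,A$ sidesteps this altogether and is the cleaner route to cite.
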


We consider a few contrasting examples. The 3-sphere embeds as in the theorem precisely when $m=1$, and only in the pseudoconvex case. The example is obtained from $S^3=\partial B^4\subset\CP^2$ by blowing up an interior point of $B^4$. Unlike for the Brieskorn spheres, the pseudoconvex domain is not Stein, but a blowup of the Stein domain $B^4$. Other embeddings of $S^3$ as in the theorem are ruled out since the only holomorphic (pseudoconvex) fillings of $S^3$ are blowups of $B^4$ \cite{Durham}. The Brieskorn sphere $\Sigma(2,3,5)$ has no pseudoconcave embedding as in the theorem or tight filling as in the corollary, since it admits no negative tight contact structure \cite{EtH}. The remaining excluded example $\Sigma(2,5,9)$ cannot be realized by the above method using $T_{2,5}$, because of the genus inequality $2+3=tb+|r|\le 2g-1$, since $T_{2,5}$ has genus $g=2$. However, this leaves open the possibility of realization by other knots or handlebodies with more handles. The above method at least shows that $-\Sigma(2,5,9)$ does bound a Stein handlebody with the right homology, made from  $T_{2,5}$ with framing +1, but with the wrong Chern class (given by $r=1$) for our application. The theorem also leads us to consider embeddings with the opposite orientation:

\begin{ques} Is there a pseudoconvex or pseudoconcave embedding of a Brieskorn sphere in a rational ruled surface, as the oriented boundary of a region with intersection pairing $\langle+1\rangle$?
\end{ques}

\noindent A natural approach would be to apply the previous method to left-handed torus knots. However, a Legendrian representative of such a knot must have negative $tb$, so the resulting handlebody with framing $\pm 1$ cannot be a Stein handlebody. Note that $\Sigma(2,3,5)$ admits no embedding as in the question, even at the level of smooth manifolds, for otherwise we could cut out the positive region and glue in the Milnor fiber to obtain a 4-manifold with the negative definite, nonstandard form $\langle-1\rangle\oplus E_8$, violating Donaldson's Theorem \cite{D}. We can also ask whether any Brieskorn sphere has two isotopic embeddings realizing pseudoconvexity in both directions, or more strongly, whether $I\times\Sigma(p,q,r)$ has a pseudoconcave embedding into any $S_m$, splitting the homology nontrivially. We give examples of pseudoconcave embeddings of such products into other complex surfaces in Theorem~\ref{IxSig}. Note that only the mod 2 residue of $m$ affects the answers except when $m=1$ and the pseudoconvex region contains the exceptional sphere. Otherwise, this region will be a Stein domain, and the almost-complex (and symplectic) structure only depends on $m$ mod 2, so Theorem~\ref{main-} proves the observation.

Every Brieskorn sphere inherits a contact structure as the pseudoconvex boundary of its Milnor fiber, so it is natural to compare these structures with the ones inherited via Theorem~\ref{brieskorn}. For $\epsilon=-1$ the structures are clearly different, since they induce opposite orientations on the 3-manifold, but it is less clear how the structures compare when $\epsilon=+1$. We now show that in both cases, the structures frequently cannot even be homotopic as plane fields.

\begin{prop}\label{theta} If $\epsilon=1$ and $p,q\equiv 2$ (mod 3) then the contact structure induced by Theorem~\ref{brieskorn} is not homotopic through plane fields to the one induced by the Milnor fiber. If $\epsilon=-1$, the two structures are homotopic if and only if $(p,q,pq+\epsilon)=(2,7,13)$ or $(3,4,11)$.
\end{prop}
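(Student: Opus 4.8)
The plan is to reduce the comparison of plane fields to a computation of the $d_3$-invariant (equivalently, Gompf's $\theta$-invariant for plane fields on homology spheres), which is exactly the setting of Proposition~\ref{theta}'s proof and the remark referenced after it ($\theta(\xi)$ takes values in $\Q$ and, on a homology sphere with a fixed spin structure, completely classifies plane fields up to homotopy together with the underlying $\operatorname{spin}^\C$ structure, which is unique here). Two tight contact structures on a Brieskorn sphere are homotopic as plane fields if and only if they induce the same orientation and the same value of $\theta$. Thus everything comes down to: (1) computing $\theta$ for the contact structure arising from Theorem~\ref{brieskorn}, and (2) computing $\theta$ for the Milnor-fiber contact structure, and comparing.

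First I would handle the Theorem~\ref{brieskorn} side. That contact structure bounds the handlebody $H$ (or $-H$) built from the Legendrian torus knot $T_{p,q}$ with the zig-zags specified in the proof of Theorem~\ref{brieskorn}; its $\theta$-value is computed from the standard formula $\theta(\xi) = c^2 - 2\chi - 3\sigma$ evaluated on that 4-manifold (with the appropriate normalization for homology spheres), where $c$ is represented by a cocycle taking value $r$ on the 2-handle. Since the framing is $n=\epsilon$ ($n=-1$ for the $\epsilon=+1$ case, $n=+1$ for the $\epsilon=-1$ case after reorienting), $\chi = 2$, $\sigma = \operatorname{sign}(n) = \epsilon$, and $c^2 = r^2/n$, this yields a concrete rational number in terms of $p,q$ via $l=(p-1)(q-1)/2$ and the recorded value $r$ (which is $1$ when $\epsilon=+1$ and $3$ when $\epsilon=-1$). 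The bookkeeping of which orientation is in play — recall the Brieskorn sphere carries its canonical orientation agreeing with the negative side — must be tracked carefully, since $\theta$ changes sign (up to the shift) under orientation reversal.

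Second I would compute $\theta$ for the Milnor-fiber contact structure on $\Sigma(p,q,pq+\epsilon)$. For $\epsilon=+1$ the Milnor fiber $\Phi(p,q,pq+1)$ is a Stein filling, and its $\theta$ is again $c_1^2 - 2\chi - 3\sigma$ of the Milnor fiber, with $c_1 = 0$ (the canonical class of the Milnor fibration is standard), $\chi$ the Milnor number plus $1$, and $\sigma$ the signature of the Milnor fiber — for which the excerpt's own formula (Corollary~\ref{sigma} gives the $npq-1$ family; the analogous classical Brieskorn signature for $pq\pm1$ is what's needed, obtainable from the Brieskorn/Hirzebruch formula or directly) supplies a closed form. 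For $\epsilon=-1$ the Milnor fiber contact structure lives on $\Sigma(p,q,pq-1)$ with its canonical orientation, which is the \emph{opposite} of the boundary orientation of the positive region in Theorem~\ref{brieskorn}, so comparison requires reversing orientation on one side and then matching $\theta$-values. In both cases, equating the two computed rational numbers reduces the homotopy question to an explicit Diophantine condition on $(p,q)$. I expect the $\epsilon=-1$ equation to cut out exactly the two sporadic pairs $(2,7)$ and $(3,4)$ claimed, while the $\epsilon=+1$ equation should never be solvable under the congruence $p\equiv q\equiv 2\pmod 3$ (this congruence is precisely what forces the two $\theta$-values apart; I would verify that $3\mid(p^2-1)(q^2-1)$-type divisibility that makes the Milnor-fiber $\theta$ an integer there, contrasted with the $\theta$ from Theorem~\ref{brieskorn}).

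The main obstacle will be the second step done correctly for $\epsilon=-1$: getting the signature and Euler characteristic of the Milnor fiber $\Phi(p,q,pq-1)$ in usable closed form, fixing all orientation conventions consistently across the two fillings (which sit on opposite sides of the 3-manifold), and then solving the resulting Diophantine equation to show the solution set is exactly $\{(2,7),(3,4)\}$ — the "if and only if" is the delicate part, since it requires ruling out all other $(p,q)$, not merely exhibiting that these two work. The $\epsilon=+1$ non-homotopy statement, by contrast, should follow cleanly from a mod-3 (or mod-something) obstruction once both $\theta$-values are in hand.
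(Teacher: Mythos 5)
Your strategy is the same as the paper's: reduce the comparison to Gompf's $\theta$-invariant (which classifies plane fields on a homology sphere, up to the $\pm$ from orientation), compute $\theta=-2$ for the Theorem~\ref{brieskorn} side using $c_1^2-2\chi-3\sigma$ on the rank-one Stein handlebody, and compute $\theta(\partial\Phi)$ for the Milnor fiber. (Your small slip ``$\sigma=\operatorname{sign}(n)=\epsilon$'' has the sign backwards, since $n=-\epsilon$, but you correct yourself in the parenthetical, so it wouldn't derail anything.) For $\epsilon=-1$ your plan and the paper's are essentially identical: the paper plugs $n=1$ into Corollary~\ref{sigma}(a), gets $\theta(\partial\Phi)=2l(6-2l)-2$ with $l=(p-1)(q-1)/2$, and solves $\theta=-2$ to force $l=3$, i.e.\ $(p,q)\in\{(2,7),(3,4)\}$. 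Where the two routes genuinely diverge is the $\epsilon=+1$ case. You propose to compute the full signature of $\Phi(p,q,pq+1)$ and then look for a mod-3 obstruction; the paper short-circuits this by never computing $\sigma(\Phi)$ at all. Since $\theta(\partial\Phi)=-2\chi(\Phi)-3\sigma(\Phi)=-2(b_2(\Phi)+1)-3\sigma(\Phi)$, the signature term vanishes mod 3, so $\theta\equiv -2\pmod 3$ iff $b_2(\Phi)\equiv 0\pmod 3$. With $b_2(\Phi)=(p-1)(q-1)(pq)$, the hypothesis $p\equiv q\equiv 2\pmod 3$ gives $b_2\equiv 1$, so $\theta\ne -2$ and you are done---no signature formula needed. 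Note also that the polynomial identity $\sigma=-n(p^2-1)(q^2-1)/3$ you were reaching for is specific to the $npq-1$ family; there is no equally clean formula for $\Phi(p,q,pq+1)$, so the paper's mod-3 shortcut is not just more elegant but also avoids a real computation (lattice-point counting) your plan would require.
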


\begin{proof} Since oriented planes through 0 in $\R^3$ are determined by their positive unit normal vectors, oriented plane fields $\xi$ on a homology 3-sphere $\Sigma$ are classified up to homotopy by $\pi_3(S^2)\cong\Z$ (up to translation). The homotopy classes are determined by the invariant $\theta$ of \cite{Ann}: For any compact, almost-complex $(X,J)$ with boundary $(\Sigma,\xi)$, we have $\theta(\Sigma,\xi)=c_1^2(X,J)-2\chi(X)-3\sigma(X)$. The invariant on a given homology sphere ranges over all integers congruent to 2 mod 4. (See the bottom of p.\ 445 in \cite{GS}.) We check that each contact structure induced on a Brieskorn sphere $\Sigma$ by Theorem~\ref{brieskorn} has $\theta=-2$: For $\epsilon=+1$, we exhibited $\Sigma$ as the boundary of a Stein handlebody $H$ with intersection form $\langle -1\rangle$, given by a Legendrian knot with $r=1$. This value of $r$ implies that $c_1(H)$ is a generator of $H^2(H)\cong\Z$, so $\theta=-1-4+3=-2$. For $\epsilon=-1$, we obtain $-\Sigma$ as the boundary of a Stein $H$ with form $\langle +1\rangle$ and $r=3$. Now $\theta(-\Sigma)=3^2-4-3=2$, but reversing orientation on $\Sigma$ reverses the sign of $\theta$.

 The Milnor fiber $\Phi=\Phi(p,q,r)$ is the preimage of a regular value under a complex polynomial, so its normal bundle in $\C^3$ is trivial, as is $T\C^3|\Phi$. Thus, $c_1(\Phi)=0$, and the induced contact structure on $\Sigma(p,q,r)$ has $\theta=-2\chi(\Phi)-3\sigma(\Phi)$. Since $\Phi$ is homotopy equivalent to a wedge of 2-spheres, $\chi(\Phi)=b_2(\Phi)+1$, and we can show $\theta\ne-2$ by checking that $b_2(\Phi)\not\equiv0$ mod 3. But $b_2(\Phi)=(p-1)(q-1)(r-1)$ \cite{B}, so the congruence follows from the hypotheses that $r=pq+\epsilon$ and $p,q\equiv 2$ (mod 3) (or that $\epsilon=-1$ and $p,q\not\equiv 1$ (mod 3)). For $\epsilon=-1$, we explicitly compute $\theta$ for $(p,q,npq-1)$ in Corollary~\ref{sigma} below. Setting $n=1$, we see that $\theta(\partial\Phi)=-2$ precisely when $(p-1)(q-1)=6$, i.e., $(p,q)=(2,7)$ or $(3,4)$.
\end{proof}

In fact, the difference of the induced plane fields can be guaranteed to be arbitrarily large, just by taking $p$ sufficiently large. This is clear from Corollary~\ref{sigma}(a) when $\epsilon=-1$, and can be seen in general from Brieskorn's expression \cite{B} for $\sigma(\Phi)$ as a signed count of lattice points in a cube in $\R^3$, if we use volumes to estimate the count when the lattice is sufficiently fine.

Now we set $\epsilon=-1$ and expand our family of Brieskorn spheres by allowing arbitrary $n$. We construct pseudoconcave (and pseudoconvex) fillings with both orientations, by finding a pseudoconcave embedding of the product with $I$ in a suitable generalization of an elliptic surface. This also yields a peculiar deformation of the corresponding Milnor fibers (Corollary~\ref{milnordef}(b)).

\begin{thm}\label{IxSig} For $n,p,q\in\Z_+$ with $2\le p<q$ and $p,q$ relatively prime, the 4-manifold $I\times \Sigma(p,q,npq-1)$ has a complex structure with pseudoconcave boundary if and only if $(p,q,npq-1)\ne (2,3,5)$. The induced positive contact structure on $\{0\}\times \Sigma(p,q,npq-1)$ (which is negative in the boundary orientation $-\Sigma(p,q,npq-1)$) is the one canonically determined by the Milnor fiber.
\end{thm}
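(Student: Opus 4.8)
The plan is to realize $I\times\Sigma(p,q,npq-1)$ as the complement of two disjoint Stein handlebodies inside a closed complex surface $Z$ obtained by compactifying the Milnor fiber $\Phi(p,q,npq-1)$ in a way generalizing the decomposition of an elliptic surface into a Milnor fiber and a nucleus. Concretely, recall that $\Sigma(p,q,npq-1)$ is $-1/n$-surgery on the right-handed torus knot $T_{p,q}$ (with the orientation making it $-\Sigma$ bounding $+1/n$-surgery); one can build a closed complex surface $Z$ containing a ``generalized nucleus'' $N$ — the neighborhood of a cusp fiber and a section in the standard elliptic surface being the prototype — such that $Z = \Phi \cup N$ along $\Sigma(p,q,npq-1)$, and such that $N$ deformation retracts onto a $2$-complex. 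The first step is to set up this compactification carefully: produce $Z$ as an honest complex surface (e.g.\ a blown-up elliptic surface or a closely related construction) and identify $N$ with a specific handlebody built from a $0$-handle, a single $1$-handle (or none), and a couple of $2$-handles whose attaching circles are explicit framed torus knots/cables, so that $\partial N = \Sigma(p,q,npq-1)$ with its canonical (Milnor-fiber) orientation.

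Next I would produce, inside $N$, \emph{two} disjoint Stein handlebodies $H_0, H_1$, each isotopic in $N$ to a collar-pushed copy engineered so that $N \setminus (\mathrm{int}\, H_0 \cup \mathrm{int}\, H_1) \cong I\times\Sigma(p,q,npq-1)$. The idea mirrors Corollary~\ref{cave} and the ruled-surface discussion: take $H_1$ to be a Stein handlebody isotopic to a neighborhood of the section-type part of $N$, and $H_0$ a Stein handlebody isotopic to a neighborhood of the rest; then the region between them is a product. To make $H_0$ and $H_1$ Stein, I need Legendrian representatives of the attaching circles with the framing equal to $tb-1$ and the Chern data matching $r$; the Legendrian realization of cables of torus knots, with enough zig-zags to hit the required (negative enough) framings, is exactly the kind of computation done in the proof of Theorem~\ref{brieskorn}. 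Having built $H_0, H_1$ abstractly as Stein handlebodies smoothly embedded in $Z$ with the correct homotopy class of almost-complex structure (checked via $c_1$ and orientation, there being no $2$-torsion in the relevant cohomology), Theorem~\ref{main-} applies to each to isotope them to holomorphically embedded Stein handlebodies. Their complement $Z \setminus (\mathrm{int}\, H_0\cup\mathrm{int}\,H_1)$ is then a complex manifold diffeomorphic to $I\times\Sigma(p,q,npq-1)$; since each $H_i$ has pseudoconvex boundary, the complement has \emph{pseudoconcave} boundary on both ends, giving the complex structure asserted.

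For the ``only if'' direction and the identification of the induced contact structure, the point is that one of the two boundary components — say $\{0\}\times\Sigma(p,q,npq-1)$, which is $\partial H_1$ (or the face of $N$ coming from $\partial\Phi$) — inherits, with its orientation as the link of the singularity (opposite the product boundary orientation), exactly the positive contact structure carried by the Milnor fiber $\Phi$: by construction $\Phi$ and this piece are glued along that $3$-manifold compatibly, and the pseudoconvex boundary of $\Phi$ is the Milnor contact structure. The exclusion of $(2,3,5)$ is forced because $\Sigma(2,3,5)$ admits no negative tight contact structure \cite{EtH} (as already used after Theorem~\ref{brieskorn}), so a pseudoconcave structure on $I\times\Sigma(2,3,5)$ — whose top boundary would be $\Sigma(2,3,5)$ with a negative contact structure — cannot exist; alternatively, for $n=1$ this was already covered by Theorem~\ref{brieskorn}'s exclusions.

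The main obstacle I anticipate is step one combined with the Legendrian bookkeeping in step two: constructing the closed complex surface $Z$ with a generalized nucleus $N$ whose handle structure is explicit enough that the attaching circles are identifiable framed links, \emph{and} simultaneously arranging that the two Stein handlebodies $H_0, H_1$ fit inside $N$ with product complement while their attaching circles admit Legendrian representatives achieving the framings $tb-1$ needed for Stein-ness. The torus knots $T_{p,q}$ have Legendrian representatives with $tb=(p-1)q-p$, which is already positive and large, so reducing to the required framing by adding zig-zags is fine in that direction; the delicate point is the Chern/rotation-number constraint — getting $r$ to match the value dictated by the adjunction formula in $Z$ — together with checking there is no parity or torsion obstruction to the homotopy class of $J$. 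This is also where the signature formula $\sigma(\Phi) = -n(p^2-1)(q^2-1)/3$ of Corollary~\ref{sigma} gets extracted as a byproduct: once $Z = \Phi \cup N$ is realized with $N$ of known signature inside a complex surface of known $\chi, \sigma, c_1^2$, Novikov additivity pins down $\sigma(\Phi)$, and comparing $c_1^2 - 2\chi - 3\sigma$ across the gluing computes $\theta$ of the induced contact structure as claimed in Proposition~\ref{theta}.
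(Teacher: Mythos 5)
Your high-level strategy — compactify the Milnor fiber to a closed complex surface $Z=\Phi\cup N$ with a ``generalized nucleus'' $N$, and apply Theorem~\ref{main-} inside $N$ — is the paper's strategy, but the way you set up the product decomposition has a genuine error, and you leave the central construction unspecified.

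The problem is your attempt to realize $I\times\Sigma(p,q,npq-1)$ as $N\setminus(\int H_0\cup\int H_1)$ for two disjoint Stein handlebodies inside $N$. This cannot work as stated. The nucleus $N$ is a regular neighborhood of a connected configuration of curves (a section $\sigma$ of square $-n$ and a singular fiber $F$ meeting it in a point, giving the handlebody on $T_{p,q}$ with framing $0$ plus a $-n$-framed meridian). A ``neighborhood of the section-type part'' and a ``neighborhood of the rest'' are not disjoint, and $N$ has a single boundary component, so removing two interior handlebodies does not leave a product cobordism. The paper's argument needs only \emph{one} application of Theorem~\ref{main-}: since $\Phi$ is already a Stein domain, $\partial N=\partial\Phi$ is already pseudoconcave from the $N$ side. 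One only has to exhibit a smaller copy $N'\subset\int N$ as a Stein handlebody (ambient isotopy inside $\int N$); then $N-\int N'$ is diffeomorphic to $I\times\Sigma(p,q,npq-1)$ with \emph{both} boundary components pseudoconcave — the outer one because it equals $\partial\Phi$, the inner one because it equals $\partial N'$. This also makes the identification of the outer contact structure with the Milnor-fiber one immediate (it is literally $\partial\Phi$, unmoved), whereas in your version the relevant boundary is $\partial H_0$, which is not $\partial\Phi$ and would require an extra argument to compare.

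The second gap is that you flag the construction of $Z$ and $N$ as ``the main obstacle'' but do not resolve it. This is where the real content lies: the paper builds $\bar X$ explicitly as the $p$-fold cover of the rational ruled surface $S_{np}$ branched over a curve $C$ (a compactified Milnor fiber of $T_{q,npq-1}$) together with the section at infinity, in a total space that is the pullback of $L_{nq}$ over $L_{np}$. This is what produces, in closed form, the diffeomorphism type of $N$, the framings $0$ and $-n$, the Chern numbers $\langle c_1,\sigma\rangle=2-n$ and $\langle c_1,F\rangle=2-(p-1)(q-1)$ via adjunction, and hence the exact rotation numbers $r=2-2l$ and $r=2-n$ (or $r=3-2l$ with framing $+1$ after blowing down when $n=1$) that the Legendrian diagram must hit. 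Your zig-zag bookkeeping and the use of the classical Legendrian representative of $T_{p,q}$ with $tb=(p-1)q-p$, $r=0$ is correct in outline and matches the paper; but without the explicit $Z$ you cannot pin down the target Chern numbers, so the Stein realizability of $N'$ is not actually verified. The exclusion of $(2,3,5)$ is correctly attributed to the nonexistence of a negative tight contact structure, though in the paper it falls out of the zig-zag count ($2l-3\ge 0$ fails exactly for $(p,q)=(2,3)$, $n=1$) rather than needing to be invoked separately for the ``if'' direction.
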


\noindent Note that the complex structure induces a plane field on each $\{t\}\times \Sigma(p,q,npq-1)$, so the boundary contact structures are homotopic through plane fields.

\begin{proof} We construct a closed complex surface containing the given product manifold. For $k\in\Z$, let $\pi\co L_k\to\CP^1$ be the line bundle with Chern number $k$. Let $\tilde\pi\co \pi^*L_{nq}\to L_{np}$ be the pullback of $L_{nq}$ by $\pi$ for $k=np$. The total space $\pi^*L_{nq}$ is covered by two charts $\psi_i$, $i=0,1$, with coordinates $z_i$ (for $\CP^1$), $y_i$ (for the fibers of $\pi$ on $L_{np}$) and $x_i$ (for the fibers of $\tilde\pi$), related by $z_1=z_0^{-1}$, $y_1=z_0^{-np}y_0$ and $x_1=z_0^{-nq}x_0$. Consider the variety in this space given by
$$X=\{x_1^p+y_1^q+z_1^{npq-1}=\epsilon\}=\{x_0^p+y_0^q=(\epsilon z_0^{npq-1}-1)z_0\}\subset\pi^*L_{nq}$$
for a fixed $\epsilon\ne0$. This is a smooth complex surface, since it restricts to the Milnor fiber $\Phi(p,q,npq-1)$ under $\psi_1$ and the graph of $x_0^p+y_0^q=z_0$ (after a holomorphic coordinate change) near $z_0=0$. The restriction $\tilde\pi\co X\to L_{np}$ is a $p$-fold branched covering map, since any fixed $(y_i,z_i)$ corresponds to $p$ values of $x_i$, except on the branch locus $X\cap\{x_i=0\}$ over
$$C=\{y_1^q+z_1^{npq-1}=\epsilon\}=\{y_0^q=(\epsilon z_0^{npq-1}-1)z_0\}\subset L_{np}$$
in the base $L_{np}$. The curve $C$ is also smooth, since it is given by a Milnor fiber in $\C^2$ (essentially a Seifert surface of $T_{q,npq-1}$) in one chart, and a graph (up to coordinate change) near $z_0=0$. Furthermore, the restriction $\pi\co C\to\CP^1$ is a $q$-fold branched covering map, with branch locus given by the $npq$ generic roots of $(\epsilon z_0^{npq-1}-1)z_0$. In particular, $C$ is compact. Thus, we have exhibited $X$ as the $p$-fold cover of $L_{np}$ branched over the compact curve $C$. Over each fiber of $L_{np}$ (i.e.\ $z_i$ constant), the corresponding fiber in $X$ is the $(p,q)$-Milnor fiber, exhibited as a $T_{p,q}$ Seifert surface in the $x_i$-$y_i$ plane, except that at the $npq$ roots of $(\epsilon z_0^{npq-1}-1)z_0$ we see a fiber with a singularity that is a cone on $T_{p,q}$. In either case, the complex curve has only a single puncture at infinity (since $p$ and $q$ are relatively prime so that $T_{p,q}$ has a single component). Thus, a disk subbundle of $L_{np}$ large enough to contain $C$ has boundary a lens space with $\pi_1=\Z/np$, over which $\tilde\pi$ is the unique connected $p$-fold covering. If we compactify $L_{np}$ to the rational ruled surface $S_{np}$ by adding a section $C_\infty$ of square $-np$ at infinity, the branched covering $\tilde\pi$ extends to $\bar\pi\co\bar X\to S_{np}$, a $p$-fold covering branched over $C\cup C_\infty$. The cover $\bar X$ is a compact complex surface containing the original Milnor fiber as the complement of a pair of curves: the lift $\sigma$ of $C_\infty$ and the curve $F$ obtained by compactifying $X\cap\{ z_0=0\}$. The smooth sphere $\sigma$ has square $-n$, and is a section of the extended projection $\pi\circ\bar\pi\co \bar X\to \CP^1$. The curve $F$ is a sphere with one singularity that is a cone on $T_{p,q}$. (In fact, $\pi\circ\bar\pi$ exhibits a singular fibration on $\bar X$ with fiber genus $(p-1)(q-1)/2$ and $npq$ singular fibers diffeomorphic to $F$. For $(p,q)=(2,3)$, we recover the usual description of $\bar X=E(n)$ as an elliptic surface with $6n$ cusp fibers.)

Now let $N\subset \bar X$ be obtained by deleting the intersection of $\bar X$ with a sufficiently large round open ball in the chart given by $\psi_1$. Then $N$ is a regular neighborhood of $\sigma\cup F$, with pseudoconcave boundary $-\Sigma(p,q,npq-1)$, and closed complement the compact Milnor fiber $\Phi(p,q,npq-1)$. The negative contact structure on $\partial N$ equals the positive contact structure on $+\Sigma(p,q,npq-1)$ induced by the Milnor fiber. But $\sigma \cdot \sigma=-n$ and $F\cdot F=0$, since $F$ is homologous to a disjoint regular fiber given by constant $z_0$ in $\bar X$. Thus, $N$ is the handlebody given by $T_{p,q}$ with framing 0 and a $-n$-framed meridian (verifying directly that $\partial N$ is $\frac1n$-surgery on $T_{p,q}$ as required). By the adjunction formula, $\langle c_1(\bar X),\sigma\rangle=2-n$ and $\langle c_1(\bar X),F\rangle=2-(p-1)(q-1)=2-2l$ in the notation of the proof of Theorem~\ref{brieskorn}. As observed there, $T_{p,q}$ has a Legendrian representative with $tb=2l-1$ and $r=0$. Adding $2l-2$ zig-zags gives $tb-1=0$ and $r=2-2l$. If $n\ge 2$, we can add a Legendrian meridian with $tb-1=-n$ and $r=2-n$, exhibiting the smooth manifold $N$ as a Stein domain whose Chern class agrees with that obtained from the embedding in $\bar X$. Now Theorem~\ref{main-} gives a compactly supported ambient isotopy of the complex surface $\int N$ sending a smaller copy of $N$ to a Stein domain $N'\subset N$, and $N-\int N'$ is the required pseudoconcave complex surface diffeomorphic to $I\times \Sigma(p,q,npq-1)$. If $n=1$, we blow down $\sigma$ to obtain $N^*\subset X^*$, with $N^*$ given by $T_{p,q}$ with framing 1. Now we must find a Legendrian representative of $T_{p,q}$ with $tb=2$ and $r=3-2l$. We can get this from our original Legendrian diagram by adding $2l-3$ zig-zags, as long as $2l-3\ge0$, or equivalently, $(p,q)\ne (2,3)$. In that remaining case, we are dealing with $\Sigma(2,3,5)$, so no negative tight contact structure exists.
\end{proof}

The proof actually gives us much more information:

\begin{cor} \label{milnordef}
For $p,q,n$ as in Theorem~\ref{IxSig},
\item[a)] the Brieskorn sphere $-\Sigma(p,q,npq-1)$ has a pseudoconcave filling, by the handlebody on $T_{p,q}$ with framing $+1$ if $n=1$, or on $T_{p,q}$ with framing 0 and a meridian with framing $-n$ in general. The induced contact structure is given by the Milnor fiber. If $(p,q,npq-1)\ne(2,3,5)$, the identity map on the filling is isotopic (in the filling) to an embedding with pseudoconvex boundary.
\item[b)] If $(p,q,npq-1)\ne(2,3,5)$,  the usual (Stein) complex structure on the open Milnor fiber $\Phi(p,q,npq-1)$ (for any fixed $\epsilon\ne0$) can be deformed, through a real 1-parameter family of complex structures agreeing on a preassigned compact subset, to a complex structure on which the level sets of all sufficiently large values of the radial function (restricted from $\C^3$) are pseudoconcave. (This gives a pseudoconcave filling of $\Sigma(p,q,npq-1)$.)
\qed
\end{cor}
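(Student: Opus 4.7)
My plan is to extract both parts from the construction in the proof of Theorem~\ref{IxSig}, together with the Stein domain $N' \subset N \subset \bar X$ supplied by Theorem~\ref{main-} there.

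For part (a), take the pseudoconcave filling to be $N$ itself with its inherited complex structure. Its handlebody description (the torus knot $T_{p,q}$ with framing $0$ together with a $-n$-framed meridian in general, reducing after blowing down $\sigma$ to $T_{p,q}$ with framing $+1$ when $n=1$) is exactly the one computed in that proof from $\sigma\cdot\sigma=-n$ and $F\cdot F=0$. Since $\partial N = \partial\Phi_B$ and both sides are analyzed with the same complex structure $J_0$, the contact structure on $\partial N$ coincides as a plane field with the Milnor-fiber contact structure on $\Sigma(p,q,npq-1)$. The last sentence is already proved in Theorem~\ref{IxSig}: the compactly supported ambient isotopy of $\int N$ supplied there sends a slightly smaller copy of $N$ onto the Stein domain $N' \subset \int N$, and composing with the collar inclusion of that smaller copy yields an isotopy of $\id_N$ (inside $N$) to an embedding with pseudoconvex image boundary.

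For part (b), given the preassigned compact $K \subset \Phi = \Phi(p,q,npq-1)$, choose the defining radius $R$ of $N$ in the $\psi_1$-chart of $\bar X$ large enough that $K \subset \Phi_B = \bar X \cap B_R$. Take $N_s = \bar X \setminus B_{R+\delta} \subset \int N$ for small $\delta>0$, and apply Theorem~\ref{main-} to produce an ambient isotopy $\phi_t \co \bar X \to \bar X$, $t \in [0,1]$, supported in $\int N$, with $\phi_0=\id$ and $\phi_1(N_s)=N'$ a Stein domain. Since $\phi_t$ fixes $\Phi_B$ pointwise, the pull-back family $J_t = \phi_t^* J_0$ on $\bar X$ restricts to a real $1$-parameter deformation of complex structures on $\Phi$ that all agree with the original Stein $J_0$ on $K$. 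Under $\phi_1$, the compact sublevel set $(\{\rho \le R+\delta\} \cap \Phi, J_1)$ is biholomorphic to $(\bar X \setminus \int N', J_0)$, a compact complex surface whose outward boundary $\partial N'$ is pseudoconcave (opposite to the pseudoconvex boundary orientation of the Stein $N'$) and, with that orientation, diffeomorphic to $+\Sigma(p,q,npq-1)$. This supplies the pseudoconcave filling and simultaneously shows that the specific level set $\Sigma_{R+\delta}$ is pseudoconcave in its natural outward orientation under $J_1$.

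To upgrade this to pseudoconcavity of $\Sigma_r$ for all sufficiently large $r$, I would arrange the isotopy $\phi_t$ more carefully so that $\phi_1$ pulls back the plurisubharmonic exhaustion $\psi$ of $N'$ to a function of $\rho$ in a collar of $\partial N_s$. Its level sets would then coincide with $\rho$-level sets there, and the pseudoconvex orientation of those level sets with respect to $\psi \circ \phi_1$ translates, after reversing, into pseudoconcavity in the natural orientation of $\rho$. The main obstacle is precisely this alignment: it requires choosing Eliashberg's PSH collar on $N'$ compatibly with the radial function and engineering the compactly supported isotopy to realize the compatibility. One then extends to all sufficiently large $r$ by iterating with a sequence of nested shrinkings $N_s^{(k)}$ whose boundaries exhaust a sequence $r_k \to \infty$, patching the resulting isotopies coherently so that a single $J_1$ exhibits pseudoconcavity on every level set beyond $R+\delta$.
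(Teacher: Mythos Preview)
Your argument for part~(a) is correct and is precisely what the paper intends: the corollary is marked with \qed\ and introduced by ``The proof actually gives us much more information,'' so it is meant to be read off directly from the construction of $N$, $N'$, and the isotopy in the proof of Theorem~\ref{IxSig}. The same applies to the first two paragraphs of your treatment of part~(b): the pullback family $J_t=\phi_t^*J_0$ gives a genuine one-parameter deformation of integrable complex structures on $\Phi$ fixing the preassigned compact set, and under $J_1$ the level set $\Sigma_{R+\delta}=\partial N_s$ is pseudoconcave (since $(N_s,J_1)\cong(N',J_0)$ is Stein). That already yields the pseudoconcave filling of $+\Sigma(p,q,npq-1)$ promised in the parenthetical.

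The gap is in your upgrade to pseudoconcavity of $\Sigma_r$ for \emph{all} sufficiently large $r$. The alignment you propose---arranging $\psi\circ\phi_1$ to be a function of $\rho$ near $\partial N_s$---only controls a \emph{compact} collar, so at best you get $r\in[R+\delta,R_1]$. Your iteration then faces two real obstacles. First, each new isotopy $\phi_t^{(k)}$ has support filling $\int N_{r_{k-1}}$, so it can disturb the pseudoconcavity of $\Sigma_r$ already achieved for $r$ just above $r_{k-1}$; you have not explained how to protect the overlap. Second, the plurisubharmonic exhaustion $\psi$ of any Stein $N'$ has critical points (the handle cores), so its regular level sets cannot foliate a full punctured neighborhood of $\sigma\cup F$; hence no single Stein target can absorb all $\Sigma_r$ as $\psi$-levels. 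A cleaner route is to abandon diffeomorphisms of $\bar X$ and work on $\Phi$ alone: build a single diffeotopy of $\Phi$, equal to the identity on $\{\rho\le R\}$, that maps the infinite end $\{\rho>R_0\}$ onto a half-open collar of $\partial N'$ inside $N'\cap\Phi$, matching $\rho$-levels to $\psi$-levels there. This is where the noncompactness of $\Phi$ helps rather than hurts, but you still need to verify that such a collar avoids $\sigma\cup F$ (which need not lie in $N'$, since the ambient isotopy moves them) and that the map extends to a diffeotopy of all of $\Phi$. The paper, for its part, gives no argument at all for this clause, so your sketch is already more than it offers---but as written it does not close the gap.
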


\begin{cor}\label{sigma}
For $p,q,n$ as in Theorem~\ref{IxSig}, the Milnor fiber $\Phi=\Phi(p,q,npq-1)$ and its contact boundary satisfy:
\item[a)] $\theta(\partial \Phi)=(p-1)(q-1)(4-n(pq-p-q-1))-2$
\item[b)]  $\sigma(\Phi)=-n(p^2-1)(q^2-1)/3$.
\end{cor}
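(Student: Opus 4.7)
The plan is to tackle part (b) first; part (a) then follows from a short computation. For (b), I would work with the closed complex surface $\bar X$ constructed in the proof of Theorem~\ref{IxSig}, which decomposes as $\bar X = \Phi \cup_Y N$ along the homology $3$-sphere $Y = \Sigma(p,q,npq-1)$. Novikov additivity then gives $\sigma(\bar X) = \sigma(\Phi) + \sigma(N)$. The handlebody $N$ was identified in that proof as $B^4$ with two $2$-handles attached along $T_{p,q}$ (framing $0$) and a meridian (framing $-n$); its linking matrix $\left(\begin{smallmatrix}0 & 1 \\ 1 & -n\end{smallmatrix}\right)$ has determinant $-1$ and signature $0$. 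So it suffices to show $\sigma(\bar X) = -n(p^2-1)(q^2-1)/3$.

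To compute $\sigma(\bar X)$, I would invoke the Hirzebruch signature formula for a $p$-fold cyclic branched cover with smooth branch locus $B \subset X$, namely $\sigma(\tilde X) = p\,\sigma(X) - \frac{p^2-1}{3p}[B]^2$. In our setting $\bar X$ is such a cover of the rational ruled surface $S_{np}$ branched over $B = C \cup C_\infty$, so $\sigma(S_{np}) = 0$. Since $C \subset L_{np}$ is compact, it is disjoint from $C_\infty$, giving $C \cdot C_\infty = 0$; and $C_\infty^2 = -np$ by construction. The remaining task is to pin down $C^2$. Writing $[C] = a[Z] + b[F']$ in $H_2(S_{np})$, where $Z$ is the zero section of $L_{np}$ (so $Z^2 = np$, $Z \cdot C_\infty = 0$, $Z \cdot F' = 1$, $F'^2 = 0$), the conditions $C \cdot F' = q$ (a generic fiber meets $C$ in the $q$ roots of $(\epsilon z_0^{npq-1}-1)z_0$) and $C \cdot C_\infty = 0$ force $a = q$, $b = 0$. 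Hence $C^2 = npq^2$ and $[B]^2 = np(q^2-1)$, yielding the stated value of $\sigma(\bar X)$.

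For (a), both $T\C^3|_\Phi$ and the normal bundle of $\Phi$ in $\C^3$ are trivial, so $c_1(\Phi) = 0$, and evaluating $\theta$ on $\Phi$ itself gives $\theta(\partial \Phi) = -2\chi(\Phi) - 3\sigma(\Phi)$. Since $\Phi$ is homotopy equivalent to a wedge of $2$-spheres, $\chi(\Phi) = 1 + b_2(\Phi) = 1 + (p-1)(q-1)(npq-2)$ by the formula from \cite{B} cited in the proof of Proposition~\ref{theta}, and substituting the value from (b) reduces (a) to routine polynomial simplification. The main obstacle I expect is invoking the Hirzebruch cyclic-cover signature formula and verifying its hypotheses; smoothness of $C$ and $C_\infty$ was established in the proof of Theorem~\ref{IxSig}, and their disjointness follows from compactness of $C \subset L_{np}$. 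Once $[C] = q[Z]$ is nailed down, the rest is arithmetic.
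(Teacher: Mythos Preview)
Your proof is correct, but the route differs from the paper's. The paper proves (a) first: using the values $\langle c_1(\bar X),\sigma\rangle=2-n$ and $\langle c_1(\bar X),F\rangle=2-2l$ already obtained in Theorem~\ref{IxSig}, it writes down the Poincar\'e dual of $c_1(N)$ in terms of the basis $\sigma,F$ and reads off $c_1^2(N)=(2-2l)(4-2nl)$, then computes $\theta(\partial\Phi)=-\theta(\partial N)=-c_1^2(N)+2\chi(N)+3\sigma(N)$. Part (b) is then extracted from (a) via $3\sigma(\Phi)=-2\chi(\Phi)-\theta(\partial\Phi)$. The paper also notes an alternative for (b), observing that $c_1^2(\bar X)=c_1^2(N)$ and invoking $c_1^2=2\chi+3\sigma$ for the closed surface $\bar X$. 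Your argument instead computes $\sigma(\bar X)$ directly from the branched-cover description via Hirzebruch's formula, then recovers (a) from (b). The paper's approach is entirely self-contained (it uses nothing beyond the adjunction values already recorded in Theorem~\ref{IxSig} and elementary linear algebra on $H_2(N)$), while yours imports the cyclic-cover signature formula but makes more direct use of the geometry of $\bar\pi\colon\bar X\to S_{np}$ and bypasses the $c_1^2(N)$ computation. Both are clean; the paper's has the virtue of needing no outside input.

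One small point worth recording: to apply Hirzebruch's formula as you state it, you need the branching over each component of $B$ to have full ramification index $p$ (local model $z\mapsto z^p$). This is immediate over $C$ from the defining equation, and over $C_\infty$ it follows because $\bar\pi^{-1}(C_\infty)=\sigma$ is a single section mapping diffeomorphically to $C_\infty$.
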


\begin{proof} In the proof of Theorem~\ref{IxSig}, we evaluated $c_1(\bar X)$ on the surfaces $\sigma, F$ giving a basis for $H_2(N;\Z)$. Restricting to $N$, we see that the Poincar\'e dual of $c_1(N)$ is given by $(2-2l)\sigma+(2+n(1-2l))F$ (since the latter class pairs correctly with $\sigma, F$). Thus, $c_1^2(N)=(2-2l)(4-2nl)$. Since $\partial N= \partial \Phi$ as contact manifolds, but the boundary orientations are opposite, the invariant discussed in the proof of Proposition~\ref{theta} satisfies $\theta(\partial \Phi)=-\theta(\partial N)=-c_1^2(N)+2\chi(N)+3\sigma(N)=-c_1^2(N)+2\cdot 3+3\cdot 0 =4l(2-n(l-1))-2$, which gives (a) after the substitution $2l=(p-1)(q-1)$. For (b), note (as in the proof of Proposition~\ref{theta}) that $c_1(\Phi)=0$ and $\chi(\Phi)=1+2l(npq-2)$. Thus, $3\sigma(\Phi)=c_1^2(\Phi)-2\chi(\Phi)-\theta(\partial\Phi)
=-4nl(pq-l+1)$, giving the required formula.

A slightly more direct approach to (b) is to observe that gluing $N$ to $\Phi$ increases $\chi$ of the latter by 3 while fixing $\sigma$, and the above formula for $c_1^2(N)$ also gives $c_1^2(\bar X)$ since $c_1(\Phi)=0$ and $\partial N$ is a homology sphere, so $\sigma(\Phi)=\sigma(\bar X)$ can be computed from the formula $c_1^2(\bar X)=2\chi(\bar X)+3\sigma(\bar X)$.
\end{proof}

\begin{remark} When $(p,q)=(2,3)$, we recover the elliptic surface $\bar X=E(n)$ decomposed into its {\em nucleus} $N$ and the Milnor fiber $\Phi(2,3,6n-1)$, as originally presented topologically in \cite{Nuc}. For an alternate holomorphic presentation of this case, see \cite{EO}, which also implicitly suggests how to generalize Theorem~\ref{IxSig} and its corollaries to other algebraic singularities such as $(2,3,6n+1)$. ($N$ is replaced by a plumbing with larger $b_2$.)  For $(2,3,6n-1)$ with $n>1$, the proof of Theorem~\ref{IxSig} realizes both pieces of the elliptic surface as Stein domains, separated by an embedding of $I\times\Sigma(2,3,6n-1)$ with pseudoconcave boundary. One can obtain other elliptic surfaces from $E(n)$ by logarithmic transformation on regular fibers, which we can assume lie in $N$. It is natural to ask if these elliptic surfaces have analogous decompositions along  $I\times\Sigma(2,3,6n-1)$. Since logarithmic transformation in $N$ preserves the holomorphic structure of the Milnor fiber on an arbitrarily large compact subset, convexity of that side is preserved. However, Yasui~\cite{Y} has recently shown that after one or two nontrivial logarithmic transformations, $N$  can never be realized as a Stein domain.
\end{remark}


\section{Akbulut corks} \label{Cork}

This section shows how various published theorems can be sharpened to include pseudoconvexity or pseudoconcavity as conclusions. The proofs show how Theorem~\ref{main-} can be applied in situations where there may be 2-torsion complicating the classification of almost-complex structures. A good source of examples is Akbulut and Matveyev \cite{AM}. That paper proved that every closed, oriented 4-manifold $X$ can be decomposed as $\tilde X_1\cup_\partial \tilde X_2$ so that $\tilde X_1$ and $-\tilde X_2$ both abstractly admit the structure of Stein domains, with control over the algebraic topology of the decomposition. They showed that if $X$ is simply connected then $\tilde X_2$ can be taken to be contractible. Moreover, they showed that such a description can always be arranged in the construction of {\em Akbulut corks}. If $Z_1$ and $Z_2$ are closed, simply connected 4-manifolds with the same intersection form, then they are necessarily related by a {\em cork twist} --- removing a compact, contractible 4-manifold (a {\em cork}) from $Z_1$ and regluing it by a diffeomorphism of the boundary to obtain $Z_2$. (The first example of this was discovered by Akbulut \cite{A}; the general case was proved in \cite{CFHS}, \cite{M}.) Akbulut and Matveyev showed that the cork and its complement can always be chosen so that both pieces (suitably oriented) abstractly admit the structure of Stein domains. We now show that the Stein domain structures of \cite{AM} can all be constructed ambiently. That is, whenever the 4-manifold $X$ is a complex surface, we can assume that $\tilde X_1\subset X$ is a holomorphically embedded Stein domain, so $\tilde X_2$ has pseudoconcave boundary. In particular, corks in complex surfaces can always be chosen to have pseudoconcave boundary. As another example, we sharpen a knotting theorem of Akbulut and Yasui \cite{AY} by arranging the knotted objects to be pseudoconcave when the ambient space is complex.

We begin with the main theorem of \cite{AM}.

\begin{thm} \label{AM} \cite{AM}. Let $X=X_1\cup_\partial X_2$ be a closed, oriented 4-manifold, decomposed into a pair of handlebodies without 3- or 4-handles. Then there is another such decomposition  $X=\tilde X_1\cup_\partial \tilde X_2$ such that $\tilde X_1$ and $-\tilde X_2$ are Stein handlebodies, and each $\tilde X_i$ has a homotopy equivalence to $X_i$ preserving the intersection pairing.
\end{thm}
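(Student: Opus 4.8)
The plan is to feed the given handlebody decomposition $X = X_1 \cup_\partial X_2$ into Eliashberg's construction in the only two ways it can fail, and then repair those failures by a Kirby-calculus bookkeeping argument that trades handles between the two sides. Starting from $X_1$ (no $3$- or $4$-handles, all indices $\le 2$), the obstructions to making $X_1$ Stein are: (i) there may be too many $2$-handles relative to what a Stein handlebody of the right homotopy type can carry — but here this is not an issue since we do not insist on a particular handle count; (ii) the $2$-handles may be attached along circles whose Legendrian representatives do not have large enough Thurston--Bennequin invariant to match the required framings; (iii) likewise the rotation numbers may not match the Chern data we need. The standard fix for (ii)--(iii), following \cite{AM}, is to first stabilize: add cancelling $1$--$2$ handle pairs to $X_1$, isotope the attaching link to be Legendrian, and then add zig-zags to drive $tb$ down. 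Lowering $tb$ is always possible; the price is that each zig-zag changes a framing coefficient, so one must add further $1$-handles (or handle slides) to restore the correct framings while keeping the attaching circles Legendrian. This is exactly the delicate part of \cite{AM}, and I would simply quote their combinatorial lemma that such a stabilization of $X_1$ to a Stein handlebody $\tilde X_1$ can be done with complete control of $H_*$ and the intersection form.

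The key point that makes this a \emph{decomposition} rather than just an abstract statement is that every handle we add to $X_1$ must be removed from $X_2$, and we need the resulting $\tilde X_2$ to be, with reversed orientation, \emph{also} a Stein handlebody. Here I would use Poincaré duality on the closed manifold $X$: turning $X$ upside down, a handle decomposition of $X_2$ with $3$- and $4$-handles becomes (after the standard trick of cancelling the $4$-handle against a $0$-handle and reading the dual decomposition) a handlebody $-\tilde X_2$ built from $X_2$'s co-attaching data, again with indices $\le 2$. So the game is symmetric: add cancelling handle pairs and zig-zags to \emph{both} $X_1$ (built right-side-up) and $-X_2$ (built upside-down) so that the two handle structures are compatible along the common boundary $\partial \tilde X_1 = \partial(-\tilde X_2)$ and both satisfy the Thurston--Bennequin and rotation-number conditions of Eliashberg's theorem. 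The homotopy equivalences $\tilde X_i \simeq X_i$ preserving the intersection pairing come for free: each modification is either a cancelling pair (which does not change homotopy type or form) or a handle slide / Legendrian isotopy (likewise), so one tracks the form through the finitely many moves.

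Concretely, the steps in order are: (1) Put $X_1$ and $-X_2$ in handlebody form with indices $\le 2$, using Poincaré duality for the latter, and arrange their boundaries to match. (2) Isotope all $2$-handle attaching links to be Legendrian in the canonical contact structures on $\#^k(S^1\times S^2)$ determined by the $0$- and $1$-handles. (3) On each side, add zig-zags and cancelling $1$--$2$ pairs to lower $tb$ until every $2$-handle framing equals $tb - 1$; adjust by handle slides so the two sides still glue. (4) Further stabilize to arrange the rotation numbers so that the resulting Chern classes on $\tilde X_1$ and $-\tilde X_2$ are the ones forced by the required homotopy equivalences $\tilde X_i \simeq X_i$ (this uses that the only constraint, given no $2$-torsion worries are claimed here, is matching $c_1$; where $2$-torsion intervenes one appeals to the spin$^{\mathbb C}$ discussion following Theorem~\ref{main-}). (5) Invoke Eliashberg's theorem to conclude $\tilde X_1$ and $-\tilde X_2$ are Stein handlebodies, and read off the preservation of the intersection pairings from the move-by-move bookkeeping.

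\medskip

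\noindent\textbf{Main obstacle.} The genuinely hard step is (3)--(4) done \emph{simultaneously on both sides}: lowering $tb$ on the $X_1$ side introduces new handles that must be subtracted from $X_2$, and those subtractions change the handle structure of $-X_2$ in a way that may ruin its own $tb$/framing matching, so the two stabilization processes are coupled and must be iterated to a fixed point. This coupled combinatorial argument is precisely the content of \cite{AM}, and I would cite it rather than reprove it; everything else (the Poincaré-duality reduction, the homotopy-equivalence bookkeeping, and the final appeal to Eliashberg) is routine by comparison.
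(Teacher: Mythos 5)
This theorem is quoted from Akbulut--Matveyev \cite{AM}; the paper gives no proof of it, so there is no ``paper's own proof'' to compare against.  What the paper does provide, in the proof of Theorem~\ref{AMpsc}, is a summary of the mechanism in \cite{AM}: one performs a sequence of operations that add a $2$-handle to one side of the splitting while removing it from the other, the net effect being to introduce algebraically canceling $1$--$2$ pairs on each $X_i$ and then \emph{divert the attaching circles of the original $2$-handles over the new $1$-handles}.  That diversion is the whole point, and your sketch misses it.

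The concrete gap is in your step (3).  Adding zig-zags to a Legendrian attaching circle only \emph{lowers} $tb$, so it only lets you realize framings that are \emph{at most} $tb_{\max}(K)-1$.  If the $4$-manifold forces a framing larger than that (e.g.\ a $0$-framed unknot already fails, since $tb_{\max}=-1$), zig-zags cannot help, and neither can Legendrian isotopy within the given knot class.  The way \cite{AM} escapes this constraint is precisely by running the attaching circle over a new $1$-handle, which changes the knot type in $\#^k(S^1\times S^2)$ (while keeping the relative homology class, hence the intersection form, controlled) and makes higher values of $tb$ accessible; this is the same device as the $W^+(p)$-modifications that appear later in Section~\ref{Cork}.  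Relatedly, your assertion that ``each zig-zag changes a framing coefficient'' is not correct: the framing coefficient is fixed by the embedded $2$-handle in $X$, and a zig-zag changes $tb$, not the framing.  What one actually does is \emph{choose} the new (diverted) $2$-handle so that its framing equals $tb-1$ of the Legendrian circle one has arranged, using the flexibility described in the proof of Theorem~\ref{AMpsc}.  Finally, the Poincar\'e duality step is a red herring: both $X_1$ and $X_2$ are already hypothesized to be handlebodies with indices $\le 2$, and orientation reversal does not change handle indices, so $-X_2$ already has the required form without turning anything upside down.  Your identification of the two-sided coupling as the crux, and your decision to cite \cite{AM} for it, is reasonable; but as a sketch of the mechanism, ``lower $tb$ by zig-zags'' is the wrong move where the actual work happens.
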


\noindent The homotopy equivalences are not pairwise on  $(\tilde X_i,\partial\tilde X_i)$, since the fundamental group of the boundary changes in general.

\begin{thm} \label{AMpsc} If the above $X$ is complex, then we can assume $\tilde X_1$ is holomorphically embedded in $X$. If $-X$ is also complex, then after an isotopy of its complex structure, we can also assume $-\tilde X_2$ is holomorphically embedded in $-X$.
\end{thm}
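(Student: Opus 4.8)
The plan is to apply Theorem~\ref{main-} to the Stein handlebody structures provided by Theorem~\ref{AM}, handling the two embeddings ($\tilde X_1\subset X$ and $-\tilde X_2\subset -X$) and taking care of the possible $2$-torsion in $H^2(\tilde X_i;\Z)$ that obstructs reading off the homotopy class of an almost-complex structure from orientation and $c_1$ alone.

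First I would treat $\tilde X_1$. By Theorem~\ref{AM}, $X=\tilde X_1\cup_\partial\tilde X_2$ with $\tilde X_1$ a Stein handlebody $(H,J_0)$ (no $3$- or $4$-handles, indices $\le 2$). We have a smooth embedding $\tilde X_1\hookrightarrow X$, which endows $\tilde X_1$ with the complex structure $J$ inherited from the complex surface $X$. To invoke Theorem~\ref{main-} we must check that $J$ is homotopic through almost-complex structures to $J_0$. Both are compatible with the orientation (the embedding is orientation-preserving, and $X$'s complex orientation restricts correctly). The obstruction to a homotopy lives in $H^2(\tilde X_1;\Z)$: modulo $2$-torsion it is half the difference $c_1(\tilde X_1,J)-c_1(\tilde X_1,J_0)$, and since $\tilde X_1$ has the homotopy type of a $2$-complex there are no higher obstructions. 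The key point is that $J_0$, being a Stein handlebody structure, is flexible: I can modify the Legendrian link diagram without changing the underlying smooth handlebody by adding pairs of zig-zags to the attaching circles, which changes the rotation numbers (hence $c_1$) while fixing the Thurston--Bennequin invariants (hence the framings). This lets me realize any class of the form $c_1(\tilde X_1,J_0)+2\beta$ for $\beta$ in the image of the relevant cochain group — and because the difference $c_1(\tilde X_1,J)-c_1(\tilde X_1,J_0)$ is automatically even (both structures are almost-complex on the same oriented $4$-manifold), I can choose a Stein handlebody structure $J_0'$ on the same smooth $\tilde X_1$ with $c_1(\tilde X_1,J_0')=c_1(\tilde X_1,J)$. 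If $H^2(\tilde X_1;\Z)$ has no $2$-torsion this already forces $J_0'\simeq J$; if there is $2$-torsion I argue as in Section~\ref{Cork}'s other applications that the residual torsion ambiguity in the homotopy class can likewise be absorbed by the zig-zag freedom (the torsion part of the difference is again realizable by adjusting rotation numbers, since the zig-zag moves surject onto the relevant cocycles). With $J_0'\simeq J$, Theorem~\ref{main-} provides a compactly supported ambient isotopy of $X$ after which the induced complex structure makes $\tilde X_1$ a holomorphically embedded Stein handlebody with the diagram of $(H,J_0')$. Since this is an ambient isotopy, the complement $\tilde X_2$ retains its diffeomorphism type, and the homotopy equivalences of Theorem~\ref{AM} are unaffected.

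Next, the symmetric statement for $-\tilde X_2$ when $-X$ is also complex: here $-\tilde X_2$ is itself a Stein handlebody by Theorem~\ref{AM}, sitting inside the complex surface $-X$. I run the identical argument with $-X$ in place of $X$ and $-\tilde X_2$ in place of $\tilde X_1$: adjust the Legendrian diagram of $-\tilde X_2$ by zig-zags so that its Chern class matches that induced by the embedding into $-X$ (and absorb any $2$-torsion discrepancy the same way), then apply Theorem~\ref{main-} to get an ambient isotopy of $-X$ making $-\tilde X_2$ holomorphically embedded. The statement as phrased says ``after an isotopy of its complex structure'' precisely because this second application moves the complex structure on $-X$ (equivalently, performs an ambient isotopy); it is compatible with the first application because the two assertions concern the two different complex surfaces $X$ and $-X$, so there is no interference. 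Thus $\tilde X_1$ is holomorphically embedded in $X$ and $-\tilde X_2$ in $-X$, which is the claim.

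The main obstacle is the $2$-torsion bookkeeping: away from torsion the homotopy class of an almost-complex structure on an oriented $4$-manifold with the homotopy type of a $2$-complex is determined by $c_1$, and the zig-zag moves on a Legendrian diagram change $c_1$ by exactly the cocycle assigning $\pm1$ to each $2$-handle, so matching Chern classes is routine. When $H^2$ has $2$-torsion one must check that the full homotopy class (a spin$^\C$ structure refined by the mod-$2$ data) — not merely $c_1$ — can be matched by rechoosing the Legendrian diagram; this is exactly the kind of argument the author flags as the point of Section~\ref{Cork}, and it is where the proof needs the most care. Everything else (orientation compatibility, the $2$-complex homotopy type of Stein handlebodies killing higher obstructions, preservation of the homotopy equivalences under ambient isotopy) is standard once Theorem~\ref{main-} is available.
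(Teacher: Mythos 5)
Your proposal follows the paper's strategy — apply Theorem~\ref{main-} after using the zig-zag flexibility of the Legendrian diagrams from Theorem~\ref{AM} to match the almost-complex structure — but where the proposal becomes vague is exactly where the paper's proof is cleverest, and the difference is worth flagging.

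On the 2-torsion point: you correctly identify that matching $c_1$ alone is not enough, and you want to argue that flipping zig-zags surjects onto the full set of twists in $H^2(\tilde X_1;\Z)$ (not merely onto classes of the form $2\alpha$). This is in fact true — a single zig-zag flip on a 2-handle $h_i$ twists the spin$^\C$ structure by $\pm[h_i]^*$, and these cochains generate $C^2$, which surjects onto $H^2$ — but your write-up defers to ``Section~\ref{Cork}'s other applications,'' which is circular since this theorem is the first result of that section. The paper instead sidesteps torsion entirely by working \emph{relatively}: it first homotopes $J_X$ to agree with the Stein structure $\tilde J$ over the union $H$ of $0$- and $1$-handles (possible because the space of compatible complex structures on $\R^4$ is homotopy equivalent to $S^2$, which is simply connected), and then compares \emph{relative} Chern numbers living in $\hom(H_2(X,H);\Z)$. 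Since $H$ is a $1$-complex, $H_2(X,H)$ is free on the 2-handles, so there is no torsion and the comparison is just an integer $c(J)-r$ per handle, which is automatically even (both reduce mod $2$ to the relative Stiefel--Whitney number) and is killed by $\tfrac12|c(J)-r|$ zig-zag flips. Your absolute-spin$^\C$ route is valid if spelled out, but the relative route is tighter and is what makes the 2-torsion disappear rather than needing to be absorbed.

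One more small point: you treat $\tilde X_1\subset X$ and $-\tilde X_2\subset -X$ sequentially and assert ``no interference'' because they concern different complex surfaces, but $X$ and $-X$ are the same underlying smooth manifold, so an ambient isotopy of one is an ambient isotopy of the other and does move the complement. The resolution you ultimately invoke — reinterpret the second ambient isotopy as an isotopy of the complex structure on $-X$ rather than of the submanifold — is indeed the right one and is exactly what the paper does, but the justification should be the reinterpretation itself, not the two surfaces being ``different.'' The paper makes the zig-zag adjustments on $\tilde X_1$ and $-\tilde X_2$ simultaneously and then applies this reinterpretation once, which is slightly more economical.
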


\noindent Of course, it is somewhat rare for both $X$ and $-X$ to admit complex structures. However, many examples can be obtained by considering bundles over surfaces, or elliptic fibrations with only smooth (possibly multiple) fibers.

\begin{proof} Akbulut and Matveyev construct the submanifolds $\tilde X_i$ by a sequence of operations adding a 2-handle to some $X_j$ while removing it from its complement in $X$. The net effect is to introduce some algebraically canceling 1-2 handle pairs onto each $X_i$, then divert the attaching circles of some original 2-handles over new 1-handles. The key point is that each 2-handle of each of the new handlebodies $\tilde X_i$ can be chosen flexibly. Each 2-handle is chosen from an infinite collection of possible 2-handles, whose attaching circles are all homotopic in the boundary of the union $H$ of 0- and 1-handles of $\tilde X_i$, and whose homology classes in $H_2(X,H)$ are equal. Choosing the 2-handles suitably allows both $\tilde X_1$ and $-\tilde X_2$ to be represented by Stein handle diagrams as required. Closer inspection shows that one can independently choose each Legendrian attaching circle of each Stein surface from an infinite family with fixed rotation number but arbitrarily many extra zig-zags in both directions.

To apply Theorem~\ref{main-}, we must arrange the inclusion $\tilde X_1\subset X$ to preserve the homotopy class of the complex structure. Since the space of complex vector space structures on $\R^4$ is simply connected (homotopy equivalent to $S^2$), we can homotope the complex structure $J$ on $X$ to agree with the given Stein structure $\tilde J$ on $\tilde X_1$ near $H$. Let $\tau$ be the standard complex trivialization of $\tilde J|H$ (given by the constant framing in the Legendrian link diagram). We obtain a relative Chern class $c_1(J,\tau)\in H^2(X,H)$. Each 2-handle $h$ of $\tilde X_1$ then has a relative Chern number $c(J)$ for $J$ that only depends on the relative homology class of $h$ in $H_2(X,H)$. Similarly, $h$ has a relative Chern number for the Stein structure $\tilde J$, which is the rotation number $r$ of its attaching circle. In either case, the Chern number reduces mod 2 to the relative Stiefel-Whitney number, which is independent of choice of almost-complex structure, so $r$ is congruent to $c(J)$ mod 2. If we use the above flexibility to choose $h$ differently, its homology class, $c(J)$ and $r$ will not change. However, we can assume there are $\frac12|c(J)-r|$ extra zig-zags that can be flipped over to change $r$ to equal $c_1(J)$. After this change, the relative Chern numbers of $J$ and $\tilde J$ agree on $h$, so the two structures are homotopic rel $H$ over $h$. Applying this procedure to each 2-handle of $\tilde X_1$, we conclude that $J$ and $\tilde J$ are homotopic on $\tilde X_1$. By Theorem~\ref{main-}, we can now isotope $\tilde X_1$ to be a holomorphically embedded Stein handlebody in $X$. Similarly, if $-X$ is complex, we can simultaneously invoke flexibility of the handles in $-\tilde X_2$ to isotope the latter to a Stein handlebody in $-X$. Reinterpreting this as an isotopy of the complex structure rather than $-\tilde X_2$ completes the proof.
\end{proof}

\begin{cor} Every closed, simply connected complex surface $X$ contains a compact, contractible submanifold $A$ with pseudoconcave boundary. We can assume $-A$ admits a Stein domain structure, and if $-X$ is also complex, we can assume after isotopy of its complex structure that $-A$ is pseudoconvex in it.
\end{cor}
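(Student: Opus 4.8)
The plan is to deduce this corollary directly from Theorem~\ref{AMpsc} by specializing the Akbulut--Matveyev decomposition to the case where the complement piece is contractible. First I would apply Theorem~\ref{AM} (or rather its ambient refinement, Theorem~\ref{AMpsc}) to a handle decomposition $X = X_1 \cup_\partial X_2$ in which $X_2$ has no 1-, 3-, or 4-handles, so that $X_2$ is a 4-ball (such a decomposition exists since $X$ is simply connected: put all the handles of index $\le 2$ into $X_1$ and turn the remaining handlebody, built from 2-, 3-, and 4-handles, upside down --- the dual decomposition then has $X_2$ containing only the dual of the 4-handle, a 0-handle). Actually, more care is needed to guarantee $\tilde X_2$ is \emph{contractible}: one must use the stronger conclusion of \cite{AM}, already quoted above, that when $X$ is simply connected the piece $\tilde X_2$ can be taken to be contractible. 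So I would invoke that version of the theorem together with the ambient enhancement of Theorem~\ref{AMpsc}.

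The key steps, in order, are: (1) Since $X$ is closed and simply connected, \cite{AM} gives a decomposition $X = \tilde X_1 \cup_\partial \tilde X_2$ with $\tilde X_2$ compact and contractible, $\tilde X_1$ and $-\tilde X_2$ both Stein handlebodies. (2) Set $A = \tilde X_2$; it is the desired compact, contractible submanifold. (3) By Theorem~\ref{AMpsc}, since $X$ is complex we may assume $\tilde X_1 \subset X$ is holomorphically embedded, i.e.\ $\tilde X_1$ is a Stein domain in $X$; its boundary $\partial \tilde X_1 = \partial A$ is then pseudoconvex as the boundary of $\tilde X_1$, hence pseudoconcave as the boundary of $A = \tilde X_2$ (with the boundary orientation $A$ inherits from $X$, which is opposite that of $\tilde X_1$). (4) The statement that $-A$ admits a Stein domain structure is immediate: $-\tilde X_2$ is a Stein handlebody by \cite{AM}. (5) Finally, if $-X$ is also complex, the last clause of Theorem~\ref{AMpsc} lets us isotope the complex structure on $-X$ so that $-\tilde X_2 = -A$ is holomorphically embedded in $-X$, i.e.\ $-A$ is pseudoconvex in (the new complex structure on) $-X$.

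I expect the only real subtlety to be bookkeeping about orientations and about which version of the Akbulut--Matveyev theorem to cite: Theorem~\ref{AM} as literally stated in the excerpt does not mention contractibility of $\tilde X_2$, so one must explicitly appeal to the simply connected refinement described in the surrounding text (``They showed that if $X$ is simply connected then $\tilde X_2$ can be taken to be contractible''), and check that the ambient argument of Theorem~\ref{AMpsc}'s proof --- which only used the flexibility of the 2-handles of the two pieces and applied Theorem~\ref{main-} --- goes through verbatim in this special case. It does, since contractibility of $\tilde X_2$ is not used anywhere in that proof; it is merely an extra feature of the chosen decomposition. No genuinely new argument is required; the corollary is a direct combination of Theorem~\ref{AMpsc} with the simply connected case of \cite{AM}.

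\begin{proof}
Since $X$ is closed and simply connected, we may choose a handle decomposition $X = X_1 \cup_\partial X_2$ with each $X_i$ a handlebody without 3- or 4-handles, in which $X_2$ is a 4-ball. Akbulut and Matveyev's theorem \cite{AM} (Theorem~\ref{AM}, together with its simply connected refinement) then produces a decomposition $X = \tilde X_1 \cup_\partial \tilde X_2$ with $\tilde X_1$ and $-\tilde X_2$ Stein handlebodies and $\tilde X_2$ compact and contractible. Set $A = \tilde X_2$. Since $X$ is complex, Theorem~\ref{AMpsc} allows us to isotope $\tilde X_1$ so that it is holomorphically embedded in $X$, hence a Stein domain with pseudoconvex boundary $\partial \tilde X_1 = \partial A$. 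With the boundary orientation $A$ inherits from $X$, which is opposite to that of $\tilde X_1$, this boundary is pseudoconcave, so $A$ has pseudoconcave boundary. That $-A = -\tilde X_2$ admits a Stein domain structure is part of the conclusion of \cite{AM}. Finally, if $-X$ is also complex, the last assertion of Theorem~\ref{AMpsc} lets us isotope the complex structure on $-X$ so that $-\tilde X_2 = -A$ is holomorphically embedded, hence pseudoconvex, in $-X$.
\end{proof}
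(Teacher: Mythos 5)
Your proof follows essentially the same route as the paper: invoke the simply connected refinement of Akbulut--Matveyev to get a decomposition $X = \tilde X_1 \cup_\partial \tilde X_2$ with $\tilde X_2$ contractible and both pieces Stein handlebodies, set $A = \tilde X_2$, then apply Theorem~\ref{AMpsc}. That is exactly what the paper does, and the final paragraph of your argument is correct.

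However, the preparatory step you insert --- choosing a handle decomposition $X = X_1 \cup_\partial X_2$ with $X_2$ a 4-ball and each $X_i$ free of 3- and 4-handles --- does not work as stated, and the justification sketched in your plan is wrong. If $X_1$ contains all handles of index $\le 2$, the complementary handlebody has 3- and 4-handles; turning it upside down dualizes the 4-handle to a 0-handle but also dualizes each 3-handle to a 1-handle, so $X_2$ is a 1-handlebody, not a 4-ball. To make $X_2$ literally a 4-ball (i.e., a single 0-handle after dualizing) one would need a handle decomposition of $X$ with no 3-handles at all, and whether every closed simply connected 4-manifold admits such a decomposition is a well-known open problem. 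The paper avoids this entirely by citing the fact, proved in \cite{AM}, that the Stein decomposition $\tilde X_1 \cup_\partial \tilde X_2$ can be constructed directly with $\tilde X_2$ contractible whenever $X$ is simply connected; no special initial decomposition is required. Since you also invoke that refinement, your proof is salvaged by simply deleting the unjustified choice of $X_2$ as a 4-ball --- but as written, that sentence asserts something you cannot prove.
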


\noindent As we have seen (Example~\ref{hsphere}), Theorem~\ref{main-} easily gives pseudoconvex contractible submanifolds of $X$, but pseudoconcavity is harder.

\begin{proof} Akbulut and Matveyev show (without requiring a complex structure) that $X$ decomposes as in Theorem~\ref{AM} with $X_2$ contractible, so $X=\tilde X_1\cup_\partial \tilde X_2$ with $\tilde X_1$ and $-\tilde X_2$ Stein handlebodies, and $A=\tilde X_2$ contractible. The corollary now follows immediately from Theorem~\ref{AMpsc}.
\end{proof}

Now if $Z_1$ and $Z_2$ are closed, oriented, simply connected 4-manifolds with isomorphic intersection pairings, there is a compact, contractible cork $A$ and simply connected complement $Y$ such that $Z_i=Y\cup_{\varphi_i}A$ for $i=1,2$, glued by suitable diffeomorphisms $\varphi_i$ of the boundaries. Akbulut and Matveyev (\cite{AM}~Theorem~5) showed that $Y$ and $-A$ can both be chosen to be Stein handlebodies. We sharpen this in the complex setting:

\begin{thm} \label{cork}
 Suppose that some $Z_i$ is complex. Then
\item[a)] we can assume the corresponding embedded $A\subset Z_i$ has a pseudoconvex boundary. If both $Z_j$ are complex, then the Stein structures on $A$ resulting from the pseudoconvex embeddings are given by the same Legendrian link diagram.
\item[b)]  Alternatively, we can assume $A\subset Z_i$ has a pseudoconcave boundary and Stein complement. If both $Z_j$ are complex, then the resulting Stein structures on $Y$ are given by the same Legendrian link diagram, up to changing the directions of zig-zags. If there is an isomorphism of the intersection forms of $Z_1$ and $Z_2$ that preserves the $Chern$ class, then the Legendrian link diagrams can be assumed to be identical.
\end{thm}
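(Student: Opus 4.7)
The plan is to run the argument of Theorem~\ref{AMpsc} in two variants, depending on which piece of the cork decomposition we want to be the pseudoconvex Stein side. For part (a), invoke \cite{AM} (Theorem~\ref{AM}) with the cork $A$ playing the role of $\tilde X_1$, so that $A$ itself (rather than $-A$) inherits an abstract Stein handlebody structure $H_A$. Since $A$ is contractible, $H^2(A;\Z)=0$ and its orientation alone determines the almost-complex structure up to homotopy, so Corollary~\ref{contractible} isotopes the embedding $A\subset Z_i$ to a holomorphic embedding whose image is a Stein domain, realizing pseudoconvex $\partial A$. When both $Z_j$ are complex, we fix a single structure $H_A$ (one Legendrian diagram) in advance and apply Corollary~\ref{contractible} --- which ultimately invokes Theorem~\ref{main-} and thus preserves the Legendrian diagram --- separately to each embedding; the two resulting pseudoconvex Stein embeddings then share the same diagram by construction.

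For part (b), apply \cite{AM} with $Y$ in the role of $\tilde X_1$ and $A$ in the role of $\tilde X_2$, giving abstract Stein handlebody structures on both $Y$ and $-A$. The aim is to isotope $Y\subset Z_i$ to a holomorphic Stein handlebody, which automatically makes $A=Z_i-\int Y$ have pseudoconcave boundary and Stein complement. The argument now mimics the proof of Theorem~\ref{AMpsc} applied to $Y$: \cite{AM} permits each 2-handle attaching circle of $Y$ to be chosen from an infinite family of Legendrian representatives with a fixed rotation-number parity, and we may include arbitrarily many zig-zags in both directions. Trivializing the ambient $J$ over the 0,1-handle subhandlebody $H\subset Y$ and computing the relative Chern number $c(J)\in\Z$ on each 2-handle $h$, we flip $\tfrac12|c(J)-r|$ zig-zags (which is an integer since $c(J)$ and $r$ reduce to the same relative Stiefel--Whitney class) to arrange $r=c(J)$. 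The abstract Stein structure on $Y$ is then homotopic to $J|_Y$, and Theorem~\ref{main-} yields the required ambient isotopy.

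For the refinements when both $Z_j$ are complex: because $A$ is contractible with integer homology sphere boundary, Mayer--Vietoris gives canonical isomorphisms $H_2(Y;\Z)\cong H_2(Z_i;\Z)$ (and correspondingly for the relative groups that record the 2-handle classes). The underlying Legendrian link produced by \cite{AM} for $Y$ depends only on the intrinsic handle structure of $Y$, not on the gluing $\varphi_i$, so we fix one such link in advance for both $Z_1$ and $Z_2$, padding with canceling pairs of zig-zags if necessary so that the zig-zag budget accommodates the Chern-number matching for each $Z_i$. The two diagrams then coincide up to the directions of zig-zags, which depend on $c_1(Z_i)$. If additionally some isomorphism of intersection forms carries $c_1(Z_1)$ to $c_1(Z_2)$, compose it with the canonical identifications $H_2(Z_j;\Z)\cong H_2(Y;\Z)$ to obtain $c_1(Z_1)|_Y=c_1(Z_2)|_Y$; the zig-zag directions then agree on every 2-handle, making the Legendrian diagrams identical.

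The main obstacle is checking that \cite{AM}'s construction can be organized so that the underlying attaching link for $Y$ is genuinely independent of $Z_i$, with all $Z_i$-dependence confined to the choice of zig-zag directions. This is precisely the flexibility already exploited in the proof of Theorem~\ref{AMpsc}: each Legendrian attaching circle can be chosen from an infinite family with a fixed rotation-number parity, so inserting canceling zig-zag pairs produces a common underlying link with enough slack to achieve $r=c(J)$ for both $Z_1$ and $Z_2$.
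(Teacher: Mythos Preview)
Your overall strategy matches the paper's: use contractibility of $A$ plus Corollary~\ref{contractible} for part (a), and for part (b) rerun the zig-zag adjustment of Theorem~\ref{AMpsc} on $Y$ to match rotation numbers to relative Chern numbers. For (a) the paper phrases it as applying \cite{AM}~Theorem~5 to $-Z_i$ rather than applying Theorem~\ref{AM} with $A$ in the role of $\tilde X_1$; your version is slightly loose about what guarantees that the \emph{same} Stein cork embeds as a cork in both $Z_j$, but this is a minor point.

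There is, however, a genuine gap in the final refinement of (b). You write that if an isomorphism $\alpha$ of intersection forms carries $c_1(Z_1)$ to $c_1(Z_2)$, then composing with the canonical Mayer--Vietoris identifications $\iota_i\colon H_2(Y)\xrightarrow{\cong} H_2(Z_i)$ yields $c_1(Z_1)|_Y=c_1(Z_2)|_Y$. But that conclusion requires $\alpha=\iota_2\circ\iota_1^{-1}$, i.e., that the \emph{given} isomorphism agrees with the one induced by the cork decomposition. In general these differ: $\iota_2^{-1}\circ\alpha\circ\iota_1$ is an automorphism of $H_2(Y)$ carrying $c_1(Z_1)|_Y$ to $c_1(Z_2)|_Y$, but it need not be the identity, and there is no reason it is realized by a self-diffeomorphism of $Y$ respecting the handle structure. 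Since the rotation numbers you are trying to equalize are the values of $c_1|_Y$ on the fixed handle basis, an arbitrary form automorphism does not help.

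The paper closes this gap by going back into the proof of \cite{AM}~Theorem~5: the cork is extracted from an h-cobordism, and one is free to \emph{choose} the h-cobordism so that the induced identification $H_2(Z_1)\cong H_2(Z_2)$ is exactly the given $\alpha$. With that choice the pulled-back Chern classes on $Y'$ genuinely agree, and the remaining steps of \cite{AM} (adding algebraically canceling Legendrian $1$--$2$ pairs inside balls) do not disturb this. Your argument needs this extra step; without it the ``identical diagrams'' conclusion does not follow.
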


\begin{proof} For (a), apply \cite{AM}~Theorem~5 to $-Z_i$, then observe that $A$ has a unique homotopy class of almost-complex structures, so Theorem~\ref{main-} applies. Proving (b) requires a closer look at the proof of \cite{AM}~Theorem~5. The first step is to construct a simply connected Stein domain $Y'$ that embeds in each $Z_i$ as the complement of some contractible set $A'_i$. Since the Stein structure is constructed as in the proof of Theorem~\ref{AM}, we have enough extra zig-zags to arrange both embeddings $Y'\subset Z_i$ to have pseudoconvex boundaries by Theorem~\ref{main-} when the ambient manifolds are complex. The resulting Legendrian link diagrams then differ only by the directions of these zig-zags. If there is an isomorphism as hypothesized, we can assume it commutes with the inclusions $Y'\subset Z'_i$ (since the construction begins with an arbitrary h-cobordism). Then the two pulled-back complex structures on $Y'$ have the same Chern class, so they are homotopic (since $H^2(Y')$ has no 2-torsion) and the Legendrian diagrams will be identical. The remaining steps of the proof in \cite{AM} change $Y'$ by adding algebraically canceling Legendrian 1-2 pairs, then reconstructing a suitable embedding in each $Z_i$ by working within 4-balls in $Z_i$. These steps cause no changes at the level of homology, so the inclusions still preserve the almost-complex structures up to homotopy, and Theorem~\ref{main-} still applies.
\end{proof}

For related applications, we consider the recent work of Akbulut and Yasui \cite{AY}. Their main Theorem~6.3 asserts that any 4-dimensional, compact, oriented handlebody with $b_2\ne0$ and no 3- or 4-handles can be modified by a homotopy equivalence (preserving the intersection pairing but not the fundamental group of the boundary), after which it admits an arbitrarily large finite number of exotic (nondiffeomorphic) smooth structures, each of which can be realized as a Stein domain. Similarly, they obtain infinitely many such structures when the modified manifold is allowed to be open. In each case, the exotic structures are all made from a single manifold, by twisting on one of a family  of disjointly embedded corks. Since each of their corks explicitly admits an abstract Stein structure, Corollary \ref{contractible} immediately shows that the corks can all be taken to be pseudoconvex domains both before and after the twist (although the ambient complex structure changes radically when we twist). Perhaps more interesting is the connection to pseudoconcave embeddings. Theorem~6.4 of \cite{AY} asserts the following (along with other details.)

\begin{thm} \label{AY} \cite{AY}. Given an embedding $Y\subset Z$ of compact, connected, oriented 4-manifolds (with boundary), suppose the complement $X=Z-\int Y$ has  $b_2\ne0$ and a handle decomposition without 3- or 4-handles. Then for any $n$ there are diffeomorphic submanifolds $Y_1,\dots,Y_n$ in $Z$ such that each $Y_i$ is homotopy equivalent to $Y$ (preserving the intersection pairing but not the fundamental group of the boundary) and the pairs $(Z,Y_i)$ are homeomorphic but not diffeomorphic to each other.
\end{thm}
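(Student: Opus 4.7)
The plan is to realize the $Y_i$ by performing cork twists inside the complement $X$, while keeping the ambient $Z$ fixed up to diffeomorphism. First, using that $X$ has $b_2\ne 0$ and admits a handle decomposition without 3- or 4-handles, I would follow the construction underlying Theorem~6.3 of \cite{AY} to modify $Y$ via a homotopy equivalence to $\tilde Y\subset Z$ whose complement $\tilde X$ contains $n$ disjointly embedded Akbulut corks $C_1,\dots,C_n$, each equipped with its boundary involution $\tau_k$. The modification is carried out by introducing algebraically canceling 1-2 handle pairs in $X$ and sliding handles across them, an operation that preserves intersection pairings but is allowed to alter $\pi_1$ of the boundary, precisely as the conclusion of the theorem permits. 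The corks are further arranged so that \emph{every} cork twist, taken as an operation on the ambient closed manifold, gives back a manifold diffeomorphic to $Z$ itself; this can be secured, for example, by positioning the corks in dual pairs or by absorbing them into a preliminary stabilization of $\tilde Y$.

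Second, for each subset $S\subseteq\{1,\dots,n\}$, perform the twists $\tau_k$ for $k\in S$ on the interior of $\tilde X$ to obtain $\tilde X_S$, and reglue to $\tilde Y$ via the twisted boundary identification. This produces a closed manifold $Z_S$ which by hypothesis is diffeomorphic to $Z$; choose such a diffeomorphism $\varphi_S\co Z_S\to Z$ and set $Y_S=\varphi_S(\tilde Y)$. Since cork twists act only in the interior of $\tilde X$, the submanifold $\tilde Y$ sits canonically in every $Z_S$, so the $Y_S$ are all mutually diffeomorphic (to $\tilde Y$) and all homotopy equivalent to $Y$ with matching intersection pairing. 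Freedman's theorem, applied to the contractible corks, guarantees that the pairs $(Z,Y_S)$ are pairwise homeomorphic.

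The main obstacle, and the heart of the proof, is to show that one can select subsets $S_1,\dots,S_n$ so that the pairs $(Z,Y_{S_i})$ are pairwise non-diffeomorphic. A diffeomorphism of pairs would restrict to a diffeomorphism of complements rel boundary, i.e.\ a diffeomorphism $\tilde X_{S_i}\to \tilde X_{S_j}$ extending the given boundary identification. I would rule this out with gauge-theoretic invariants: by capping off each $\tilde X_S$ with a fixed 4-manifold along the boundary (for instance, using a Stein filling guaranteed by the fillability of $\partial \tilde X$, or simply the fixed piece $\tilde Y$ regarded in a comparison closed manifold), one obtains closed 4-manifolds whose Seiberg--Witten or Ozsv\'ath--Szab\'o invariants depend genuinely on $S$. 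The hypothesis $b_2(X)\ne 0$ is essential at exactly this point, since it supplies the second homology, and hence the spin$^c$ structures and basic classes, needed for the invariants to detect the cork twists; without it the twists would be gauge-theoretically invisible. Arranging the corks so that the resulting invariants actually separate the subsets $S_i$ is the delicate technical step, and is where the specific constructions of \cite{AY} are required.
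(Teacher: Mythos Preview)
This theorem is not proved in the paper; it is quoted from \cite{AY} (their Theorem~6.4), and the paper only sketches the Akbulut--Yasui argument while proving Theorem~\ref{AYcave}. So there is no ``paper's own proof'' to compare against directly, but the sketch there does let us compare strategies.

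Your plan is broadly in the right spirit---modify $X$ by cork operations that leave $Z$ unchanged but alter the complement---but it diverges from the actual \cite{AY} argument in the crucial distinguishing step, and that is where your proposal has a real gap. Akbulut and Yasui do not distinguish the $X_i$ by capping off and computing Seiberg--Witten invariants of auxiliary closed manifolds. Instead (as described in the proof of Theorem~\ref{AYcave}), each $X_i$ is produced from a modified $\tilde X$ by ``$W^\pm(p)$-modifications'' that add an algebraically canceling $1$--$2$ pair and run a $2$-handle over the new $1$-handle; the point is that each $X_i$ is then a \emph{Stein domain} with a Legendrian diagram whose rotation numbers can be chosen so that the adjunction inequality forces different minimal genera for a fixed second homology class. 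This directly shows the $X_i$ are pairwise nondiffeomorphic as manifolds with boundary, hence the pairs $(Z,Y_i)$ are smoothly distinct. The hypothesis $b_2(X)\ne 0$ is used to guarantee a homology class on which the genus bound bites, not to feed a closed-manifold gauge-theory computation.

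Your proposed mechanism---cap off $\tilde X_S$ and appeal to Seiberg--Witten---is much less robust: you would need the cap to be chosen so that the closed invariants are nontrivial and genuinely separate the twists, and you have not indicated how to arrange this (nor is it clear it can always be done). Also, your assertion that a diffeomorphism of pairs restricts to a diffeomorphism of complements \emph{rel boundary} is too strong; you only get a diffeomorphism of complements carrying one boundary to the other, so any invariant you use must be insensitive to boundary self-diffeomorphisms. The Stein/adjunction argument in \cite{AY} sidesteps both issues.
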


\noindent That is, after a slight modification of $Y$, it has arbitrarily many smooth embeddings in $Z$ that are topologically equivalent but smoothly different --- a knotting phenomenon that is only visible in the smooth category. We can now sharpen this in the complex setting:

\begin{thm} \label{AYcave}
If $Z$ is closed and complex, then the submanifolds $Y_i$ can be assumed to be pseudoconcave.
\end{thm}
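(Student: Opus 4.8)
The plan is to reduce Theorem~\ref{AYcave} to Theorem~\ref{main-} by arranging that the complex structure $J$ on $Z$ restricts, up to homotopy through almost-complex structures, to the given abstract Stein structure on (a slightly modified copy of) each $Y_i$. The starting point is that in the Akbulut–Yasui construction underlying Theorem~\ref{AY}, the complement $X = Z - \int Y$ carries a handle decomposition with no $3$- or $4$-handles and $b_2 \ne 0$, so it can be run through the machinery of \cite{AM} (Theorem~\ref{AM}) to produce a Stein handlebody structure on a homotopy-equivalent modification. As in the proof of Theorem~\ref{AMpsc}, the key flexibility is that each Legendrian attaching circle of the Stein structure on the complement may be chosen from an infinite family sharing a fixed rotation number but with arbitrarily many extra zig-zags in both directions; these zig-zags can be flipped to adjust the rotation number of each $2$-handle by an even amount without changing its homology class. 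The submanifolds $Y_i$ themselves are then obtained (as in \cite{AY}) by cork twists on the modified $Y$, which do not affect the ambient homology data.

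First I would set up the relative Chern class exactly as in the proof of Theorem~\ref{AMpsc}: homotope $J$ on $Z$ to agree with the Stein structure near the union $H$ of $0$- and $1$-handles of the relevant piece, let $\tau$ be the standard complex trivialization of $J|H$ coming from the Legendrian diagram, and form $c_1(J,\tau) \in H^2(Z,H)$. Each $2$-handle $h$ has a relative Chern number $c(J)$ depending only on its class in $H_2(Z,H)$, and a relative Chern number for the Stein structure equal to the rotation number $r$ of its attaching circle; both reduce mod~$2$ to the relative Stiefel–Whitney number, so $c(J) \equiv r \pmod 2$. Second, I would invoke the zig-zag flexibility to arrange $\frac12|c(J) - r|$ spare zig-zags on each attaching circle and flip them so that $r = c(J)$ on every $2$-handle; this makes $J$ and the Stein structure homotopic rel $H$ over each handle, hence homotopic on the whole piece. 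Third, I would apply Theorem~\ref{main-} to isotope the relevant submanifold to a holomorphically embedded Stein handlebody in $Z$; looking at the complement then shows the submanifold $-Y_i$ inherits a pseudoconcave boundary (equivalently, $Y_i$ is pseudoconcave in the sense of the paper), while the homotopy-equivalence and homeomorphism-but-not-diffeomorphism conclusions of Theorem~\ref{AY} are untouched since all our modifications are ambient isotopies and cork twists.

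The main obstacle I expect is \emph{bookkeeping the roles of the two pieces}: the statement asks that the $Y_i$ (not their complements) be pseudoconcave, so I must apply the Stein handlebody construction of \cite{AM} to the complement $X = Z - \int Y_i$ and Theorem~\ref{main-} to that complement, then read off pseudoconcavity of $Y_i$ from the fact that the complement of a holomorphically embedded Stein domain has pseudoconcave boundary. One must check that the \cite{AY} construction genuinely leaves $X$ (and not $Y$) as the piece with a handle decomposition lacking $3$- and $4$-handles and with $b_2 \ne 0$ — which is exactly the hypothesis of Theorem~\ref{AY} — so that the \cite{AM} machinery applies to it, and that the cork twists producing the distinct $Y_i$ act within the complement in a way compatible with the Stein handle structure (as in the proof of Theorem~\ref{cork}, where twisting changes the ambient complex structure but the homology-level data, hence the homotopy class of the almost-complex structure on the relevant handlebody, is preserved). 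A secondary point to verify is that $2$-torsion in $H^2$ causes no trouble: as in Theorem~\ref{AMpsc}, matching the relative Chern numbers of $J$ and $\tilde J$ on every $2$-handle rel $H$ already forces the two almost-complex structures to be homotopic, since a Stein handlebody is homotopy equivalent to a $2$-complex, so no further obstruction in $H^2$ survives.
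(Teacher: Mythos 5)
Your high-level plan agrees with the paper's: run the \cite{AM} machinery on the complement $X = Z - \int Y$, match rotation numbers to relative Chern numbers as in the proof of Theorem~\ref{AMpsc}, and apply Theorem~\ref{main-} so that the complement becomes a holomorphically embedded Stein handlebody, making $Y_i$ pseudoconcave. Your observation about $2$-torsion being harmless via matching relative Chern numbers rel $H$ is also the right one.

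However, there is a genuine gap in how you handle the production of the distinct pieces $X_i$. You assert that the $Y_i$ ``are obtained by cork twists on the modified $Y$, which do not affect the ambient homology data,'' and refer vaguely to the proof of Theorem~\ref{cork}. This misstates the construction and skips the technical core. In \cite{AY}~Theorem~6.3, each $X_i$ is produced from the modified complement $\tilde X$ by so-called $W^{\pm}(p)$-modifications, which \emph{add new handles} --- an algebraically cancelling $1$-$2$ pair (a cork) plus, in the $W^+$ case, a diversion of an existing $2$-handle $h$ over the new $1$-handle. This changes $tb$ of $h$ by $p$, so $p$ zig-zags must be added, and one must verify that after these modifications the rotation numbers can still be made to match the relative Chern numbers of $J$. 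The paper does this by passing to the enlarged union $H'$ of $0$- and $1$-handles (including the new corks), tracking the relative class of $h$ as $h - ph^*$ in $H_2(X_i,H')$ where $h^*$ is the new cork $2$-handle, and choosing the signs of the $p$ compensating zig-zags accordingly. None of this appears in your proposal, and without it the application of Theorem~\ref{main-} is unjustified.

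You also miss a second necessary point: the Akbulut--Yasui proof deliberately chooses specific rotation numbers to constrain minimal genera and thereby distinguish the diffeomorphism types of the $X_i$. The rotation numbers required to match $J$ will generally differ from theirs, so one must observe that changing zig-zag directions (hence the relative Chern class of the abstract Stein structure) does not change the diffeomorphism type of $X_i$; the knotting conclusion of Theorem~\ref{AY} therefore survives even though its Stein structures are not the ones inherited from $Z$. The paper states this explicitly; your write-up asserts the conclusions of Theorem~\ref{AY} are ``untouched'' without noticing that the construction's smooth-invariant bookkeeping was tied to the very rotation numbers you are changing. Finally, you should note, as the paper does, that the \cite{AM} modification must be applied to just one side of the splitting (with $3$-handles allowed on the $Y$ side) and that the resulting new $1$-handles must be kept algebraically disjoint from the designated batch of $2$-handles of $X$, so that the first step of \cite{AY}~Theorem~6.3 remains intact.
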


\begin{proof} Akbulut and Yasui prove Theorem~\ref{AY} by applying their main Theorem~6.3 of \cite{AY} to the complement $X$ of $Y$, obtaining Stein domains $X_1,\dots,X_n$ that become the complements of the submanifolds $Y_i$. We combine their proof with that of Theorem~\ref{AMpsc}. The first step of the proof of \cite{AY}~Theorem~6.3 is to slide and order the 2-handles of $X$ so that the first batch is algebraically disjoint from the 1-handles and represents a basis of $H_2(X)$. After this, we insert a step, modifying the handlebody as in the proof of Theorem~\ref{AM} so that it becomes a Stein handlebody smoothly embedded in $Z$, with rotation numbers matching the relative Chern numbers as in the proof of Theorem~\ref{AMpsc} so that we obtain a holomorphically embedded Stein handlebody $\tilde X$ as in the latter proof. (The method of \cite{AM} can be applied to just one side of the splitting, with 3-handles allowed on the other side. The conditions obtained in the first step above are preserved, provided that the construction uses even integers where possible, so that the original 2-handles remain algebraically disjoint from the new 1-handles.) The proof of \cite{AY}~Theorem~6.3 then produces each $X_i$ from $\tilde X$ by ``$W^{\pm}(p)$-modifications'', each depending on a choice of sign and positive integer $p$. Each modification consists of adding a cork (an algebraically canceling 1-2 pair) to the diagram of $\tilde X$ and (for $W^+$) running a 2-handle over the new 1-handle. As noted in \cite{AY}, we still have a Legendrian diagram, but in a $W^+(p)$-modification the diverted 2-handle has $tb$ increased by $p$. We compensate by adding $p$ zig-zags to recover the correct intersection form. Let $H$ consist of the original 0- and 1-handles of $\tilde X$, inside of $X_i$, and let $H'$ be the union of $H$ with the corks from the $W^{\pm}(p)$-modifications. As in the proof of Theorem~\ref{AMpsc}, using contractibility of the corks, we can assume the embedding $X_i\subset Z$ (which was made from that of $\tilde X\subset Z$ by working in disjoint balls) preserves the given almost-complex structures near $H'$. The standard trivialization $\tau$ of the complex bundle structure over $H$ then uniquely extends over $H'$. To apply Theorem~\ref{main-}, it now suffices to verify that the rotation numbers of the 2-handles originating in $\tilde X$ behave properly under the $W^{\pm}(p)$-modifications. It is clear that $W^-(p)$-modifications present no problem, since they change neither the rotation numbers nor relative Chern classes of these 2-handles. In a  $W^+(p)$-modification, however, there is one 2-handle $h$ diverted over the 1-handle of the new cork. Let $h^*$ be the 2-handle of the new cork. Under the obvious isomorphism $H_2(X_i,H)\to H_2(X_i,H')$, the relative class of $h$ pulls back to the class of the relative cycle $h-ph^*$. The Legendrian attaching circle of $h^*$ has rotation number $\pm1$, but that of $h$ had been modified by adding $p$ zig-zags. We choose the signs of the latter so that the relative Chern class of the handlebody on $h-ph^*$ agrees with its value on $h$ before the modification. Then the embedding preserves the homotopy class of the almost-complex structures everywhere, and Theorem~\ref{main-} isotopes the embedding so that its boundary is pseudoconvex and its complement $Y_i$ is pseudoconcave as required. The rest of the proof in \cite{AY}, analyzing the topology of these examples, is unchanged, except that $\tilde X$ and its complement have been substituted for $X$ and $Y$. Note that Akbulut and Yasui require the rotation numbers to have different values from ours, in order to construct Stein structures on $X_i$ that constrain the minimal genera of homology classes, guaranteeing different diffeomorphism types. However, those complex structures need not be related to that of $Z$. We just observe that the diffeomorphism type of $X_i$ does not change when we change the direction of zig-zags to modify the relative Chern class.
\end{proof}


\section{Background: cores, convexity and contact manifolds} \label{Back}

Having explored various applications, we now wish to prove the main theorems, suitably generalized. In preparation, we review some basic background material. For more detail, see \cite{CE}, \cite{OS}. We first return to Eliashberg's construction of Stein manifolds. Section~\ref{basics} discussed how to attach a single handle, preserving pseudoconvexity of a boundary. Eliashberg uses this to inductively construct Stein manifolds. (In his approach, a trivial collar of the boundary is also attached with each handle, so infinite topology does not cause difficulties.) To prove that the resulting manifold is Stein, one must see that it admits an exhausting, plurisubharmonic function; this suffices by work of Grauert \cite{Gr}. Plurisubharmonic (equivalently, {\em $J$-convex}) functions have various useful properties \cite{CE}. The maximum of two such functions is a ``continuous $J$-convex function'', which can be smoothed to a plurisubharmonic function by a $C^0$-small perturbation, relative to a compact subset near which the function is already smooth. 
Plurisubharmonicity and pseudoconvexity are both local, $C^2$-open conditions. Thus, every pseudoconvex hypersurface $M$ in a complex manifold has a neighborhood foliated by such hypersurfaces, given as a suitable open subset of any tubular neighborhood $\R\times M$ with its product foliation. One can then introduce a plurisubharmonic function: Any function with no critical points and all level sets pseudoconvex (such as projection to $\R$ in the above neighborhood) can be made plurisubharmonic on a preassigned compact subset of its domain by postcomposing with a suitable diffeomorphism of $\R$. 
(It then increases in the outward direction, so the level sets are pseudoconvex in the boundary orientation from the sublevel sets.) This can be done relative to some open subset on which plurisubharmonicity already holds (and where critical points are allowed), since plurisubharmonicity is preserved when we postcompose with any smooth function $f$ with $f'>0$ and $f''\ge 0$. 
For example, we can prove that a given compact complex manifold $W$ is a Stein domain if we can just construct a plurisubbharmonic function $\varphi\co W\to [0,a]$ with $\partial W=\varphi^{-1}(a)$, and a codimension-0 holomorphic embedding of $W$ into some open complex manifold $X$. Then we extend $\varphi$ over a collar of $\partial W\subset X$ to get an exhausting plurisubharmonic function realizing a neighborhood of $W$ as a Stein manifold with $W$ a sublevel set. (Beware that an arbitrary Stein domain in a Stein manifold need not arise as a plurisubharmonic sublevel set of the latter: $S^1\times S^1\subset \C\times \C$ has a tubular neighborhood that is a Stein domain, but any relative Morse function on the pair must have an index-3 critical point to kill $H_2(T^2)$.) Eliashberg uses this technology to construct the required plurisubharmonic function on his manifold: Suppose a $k$-handle $h$ is attached by his method to $W$ along a pseudoconvex boundary, and $\varphi$ is a plurisubharmonic function on $W$ with $\varphi^{-1}(a)=\partial W$. Eliashberg shows that after composition with a suitable $f$ on $(-\infty,a]$ that is the identity outside a preassigned neighborhood of $a$, and after modification of $\varphi$ in a preassigned neighborhood of $h$, the function extends plurisubharmonically over $h$ with a unique new critical point (Morse, of index $k$) so that $\partial (W\cup h)$ is a level set. Induction on handles gives the required plurisubharmonic function on Eliashberg's manifold.

Recall that the extra complication of Eliashberg's method in complex dimension 2 is that a 2-handle core cannot always be made totally real by an isotopy suitably controlled on the boundary. More generally, suppose $F$ is a compact, connected, oriented surface in a complex surface $X$. Generically, $F$ is totally real in a neighborhood of its boundary. This condition gives a canonical complex basis $(\tau,n)$ for $TX$ along $\partial F$, namely the tangent and normal vector fields to $\partial F$ in $TF|\partial F$, which we can use to define relative characteristic classes. Eliashberg and Harlamov (\cite{EH}, see \cite{N} for a proof in English) showed that $F$ is then isotopic, rel a neighborhood of the boundary, to a totally real surface if and only if two relative characteristic numbers vanish. The first is $e(\nu F)+\chi (F)$, where $\chi(F)$ is the Euler characteristic, and $e(\nu F)$ is the normal Euler number of $F$ relative to $J\tau$, only depending on $J$ through $J\tau$. The second is $\langle c_1(TX,\tau,n),F\rangle$, which only depends on the complex bundle $TX$, its framing over $\partial F$, and the class of $F$ in $H_2(X,\partial F)$. The isotopy is $C^0$-small. If $M\subset X$ is an oriented hypersurface, we have seen that it inherits an oriented plane field $\xi$. In fact, the canonical map from complex bundle structures on $TX|M$ to oriented plane fields is a bijection on homotopy classes, preserving $c_1$, with inverse constructed by choosing a complementary trivial complex line subbundle. If a compact, oriented surface $F$ intersects $M$ transversely along $\partial F=M\cap F$, and this is Legendrian in $M$, then $F$ must be totally real near $\partial F$. Thus, the two obstructions to reality are defined, and are preserved by any homotopy of the almost-complex structure that fixes $\xi$. The vector field $J\tau$ on $\partial F$ gives its {\em contact framing}, which on any Legendrian link is defined by a vector field in $\xi$ nowhere tangent to the link. In a Legendrian diagram, the  invariant $tb$ is the coefficient of the contact framing, so for an almost-complex 2-handle attached along a Legendrian knot, the reality obstructions for its core are measured by the difference of its framing and relative Chern class from $tb-1$ and $r$, respectively.  In general, Eliashberg's method attaches a 2-handle along a Legendrian knot with framing obtained from the contact framing by a left twist.

 We now consider contact 3-manifolds in more detail. A {\em contactomorphism} is a diffeomorphism preserving contact structures, and a {\em contact embedding} is defined similarly. On a manifold $M$ (without boundary), Gray's Theorem \cite{Gray} asserts that two contact structures related by a compactly supported homotopy (1-parameter family) $\xi_t$ of contact structures are related by an {\em isotopy}, i.e., a contactomorphism ambiently isotopic to the identity. The isotopy varies smoothly with respect to any auxiliary parameters, is real-analytic wherever $M$ and $\xi_t$ are, and fixes any point at which $\xi_t$ is independent of $t$ (so has compact support). Gray's Theorem can be used to show that every {\em Legendrian isotopy} in $M$ extends to a {\em contact isotopy}. That is, given an isotopy of a Legendrian link $L$ through a family of Legendrian links, there is a (compactly supported) isotopy of $\id_M$ through contactomorphisms that, when composed with the inclusion of $L$, returns the original Legendrian isotopy.

 The most interesting contact 3-manifolds are {\em tight}. The classification of tight contact structures is a delicate, and in general unsolved, problem, whereas every homotopy class of plane fields on a closed, oriented 3-manifold contains an essentially unique contact structure failing to be tight \cite{overtwisted}. A contact 3-manifold is {\em tight} if no disk in it can have Legendrian boundary with contact framing homotopic to the outward normal of the disk. Equivalently, tightness means that every unknotted Legendrian circle has $tb<0$ (where the latter is defined since framing coefficients are well-defined for nullhomologous knots, without reference to a preassigned diagram). The boundary of a Stein domain must be tight \cite{Durham}, and an open contact 3-manifold is clearly tight whenever every open subset with compact closure is tight. We will deduce from these facts (Proposition~\ref{tight}(b)) that every contact 3-manifold bounding a Stein shard is tight.

 Tight contact structures on some simple contact 3-manifolds (with boundary) have been classified. For example, $S^3$ and $S^2\times I$ admit unique tight contact structures up to isotopy, relative to a neighborhood of the boundary with a preassigned tight structure in the latter case \cite{ball}. Tight contact structures on the solid torus $T=S^1\times D^2$ have also been classified \cite{Gi}, \cite{H}. For simplicity we assume that the boundary of $T$ is {\em convex}, as is generic for closed, oriented surfaces $F$ in contact 3-manifolds. This means we can identify a neighborhood of $F$ with a neighborhood of $\{0\}\times F$ in $\R\times F$ endowed with some $\R$-invariant contact structure. We then have a {\em dividing set}, which is the subset of $F$ on which $\xi$ is parallel to the $\R$-factor. This is always a smoothly embedded, compact 1-manifold separating $F$ into positive and negative regions (since we continue to assume $\xi$ is oriented). In a tight contact 3-manifold, the dividing set of a convex torus is always a nonzero, even number of parallel essential circles. While the classification of tight, convex solid tori is somewhat complicated, we only need the simplest case (e.g.\ \cite{OS} Theorem~5.1.30): When the dividing set in $\partial T$ is a pair of {\em longitudes}, i.e., circles intersecting $\{1\}\times \partial D^2$ algebraically once, then the contact structure is unique up to isotopy, relative to a fixed structure near the boundary. (It is immediate from the proof that the isotopy fixes a neighborhood of the boundary.) An explicit model is given by a closed, cylindrical neighborhood $T_0$ of the $y$-axis in $\R^3$ modulo unit $y$-translation, with contact form $dz+xdy$. The $\R$-direction near $\partial T_0$ is radial, with $\R$-action multiplying $x$ and $z$ by $e^t$, and the two dividing curves are longitudes following the (constant) contact framing on the core circle $K$. Any Legendrian knot has a standard tubular neighborhood contactomorphic to this $T_0$. Every Legendrian circle $K'$ in $T_0$ that is smoothly isotopic to the core $K$, preserving the contact framings, is Legendrian isotopic to it. (To see this, note that $K$ is Legendrian isotopic to the circles in $\partial T_0$ directly above and below it. Now any such $K'$ in $T_0$ can be pushed into $\int T_0$ by convexity. Since its contact framing agrees with that of $K$, its standard tubular neighborhood $T'\subset\int T_0$ has dividing curves on its boundary that are parallel to those of $T_0$. Classification theory then shows that $T_0-\int T'$ is contactomorphic rel $\partial T_0$ to a cylindrical shell around $K$, so inspection shows that corresponding Legendrian circles in the two boundary components of $T_0-\int T'$ are Legendrian isotopic.) Similarly, any tight, convex solid torus $T_1$ divided by a pair of longitudes contains a unique Legendrian isotopy class of Legendrian cores with contact framing determined by the dividing curves. This is because Giroux's Flexibility Theorem guarantees that $T_1$ is isotopic within itself to a copy of $T_0$, whose boundary lies with degree 1 in a preassigned neighborhood of $\partial T_1$, and whose dividing set is parallel to that of $T_1$. Note that other Legendrian cores for $T_1$ can be created by adding zig-zags, but their contact framings will have left twists relative to the dividing curves.


\section{Proof of the main theorem} \label{Smooth}

The goal of this section is to present and prove the main principle in its full generality as Theorem~\ref{main}, and deduce various consequences including Theorems~\ref{smooth} and \ref{main-}. We begin by carefully establishing our conventions for handlebodies, and expanding to the relative case. We will then extend Stein theory in arbitrary dimensions to allow {\em Stein shards}, noncompact manifolds with boundary, culminating in a notion of (possibly noncompact) {\em relative Stein handlebodies}. While we do not need the full generality of Theorem~\ref{main} for the applications in the present paper, it will be useful in the sequel \cite{steintop}.

To attach a $k$-handle $h$ to an $m$-manifold $W$ (with boundary), we first glue $D^k\times D^{m-k}$ onto $W$ by a diffeomorphic embedding of the attaching region $\partial D^k\times D^{m-k}$ into $\partial W$. Then we smooth the corners by removing a neighborhood of the co-attaching region $\int D^k\times\partial D^{m-k}$ disjoint from the attaching region, creating a smooth boundary. (A neighborhood of the corner locus is modeled by $\partial D^k\times\partial D^{m-k}$ crossed with $\R^2$ minus the open first quadrant. Replace the $C^0$ boundary curve of this last factor by a $C^\infty$ curve agreeing with the positive $x$-axis, but curving upward parallel to the $y$-axis for negative $x$. Thus, the boundaries of $W$ and $W\cup h$ are $C^\infty$-close where they join along the boundary of the attaching region.)

\begin{de} For an $m$-manifold $W$ with boundary (not necessarily compact), a {\em handlebody relative to} $W$ is a nested collection of $m$-manifolds $W=H_{-1}\subset H_0\subset H_1\subset \dots\subset H_m=H$, where for each $k\ge0$, $H_k$ is made from $H_{k-1}$ by attaching (possibly infinitely many) $k$-handles with disjoint attaching regions. A {\em handlebody} is the special case when $W$ is empty.
\end{de}

\noindent Thus, handles are attached in order of increasing index. Since we never add collars to the boundaries, there can only be finitely many handles if $\partial H_0$ is compact. (Otherwise, clustering would destroy the manifold with boundary.) To allow infinite topology in this context, we are then forced to allow noncompact boundaries. We next adapt Stein theory to this setting.

\begin{de} \label{shard}
A complex manifold $W$ with pseudoconvex boundary (not necessarily compact) will be called a {\em Stein shard} if it has a proper, codimension-0 holomorphic embedding into a Stein manifold.
\end{de}

\noindent Properness guarantees that the image of the embedding is a closed subset, so Stein shards without boundary are precisely Stein manifolds. A Stein shard necessarily has an exhausting plurisubharmonic function $\psi\co W\to[0,\infty)$ (where we specify no additional boundary condition for $\psi$) obtained by restricting one on the ambient Stein manifold. Corollary~\ref{existspsh} below gives a converse.

The next two propositions are easy consequences of Grauert's characterization of Stein manifolds via plurisubharmonic functions \cite{Gr}. We present them to elucidate Stein shards and pseudoconcave fillings, and to introduce the (classical) method of proof, which relies heavily on the facts gathered in the first paragraph of the previous section.

\begin{prop}\label{int} The interior of a Stein shard is a Stein manifold.
\end{prop}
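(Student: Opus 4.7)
The plan is to apply Grauert's criterion: it suffices to construct an exhausting, strictly plurisubharmonic function $\varphi\co\int W\to[0,\infty)$. From the definition of a Stein shard, $W$ inherits an exhausting plurisubharmonic function $\psi\co W\to[0,\infty)$ by restricting one from the ambient Stein manifold (with no boundary condition). Its restriction to $\int W$ is plurisubharmonic with the correct growth at infinity in $W$, but its sublevels in $\int W$ may accumulate on $\partial W$, so I must modify $\psi$ near $\partial W$ to blow up there.

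For the boundary barrier, I invoke the foliation principle recalled at the start of Section~\ref{Back}: $\partial W$ has a collar $U\cong[0,\epsilon)\times\partial W$ in $W$ foliated by pseudoconvex parallel hypersurfaces $M_t=\{\rho=t\}$, with $\rho$ the collar coordinate and $\partial W=\{\rho=0\}$. The function $-\rho$ has pseudoconvex level sets in the correct orientation: the sublevel sets $\{-\rho\le c\}=\{\rho\ge-c\}$ are bounded by $M_{-c}$ with outward normal in the $-\partial_\rho$ direction, which is the pseudoconvex direction inherited from $\partial W$. Postcomposing with a strictly convex, strictly increasing $f\co(-\epsilon,0)\to\R$ that tends to $+\infty$ at $0$ from below --- for example $f(t)=-1/t$, possibly replaced by one growing faster along an exhaustion of $V:=U\cap\int W$ --- yields $h:=f(-\rho)$ strictly plurisubharmonic on $V$ and tending to $+\infty$ at $\partial W$.

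I then amalgamate $\psi$ and $h$ by a max. After adjusting $f$ so that $h<\psi$ on the outer shell $\{\rho\ge\epsilon/2\}$ (feasible because $h$ is continuous and finite there while $\psi\ge 0$), the function $\varphi_0:=\max(\psi,h)$ on $V$, extended by $\psi$ on $W\setminus V$, is a continuous plurisubharmonic function on $\int W$ agreeing with $\psi$ off a slightly smaller collar. The smoothing procedure for continuous plurisubharmonic functions recalled in Section~\ref{Back} replaces $\varphi_0$ by a $C^0$-close smooth strictly plurisubharmonic $\varphi\co\int W\to[0,\infty)$, relative to any compact subset where $\varphi_0$ is already smooth. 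Each sublevel $\{\varphi\le c\}$ lies inside $\{\psi\le c\}\cap\{\rho\ge\delta_c\}$ for some $\delta_c>0$ determined by $f$, hence is compact in $\int W$, so $\varphi$ is the required Grauert exhaustion.

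The main technical point is globalizing the postcomposition $f$: pseudoconvexity of the individual slices $M_t$ makes $f(-\rho)$ plurisubharmonic only after a postcomposition that may a priori depend on the compact piece of $V$ under consideration, and when $\partial W$ is noncompact this dependence has to be absorbed into a single choice of $f$ via an exhaustion argument familiar from Eliashberg's construction. Once the uniform barrier $h$ is in place, the remaining steps --- max amalgamation, smoothing, and verification that sublevels are compact in $\int W$ --- are routine.
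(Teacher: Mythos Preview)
Your proposal is correct and follows essentially the same strategy as the paper: build a plurisubharmonic barrier near $\partial W$ that blows up there, amalgamate with the restricted exhausting function $\psi$ via $\max$, and smooth. The paper carries out the barrier construction explicitly by the exhaustion-by-compacta $\{K_i\}$ that you allude to in your final paragraph, so you have correctly identified the one nontrivial point.

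One small cosmetic difference worth noting: you arrange the matching by adjusting $f$ so that $h<\psi$ on the outer shell, whereas the paper instead postcomposes $\psi$ to make it exceed the barrier $\varphi^*$ along a separating hypersurface $M$. The paper's direction is slightly more robust, since postcomposing $\psi$ with any convex increasing function preserves plurisubharmonicity and is unconstrained, while your freedom to adjust $f$ is already tied up in making $h$ plurisubharmonic on noncompact $V$ (so your parenthetical ``feasible because $h$ is continuous and finite there while $\psi\ge 0$'' is not quite a justification in the noncompact case). Either way the fix is easy and the argument goes through.
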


\begin{proof}  Given a Stein shard $W$, let $U\subset W$ be a neighborhood of $\partial W$, admitting a smooth function $\varphi_0\co U\to \R$ without critical points, such that $\partial W$ is its maximal level set $\varphi_0^{-1}(a)$ and all level sets are pseudoconvex. Let $\{K_i\}$ be a sequence of compact subsets of $U$ for which $U^*=\bigcup\int K_i$ contains $\partial W$ but $\varphi_0(K_i)\subset [a-\frac1i,a]$ for each $i$. Inductively construct a sequence of functions $\varphi_i$ on $U$, with each $\varphi_i$ plurisubharmonic on $K_1\cup\dots\cup K_i$, creating $\varphi_i$ from $\varphi_{i-1}$ for each $i>0$ by postcomposing with a suitable function. We can assume that each point of $U^*-\partial W$ has a neighborhood on which the sequence is eventually constant, so the sequence converges to a plurisubharmonic $\varphi^*\co U^*-\partial W\to [0,\infty)$ that we can assume approaches $\infty$ along $\partial W$. Now let $M\subset U^*$ be a hypersurface separating $\partial W$ from $W-U$. Let $\psi$ be an exhausting plurisubharmonic function on the Stein shard $W$. By postcomposing $\psi$ with a suitable function, we can arrange $\psi>\varphi^*$ everywhere along $M$.  Then the function $\max(\varphi^*,\psi)$ on the region between $M$ and $\partial W$, extended by $\psi$ over the rest of $\int W$, is a continuous $J$-convex function on $\int W$, so it smooths to an exhausting plurisubharmonic function exhibiting $\int W$ as a Stein manifold. 
\end{proof}

\begin{prop}\label{tight}
(a) A connected complex manifold $W$ with  nonempty compact boundary and complex dimension $>1$ is a Stein shard if and only if it is a Stein domain.

\item{(b)} Every compact subset of a Stein shard $W$ holomorphically embeds rel $\partial W$ in a Stein domain of the same dimension as $W$. In particular, every compact subset of $\partial W$ has a contact embedding in a Stein boundary, so when $\dim_\C W=2$, $\partial W$ is a tight contact 3-manifold.

    \item{(c)} A compact, connected pseudoconvex hypersurface in a Stein manifold $V$ with complex dimension $>1$ must be oriented outward, i.e., as the boundary of a Stein domain in $V$.
\end{prop}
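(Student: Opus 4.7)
I would organize the proof by first establishing (c), which the other two parts draw upon.

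For (c), let $\psi\co V\to[0,\infty)$ be an exhausting strictly plurisubharmonic function. In complex dimension $n\ge 2$, the Stein manifold $V$ has the homotopy type of a CW complex of dimension $\le n<2n-1$, so $H_{2n-1}(V;\Z)=0$; hence the fundamental class $[M]$ vanishes, $M$ separates $V$, and exactly one of the components $V_-$ (pseudoconvex interior) and $V_+$ (the other side) is compact. Suppose for contradiction $V_+$ is compact. By the strict maximum principle for plurisubharmonic functions, $\psi|_{\overline{V_+}}$ attains its max at some $p_1\in\partial V_+=M$, so $p_1$ is a critical max of $\psi|_M$. Lagrange multipliers give $d\psi(p_1)=\lambda\, d\rho(p_1)$ for a local psh defining function $\rho$ of $M$ with $\rho>0$ on $V_+$; since $\psi$ decreases into $V_+$, we have $\lambda<0$. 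The function $\tilde\psi=\psi+|\lambda|\rho$ is then strictly plurisubharmonic near $p_1$ with $d\tilde\psi(p_1)=0$, so its Hessian is a well-defined symmetric bilinear form on $T_{p_1}V$. Since $\tilde\psi|_M=\psi|_M$ still has a critical max at $p_1$, the restriction of $\mathrm{Hess}(\tilde\psi)$ to $T_{p_1}M$ is negative semidefinite. Restricting further to the $J$-invariant complex tangent subspace $\xi_{p_1}=T_{p_1}M\cap JT_{p_1}M$, which is nonzero precisely because $n\ge 2$, the Levi form of $\tilde\psi$ on $\xi_{p_1}$ is both $\le 0$ (by $J$-invariance applied to the Hessian restriction) and $>0$ (strict plurisubharmonicity), a contradiction. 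Hence $V_-$ is the compact side.

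For (a), a Stein domain is by definition a sublevel set of a regular value of an exhausting psh function on a Stein manifold, hence a Stein shard. Conversely, given a Stein shard $W$ with compact nonempty boundary, embed it properly in a Stein manifold $X$ and apply (c) to each component $M_i$ of $\partial W$ to obtain compact pseudoconvex interiors $C_i\subset X$. Near $M_i$, both $W$ and $C_i$ sit on the pseudoconvex-interior side, so they coincide locally. A clopen argument then gives $W\subset\bigcup C_i$: the set $W\setminus\bigcup C_i$ is open in $W$, its frontier in $W$ lies in $\partial W\subset\bigcup C_i$, yet any point of $\partial W$ has a half-ball neighborhood in $W$ contained in some $C_i$, so the frontier is empty and thus $W\setminus\bigcup C_i$ is empty by connectedness (argued component by component). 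Hence $W$ is compact. Now the max-smoothing construction of Section \ref{Back}, applied to $\psi_X|_W$ together with a local psh defining function $\rho$ for $\partial W$ extended suitably, yields an exhausting psh function on a Stein neighborhood of $W$ in $X$ having $W$ as a sublevel set of a regular value, so $W$ is a Stein domain.

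For (b), given a compact $K\subset W$, take a regular value $c>\max\psi_X$ on the compact set $K\cup(\partial W\cap\mathrm{nbhd}(K))$, and let $\rho$ be a local psh defining function for $\partial W$ extended smoothly beyond its natural domain by a value small enough that it does not interfere. The smoothed max of $\psi_X-c$ and $\rho$ is a psh function whose zero sublevel set is a compact complex manifold of the same dimension as $W$ with pseudoconvex boundary consisting of part of $\partial W$ together with part of $\psi_X^{-1}(c)$; this is the required Stein domain $Y$ containing $K$, and on $\partial W\cap Y$ the embedding is the identity, meeting the rel $\partial W$ requirement. Specializing to $K\subset\partial W$ gives the contact embedding in a Stein boundary. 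For tightness when $\dim_\C W=2$: every compact subset of $\partial W$ contact-embeds in the boundary of a Stein domain, which is a tight contact 3-manifold by the result of Section~\ref{Back}; since tightness of an open contact 3-manifold is detected on compact subsets, $\partial W$ is tight.

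The main obstacle will be the Hessian/Levi-form calculation in (c): specifically, ensuring that the ambient Hessian of $\tilde\psi$ at its critical point $p_1$ restricts to the intrinsic Hessian of $\tilde\psi|_M$ on $T_{p_1}M$, and that the $J$-invariance of $\xi_{p_1}$ is invoked correctly to convert a real-Hessian inequality into a Levi-form inequality. This is precisely where the hypothesis $\dim_\C V>1$ enters, since for $n=1$ the subspace $\xi_{p_1}$ collapses and the contradiction disappears (consistent with the failure of the statement in complex dimension one).
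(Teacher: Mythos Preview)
Your argument for (c) is genuinely different from the paper's and essentially correct. The paper never invokes the maximum principle; instead it splices a collar plurisubharmonic function $\varphi$ (blowing up on one side of $M$) into the ambient exhausting $\psi$ to manufacture a new Stein open set in which $M$ would separate two noncompact pieces, contradicting the homological fact that a compact hypersurface in a Stein manifold of complex dimension $>1$ must bound a compact region. Your Hessian/Levi-form contradiction is a clean alternative: the main things to tighten are that you only get $\lambda\le 0$ (not strictly $\lambda<0$, since $p_1$ could already be a critical point of $\psi$), and that the clause ``exactly one of the components is compact'' needs the same homological input the paper uses (a separating compact hypersurface with both complementary pieces noncompact would pair nontrivially with a proper arc, hence represent a nonzero class in $H_{2n-1}$). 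You also stop short of the final clause of (c), that $\overline{V_-}$ is actually a Stein \emph{domain}; this needs one more splicing step, which the paper does with the same collar function $\varphi$ now going to $-1$ on the inside.

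For (a), your route works but is more laborious than the paper's: once (c) is in hand, the paper observes directly that $W$ must coincide with the Stein domain $C$ bounded by a component $M_1$ of $\partial W$ (and that $\partial W$ is in fact connected), so no separate plurisubharmonic construction is needed. Your (b) has a real imprecision: the phrase ``extended smoothly beyond its natural domain by a value small enough that it does not interfere'' does not yield a plurisubharmonic function, and you have not controlled what happens at the edge of the collar where $\partial W$ leaves the region $\{\psi_X<c\}$. The paper handles exactly this by choosing a compact submanifold-with-boundary $N\subset\partial W$ containing $\partial W\cap\psi^{-1}[0,a]$, building $\varphi$ on a product collar $[0,2]\times N$, and arranging $\psi>a$ along $[0,2]\times\partial N$ so that $\max(\varphi,\psi)$ equals $\psi$ near both the inner face $\{0\}\times N$ and the lateral boundary $[0,2]\times\partial N$, hence extends by $\psi$. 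Your sketch is headed in the same direction, but this boundary-of-the-collar bookkeeping is precisely the content that makes the construction go through.
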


\begin{proof} For (b), $W$ is a Stein shard, so we are given a holomorphic embedding $W\subset V$ into a Stein manifold with an exhausting plurisubharmonic $\psi\co V\to  [0,\infty)$. Suppose $\psi$ maps the given compact subset $K$ into $[0,a)$. Find an embedding $[0,2]\times N\subset V$, where $\{1\}\times N\subset\partial W$ is some compact submanifold with boundary, containing $\partial W\cap \psi^{-1}[0,a]$ in its interior. Since $\psi|(\{1\}\times\partial N)>a$, we can assume, after mapping $[0,2]$ into a smaller neighborhood of 1, that $\psi|([0,2]\times\partial N)>a$. We can also arrange $(1,2]\times N$ to be disjoint from $W$, so that $W\cap\psi^{-1}[0,a]$ intersects $\partial ([0,2]\times N)$ only inside $\{0\}\times N$. After further shrinking the interval, each level $\{t\}\times N$ is pseudoconvex, so we can find a plurisubharmonic $\varphi\co [0,2]\times N\to[-1,\infty)$ with these level sets. Rescaling and adding a constant if necessary, we can assume  $\varphi^{-1}(a)=\{1\}\times N$ and $\varphi^{-1}(-1)=\{0\}\times N$. The function $\max(\varphi,\psi)$ extends by $\psi$ over a neighborhood of $W\cap\psi^{-1}[0,a]$ in $V$, and can then be smoothed rel $\partial W\cap K$ to a plurisubharmonic $\eta$ for which $a$ is a regular value. Then $\eta^{-1}[0,a]$ is the required Stein domain. For (c), observe that the given hypersurface $N\subset V$ must bound a compact region in $V$, for otherwise it would represent a nontrivial homology class above the middle dimension when $\dim_\C(V)>1$. Suppose $N$ is oriented inward. Construct $\varphi$ on a collar of $N$ as before, approaching infinity on the inside boundary and $-1$ on the outside boundary. Splicing this into an exhausting plurisubharmonic function on $V$ as before exhibits an open subset of $V$ as a Stein manifold separated into two noncompact regions by $N$, violating our previous observation. Thus, $N$ is oriented outward, and the same argument with  $\varphi$ reaching $-1$ on the inside realizes $N$ as the boundary of a Stein domain. Now (c) implies that for a Stein shard $W\subset V$ as in (a), $\partial W$ is connected and $W$ is the Stein domain cut out of $V$ by $\partial W$.
\end{proof}

\begin{de} \label{SHB} A complex manifold with boundary, exhibited as a handlebody $H$ relative to a submanifold $W$, will be called a {\em relative Stein handlebody} if
 \item{a)} each $H_k$ ($k\ge -1$) is a Stein shard, and
 \item{b)}  when $\dim_\C H=2$, the attaching region of each 2-handle, which is a solid torus in $\partial H_1$, has convex boundary with  two dividing curves, each a longitude that when modified by a left twist determines the 2-handle framing.
\end{de}

Condition (b) guarantees cores of the sort arising in Eliashberg's construction:

 \begin{prop} \label{core} Each $k$-handle of a relative Stein handlebody has a totally real core disk that is $J$-orthogonal to $\partial H_{k-1}$. In particular, its attaching sphere is isotropic in $\partial H_{k-1}$.
 \end{prop}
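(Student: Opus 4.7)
The plan is to produce a totally real, $J$-orthogonal core by ambient-isotoping the given topological $k$-handle onto an Eliashberg-standard one inside $H_k$. The case $k=0$ is immediate (the core is a point), so I focus on $k\ge 1$.

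First, I would apply Proposition~\ref{tight}(b) to a compact neighborhood of the handle $h$ together with the relevant piece of $\partial H_{k-1}$, embedding everything holomorphically into a Stein domain. Using the technology reviewed at the beginning of Section~\ref{Back}, I would then arrange a plurisubharmonic function on this domain for which $\partial H_{k-1}$ is a regular level set, placing the whole picture inside Eliashberg's attaching framework from Section~\ref{basics}.

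Second, I would isotope $h$ inside $H_k$ so that its attaching sphere becomes isotropic in the hyperplane field $\xi=T\partial H_{k-1}\cap JT\partial H_{k-1}$. In the subcritical range $k-1<n-1$, the $C^0$-dense h-principle for subcritical isotropic embeddings gives this, and the complex normal framing is then canonical. In the critical range $k=n>2$, Gromov's Legendrian approximation in the contact $(2n-1)$-manifold $\partial H_{k-1}$ supplies the isotropic attaching sphere, and the space of compatible complex framings is path-connected in this dimension. The delicate case is $k=n=2$, where condition~(b) of Definition~\ref{SHB} intervenes: as reviewed at the end of Section~\ref{Back}, a convex solid torus with two longitudinal dividing curves is a standard Legendrian tubular neighborhood containing an essentially unique Legendrian core, and condition~(b) identifies the 2-handle framing with the contact framing shifted by one left twist --- exactly Eliashberg's attaching data.

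With the attaching sphere isotropic and the framing matched, I would attach Eliashberg's standard thin holomorphic $k$-handle $h_E$; its core $D$ is totally real and $J$-orthogonal to $\partial H_{k-1}$ by Eliashberg's main lemma (Section~\ref{basics}). Since $h_E$ and the isotoped $h$ share their attaching data, $H_{k-1}\cup h_E$ is diffeomorphic to $H_k$ rel $H_{k-1}$, and the isotopy extension theorem produces an ambient isotopy of $H_k$ carrying $h_E$ onto $h$; transporting $D$ along this isotopy furnishes the required core, whose attaching sphere is automatically isotropic. The hard part is the critical case $k=n=2$: without condition~(b) the Eliashberg--Harlamov obstructions would in general prevent total reality of a 2-handle core, and condition~(b) of Definition~\ref{SHB} is built in precisely to eliminate those obstructions.
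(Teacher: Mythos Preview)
Your final step has a genuine gap. Transporting the totally real, $J$-orthogonal core $D$ of $h_E$ along a smooth ambient isotopy does \emph{not} produce a totally real, $J$-orthogonal disk in $h$: these conditions are defined relative to the fixed complex structure $J$ on $H_k$, and a non-holomorphic diffeomorphism does not preserve them. Knowing that the abstractly attached $h_E$ has a totally real core therefore says nothing about $h$ unless the identification $H_{k-1}\cup h_E\cong H_k$ is biholomorphic, which it is not. (The same objection applies to your subcritical and higher-dimensional arguments; there one is rescued only because the obstruction groups themselves vanish, not by the transport.) Note also that to attach $h_E$ \emph{ambiently} via Eliashberg's lemma you would already need a totally real, $J$-orthogonal disk in $H_k-\int H_{k-1}$, which is precisely what you are trying to construct.

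The paper's argument in the case $k=n=2$ is accordingly different. Instead of transporting a core, it verifies the Eliashberg--Harlamov reality obstructions for a smooth core of $h$ directly, by comparison with an abstract Eliashberg model $h_0$ attached along the same Legendrian $K$. The ingredient you are missing is the \emph{co-attaching region} $T'=\partial H_2\cap h$: since $\partial H_2$ bounds a Stein shard, $T'$ is tight, and by the solid-torus classification its contact structure is uniquely determined rel $\partial T'$ by its dividing curves --- which condition~(b) forces to match those of the model's co-attaching region $T_0'$. Thus $T\cup T'$ and $T\cup T_0'$ are contactomorphic rel a neighborhood of $K$. The complex bundle $TH$ over the $3$-sphere $\partial h=T\cup T'$ is determined up to homotopy by these contact structures, and the relative homology class of the core is carried by $T\cup T'$; hence both reality obstructions (normal Euler and relative Chern) agree with those of the model, which vanish. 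The Eliashberg--Harlamov theorem then supplies the totally real core inside $h$ itself. Condition~(b) enters through uniqueness of the contact structure on the co-attaching region, not through any smooth handle isotopy.
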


 \begin{proof} Such disks are guaranteed by the general theory \cite{E}, \cite{CE} except when $\dim_\C H=k=2$. In that case, the attaching region $T \subset \partial H_1$ of the given 2-handle $h$ inherits a convex contact structure guaranteeing an essentially unique Legendrian attaching circle $K$ with the correct contact framing (giving the 2-handle framing after a left twist). We can assume $K$ is real analytic (cf.\ proof of Theorem~\ref{main}). The boundary $\partial T$ also bounds the co-attaching region  $T'= \partial H_2\cap h$. The nontrivial circle in $\partial T$ that bounds in $T'$ gives the framing of $h$, so it intersects each dividing curve once. Thus, the contact structure on $T'$, which is tight by Proposition~\ref{tight}(b), is determined by its restriction to a neighborhood of $\partial T$ in $\partial H_2$. Note that $T\cup T'=\partial h$ is homeomorphic to $S^3$. Now we construct a model by abstractly attaching a 2-handle $h_0$ to $H_1$ inside $T$, using Eliashberg's method, along the given $K$. We get a solid torus $T_0'$ in the new boundary as before, contactomorphic to $T'$ rel boundary. It suffices to show that the core disks of $h$ and $h_0$ have the same Eliashberg-Harlamov reality obstructions, since $h_0$ has a totally real core and vanishing invariants by construction. But the normal Euler numbers agree since $h$ and $h_0$ are homeomorphic rel $K$ and its contact framing. For the relative Chern number, note that $H_2(h,K)\cong H_1(K)\cong H_2(T\cup T',K)$, so the homology class of the core disk is carried by $T\cup T'$ and determined there by $K$. The complex structure on the bundle $TH|(T\cup T')$ is determined up to homotopy by the contact structures on $T$ and $T'$, so the homotopy equivalence from $T\cup T'$ to $T\cup T_0'$ rel a neighborhood of $K$ preserves the relative Chern number as required.
 \end{proof}

\begin{prop}\label{SteinHB} If a relative handlebody $H$ is made from a Stein shard $W$ (or, more generally, from any relative Stein subhandlebody) by attaching handles by Eliashberg's method, then we can assume it is a relative Stein handlebody.
\end{prop}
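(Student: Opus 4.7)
The plan is to argue by induction on the handle index $k$, showing that each intermediate $H_k$ inherits a Stein shard structure compatible with the next round of Eliashberg surgery. The base case $H_{-1}=W$ (or more generally any prescribed relative Stein subhandlebody) is a Stein shard by hypothesis, so it carries an exhausting plurisubharmonic function $\psi_{-1}$ and a proper holomorphic embedding into an ambient open complex manifold $Y_{-1}$ of the same dimension (Corollary~\ref{existspsh}). For the inductive step, I assume $H_{k-1}$ is a Stein shard with data $(\psi_{k-1}, Y_{k-1})$ and aim to construct the analogous data on $H_k$.

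Each $k$-handle is attached by Eliashberg's method along an isotropic (Legendrian when $k=n=\dim_\C H$) sphere in $\partial H_{k-1}$. The standard holomorphic handle $D^k\times iD^k_\epsilon\times D^{2n-2k}_\epsilon\subset\C^n$ glues onto $Y_{k-1}$ via a holomorphic embedding of a neighborhood of the attaching region, producing an open complex manifold $Y_k$ of the same dimension containing $H_k$. Eliashberg's main lemma then extends $\psi_{k-1}$ plurisubharmonically over each handle, with exactly one new Morse critical point of index $k$, after postcomposing with a function $f$ supported near $\psi_{k-1}^{-1}(a)$ as in Section~\ref{Back}. Since the attaching regions of distinct $k$-handles are disjoint by definition, the modifications of $\psi_{k-1}$ are done independently in disjoint regions, so infinite families of handles pose no conflict. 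Choosing the new critical values to tend to infinity along any exhaustion of $H_{k-1}$ keeps $\psi_k$ exhausting, and extending $\psi_k$ plurisubharmonically over a collar of $\partial H_k$ inside $Y_k$ (by the standard device of foliating by pseudoconvex hypersurfaces and postcomposing a non-critical defining function) exhibits an open subset of $Y_k$ as a Stein manifold with $H_k$ as a sublevel set. Invoking Corollary~\ref{existspsh} (or Proposition~\ref{tight}(a) when $\partial H_k$ happens to be compact), this verifies Definition~\ref{SHB}(a).

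Condition (b) on 2-handle attaching regions when $\dim_\C H=2$ is essentially built into Eliashberg's construction. Each 2-handle is attached along a Legendrian circle $K\subset\partial H_1$ with respect to the contact structure induced by pseudoconvexity. Its standard tubular neighborhood, modeled on the cylindrical neighborhood of the $y$-axis in $(\R^3/\Z,\, dz+xdy)$ recalled in Section~\ref{Back}, is convex with two dividing curves that are longitudes following the contact framing of $K$. Eliashberg's prescription fixes the 2-handle framing to be $tb(K)-1$, i.e.\ the contact framing after a single left twist, which is exactly the condition in Definition~\ref{SHB}(b).

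The main potential obstacle is the interaction between infinite topology and the requirement that $\psi_k$ remain \emph{exhausting}; unlike Eliashberg's original treatment, which adds a trivial collar with each handle, we attach handles without collars and must accommodate possibly clustering attaching regions in a noncompact $\partial H_{k-1}$. This is handled by selecting a locally finite exhaustion of $H_{k-1}$ first and prescribing the new critical values so that only finitely many lie below any given level, using that each handle contributes only one critical point whose value is at our disposal. A minor ancillary point is that the ambient $Y_k$ obtained by gluing standard handle models need not itself be Stein, but this is irrelevant: only the Stein shard property of $H_k$ is required, and that follows from the exhausting plurisubharmonic function $\psi_k$ together with the holomorphic embedding $H_k\hookrightarrow Y_k$ of equal dimension.
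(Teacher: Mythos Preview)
Your inductive framework has a genuine gap in the noncompact case, precisely where the paper does its real work. You want the exhausting plurisubharmonic function $\psi_{k-1}$ on the Stein shard $H_{k-1}$ to have $\partial H_{k-1}$ as a level set $\psi_{k-1}^{-1}(a)$, so that Eliashberg's extension lemma applies. But an exhausting (proper, bounded below) function has compact level sets, while $\partial H_{k-1}$ is typically noncompact here; so no such $a$ exists. Without the boundary as a level set, Eliashberg's lemma does not apply to $\psi_{k-1}$, and your sentence ``modifications of $\psi_{k-1}$ are done independently in disjoint regions'' is not justified: the postcomposition step in Eliashberg's lemma is a \emph{global} modification of the function near the relevant level, not a local one near the handle, so infinitely many handles attached at varying boundary values genuinely interact. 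The paper separates the two roles you have conflated. It builds a \emph{collar} function $\varphi_0$ near $\partial H_{k-1}$ with the boundary as its top level set (but not exhausting), extends this over the $k$-handles by a careful nested exhaustion $\{K_i\}$ so that only finitely many handles are processed at each stage, and then splices the resulting $\varphi^*$ with a separate exhausting $\psi$ on $H_{k-1}$ via $\max(\varphi^*,\psi)$, with the two sequences constructed concurrently to control the inequalities on both sides. Your last paragraph correctly identifies the difficulty but the proposed fix (``choosing the new critical values to tend to infinity along any exhaustion'') does not address it, since you still have no single plurisubharmonic function to which these values belong.

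There is also a smaller point in your treatment of condition~(b). Eliashberg's construction attaches the 2-handle along \emph{some} tubular neighborhood $N$ of the Legendrian $K$, but $N$ need not coincide with the standard convex solid torus $T$ whose dividing set is the required pair of longitudes. The paper explicitly thins the 2-handles so that $N\subset T$ and then makes a $C^\infty$-small perturbation of $\partial H_2$ to arrange $N=T$; this is the adjustment flagged by the phrase ``we can assume'' in the statement, and it is the only place where $H$ is actually changed.
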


\noindent The only change we make in $H$ is an adjustment of the 2-handles when $\dim_\C H=2$; when this does not occur, we prove that $H$ is already a relative Stein handlebody. Recall that we use the term ``Stein handlebody" in this paper as a precise alternative to the intrinsically vague condition ``made by Eliashberg's method". In complex dimension 2, Eliashberg's method constructs Stein handlebodies (with $W=\emptyset$ and finitely many handles) from Legendrian link diagrams. Conversely, every compact Stein handlebody in this dimension determines such a diagram (up to suitable moves \cite{Ann}, \cite{GS}): Definition~\ref{SHB}(b) guarantees that the 2-handle attaching regions in $\partial H_1$ have essentially unique Legendrian core circles whose contact framings follow the dividing curves, determining a Legendrian link in $\partial H_1$ up to Legendrian isotopy.

\begin{proof} We assume $H$ is made by Eliashberg's method from $W$. For the general case, incorporate the Stein subhandlebody into $W$ (and hence into each $H_k$) without changing notation.

 To prove (b) of the definition when it applies, note that by construction, each 2-handle is attached to a neighborhood $N$ of a Legendrian circle $K$ in $\partial H_1$, with framing obtained from the contact framing of $K$ by a left twist. This $N$ can be taken to lie in a standard neighborhood $T$ of $K$, since the 2-handles form the top layer and so can be thinned as needed. After a further $C^\infty$-small perturbation of $\partial H_2$, splitting it away from $\partial H_1$ over $T$, we can assume $N=T$. (This is the only part of the proof where we need to adjust $H$.) The dividing curves of $T$ are positioned correctly since they follow the contact framing of $K$.

To prove (a), inductively suppose that $H_{k-1}$ is a Stein shard, as is true by hypothesis for $H_{-1}=W$. Since $H_k$ is made from $H_{k-1}$ by attaching a layer of (possibly infinitely many) disjoint handles by Eliashberg's method, $\partial H_k$ is automatically pseudoconvex. Thus, it suffices to properly embed $H_k$ in a Stein manifold. We can at least properly embed $H_k$ in an open complex manifold $V$ by starting with a neighborhood of $H_{k-1}$ in its given Stein manifold, then using the extension implicit in Eliashberg's construction. We wish to cut $V$ down to a Stein manifold using a plurisubharmonic function constructed as in Proposition~\ref{int}. Start as before in $H_{k-1}$, with a function $\varphi_0\co \cl (U)\to \R$ without critical points, with level set $\varphi_0^{-1}(a_0)=\partial H_{k-1}\subset U\subset H_{k-1}$, and all levels pseudoconvex. Let $\{K_i\}$ be a sequence of compact subsets of $H_k$, disjoint from $H_{k-1}-U$, for which $\bigcup\int K_i$ (taking interiors in $H_k$) contains $\partial H_{k-1}$, but $\varphi_0(K_i\cap H_{k-1})\subset [a_0-\frac1i,a_0]$ for each $i$. Also assume that each $k$-handle in $H_k$ lies in every $K_i$ that it intersects, and that the subsets $K_i\cap\partial H_{k-1}$ are nested. After postcomposing with a suitable function, we can assume $\varphi_0$ is plurisubharmonic near $K_1\cap H_{k-1}$.  Since $K_1$ contains only finitely many $k$-handles, Eliashberg's method further modifies $\varphi_0$, rel any preassigned compact subset of $\int H_{k-1}$, to $\hat\varphi_1\co U\cup K_1\to \R$, plurisubharmonic near $K_1$, with level set $\hat\varphi_1^{-1}(a_1)=\partial (U\cup K_1)$ for some $a_1\ge a_0+1$. To begin extending to the required Stein neighborhood of $H_k$ in $V$, we enlarge the domain of  $\hat\varphi_1$ slightly: Let $f_t$ be a smooth ambient isotopy of $V$, compactly supported away from $H_{k-1}-K_2$ and the $k$-handles outside $K_1$, expanding $K_1$ so that $f_t(K_1\cap\partial H_k)$ is disjoint from $H_k$ for all $t>0$. Compact support guarantees that for sufficiently small $t>0$, the map $\varphi_1=\hat\varphi_1\circ f_t^{-1}$ on $f_t(U\cup K_1)$ is still plurisubharmonic near $f_t(K_1)$. Now inductively repeat the entire construction for each $K_i$, leaving the part of $\varphi_{i-1}$ over $K_{i-1}$ fixed except for postcomposing as necessary, to obtain a sequence of functions $\varphi_i$, defined and plurisubharmonic in a neighborhood of $K_1\cup\dots\cup K_i$ in $V$. The sequence is eventually constant on a neighborhood of any given point where it is eventually defined, except for boundary points where the sequence increases without bound. The resulting limit $\varphi^*\co U^*\to [0,\infty)$, on an open subset $U^*$ of $V$ given by $\bigcup\int K_i$ union a neighborhood of $\partial H_k$, is plurisubharmonic and approaches infinity along a pseudoconvex hypersurface in $V-H_k$ parallel to $\partial H_k$.

To exhibit $V^*=U^*\cup H_{k-1}$ as the required Stein manifold containing $H_k$, let $M\subset U^*\cap \int H_{k-1}$ be a hypersurface separating $\partial H_{k-1}$ from $H_{k-1}-U$ in $V$. We wish to construct an exhausting plurisubharmonic function $\psi$ on $H_{k-1}$ with $\psi>\varphi^*$ on $M$ and $\varphi^*>\psi$ on $\partial H_{k-1}$, for then $\max(\varphi^*, \psi)$ will extend and smooth to the required exhausting plurisubharmonic function on $V^*$. We construct $\psi$ concurrently with $\varphi^*$. Begin with any exhausting plurisubharmonic function $\psi_0$ on the Stein shard $H_{k-1}$; this will become $\psi$ after suitable postcomposition. Now $M$ is the nested union of compact subsets $M_i=M\cap P_i$, where $P_i= \psi_0^{-1}(-\infty,b_i]$ for some $b_i\ge \max\psi_0|K_i\cap H_{k-1}$ (so $K_i\cap H_{k-1}\subset P_i$). Since postcomposing preserves level sets, and $\varphi_0|M_1<a_0$, we can assume $\varphi_0$ is plurisubharmonic on the compact set $\bigcup_i K_i\cap\varphi_0^{-1}(-\infty,c_1]$ for some $c_1\ge \max\varphi_0|M_1$. Postcompose $\psi_0$ with a suitable function to get $\psi_1>\varphi_0$ on $M_1$. Since $M_1$ is a compact subset of $\int H_{k-1}$, we can control the construction of $\varphi_1$ so that it agrees with $\varphi_0$ on $M_1$ (hence $\psi_1>\varphi_1$   on $M_1$) but $\varphi_1>\psi_1$ on the compact set $K_1\cap \partial H_{k-1}$. We can also assume $\varphi_1$ is plurisubharmonic on $\bigcup_i K_i\cap\varphi_1^{-1}(-\infty,c_2]$ for some $c_2\ge \max\varphi_1|M_2$ (using the analogous condition previously obtained for $\varphi_0$). Postcomposing with a suitable function, we can now modify $\psi_1$ rel its sublevel set $P_1$ to get $\psi_2$ with  $\psi_2>\varphi_1$ on $M_2$. Since $\varphi_1>\psi_1=\psi_2$ on $K_1\cap \partial H_{k-1}\subset P_1$, we can now construct $\varphi_2$ so that it agrees with $\varphi_1$ on $M_2$ (in fact on their sublevel set $\varphi_1^{-1}(-\infty,c_2]$) but exceeds $\psi_2$ on $K_2\cap \partial H_{k-1}$.  Continuing in this manner, we construct sequences $\varphi_i$ and $\psi_i$ simultaneously so that the limits satisfy the required inequalities.
\end{proof}

Note that the Stein shard condition was only used for three things --- to guarantee a pseudoconvex boundary on $H_{k-1}$, an exhausting plurisubharmonic function on it, and a codimension-0 embedding of it in some open complex manifold. (We can always arrange such an embedding to be proper by cutting down the ambient manifold.) Thus, the trivial case $H=W$ shows:

 \begin{cor}\label{existspsh}
 A complex manifold with pseudoconvex boundary is a Stein shard if and only if it admits an exhausting plurisubharmonic function and a codimension-0 holomorphic embedding into some complex manifold (without boundary).\qed
\end{cor}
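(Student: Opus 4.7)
The forward implication follows immediately from Definition~\ref{shard}. If $W$ is a Stein shard, then by definition it admits a proper, codimension-0 holomorphic embedding into some Stein manifold $V$, which is a complex manifold without boundary; Grauert's theorem supplies an exhausting plurisubharmonic function on $V$, and properness of the embedding forces its restriction to $W$ to be exhausting and plurisubharmonic as well.

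For the converse, the strategy is to invoke Proposition~\ref{SteinHB} (or rather its proof) in the degenerate case $H = W$, with no handles attached. The paragraph immediately preceding the corollary explicitly observes that the Stein shard hypothesis on $H_{k-1}$ is used in that proof only to extract three ingredients: pseudoconvexity of the boundary, an exhausting plurisubharmonic function, and a codimension-0 holomorphic embedding into some open complex manifold $V$. All three are provided directly by the hypotheses of the corollary, so the construction applies verbatim.

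Concretely, one first extends the pseudoconvex collar of $\partial W$ slightly into $V$ and builds, by iterated postcomposition with convex functions coordinated with small ambient push-outs, a plurisubharmonic function $\varphi^*$ on an open subset $U^* \subset V$ containing $\partial W$, with $\varphi^*$ diverging along a pseudoconvex hypersurface just outside $W$. Simultaneously, the given exhausting plurisubharmonic function $\psi$ on $W$ is postcomposed so that $\psi > \varphi^*$ on a separating hypersurface $M \subset U^* \cap \int W$, while $\varphi^* > \psi$ near $\partial W$. The continuous $J$-convex function $\max(\varphi^*,\psi)$ on the region between $M$ and the hypersurface along which $\varphi^*$ diverges, extended by $\psi$ over the remainder of $W$, smooths to an exhausting plurisubharmonic function on an open set $V^* \subset V$ containing $W$. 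Grauert's theorem then exhibits $V^*$ as a Stein manifold, and the divergence of $\varphi^*$ beyond $\partial W$ keeps $W$ closed in $V^*$, so the inclusion $W \hookrightarrow V^*$ is a proper, codimension-0 holomorphic embedding, confirming that $W$ is a Stein shard.

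The main technical delicacy is the simultaneous diagonal construction of $\varphi^*$ in $V$ and the postcomposition of $\psi$ on $W$ in the noncompact setting, where both must be dovetailed so that the required inequalities hold along nested compact exhaustions of $M$ and $\partial W$. This is precisely the coordinated double induction already executed at the end of the proof of Proposition~\ref{SteinHB}, so no new argument is required beyond observing that the handle-attaching step is vacuous when there are no handles.
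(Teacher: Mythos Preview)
Your proposal is correct and follows exactly the paper's approach: the paper's proof consists solely of the paragraph preceding the corollary, noting that the argument of Proposition~\ref{SteinHB} uses only the three listed ingredients, so the trivial case $H=W$ yields the result. You have simply unwound that observation with more explicit detail than the paper provides, including correctly handling properness via the divergence of $\varphi^*$ (which the paper dispatches in a one-line parenthetical about cutting down the ambient manifold).
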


We can now state and prove the main theorem, a generalization of Theorem~\ref{main-}.

\begin{thm}\label{main}
Let $H$ be a Stein handlebody relative to $W$, with $\dim_\C H=2$, and let $f\co H\to X$ be a smooth embedding into a complex surface. Suppose that the complex structure $J_X$ on $X$ pulls back to one on $H$ that is homotopic rel $W$ (through almost-complex structures) to the original complex structure $J_H$. Then $f$ is smoothly isotopic (ambiently if $f$ is proper) to $\hat f$, with $\hat f(H)$ exhibited as a Stein handlebody relative to $\hat f(W)$ (for the complex structure inherited from $X$ and the handle structure inherited from $H$). Furthermore, $\hat f$ restricts to a contactomorphism from each $\partial H_k$ to its image. Each map in the isotopy sends each $H_k$ into $f(H_k)$ (and similarly for every subhandlebody of $H$). The isotopy on $W$ is $C^r$-small (for any preassigned $r\ge 2$) and supported in a preassigned neighborhood of the attaching regions.
\end{thm}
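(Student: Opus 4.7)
My plan is to prove Theorem~\ref{main} by induction on the handle index $k\in\{-1,0,1,2\}$, constructing at each stage a compactly supported smooth isotopy of the previous embedding, supported in a preassigned neighborhood of the attaching regions of the $k$-handles inside $f(H_k)$, whose final map carries $H_k$ to a Stein subshard of $X$ and restricts on $\partial H_k$ to a contactomorphism onto its image. Concatenating these isotopies gives $\hat f$, and when $f$ is proper I would invoke isotopy extension to promote the concatenation to an ambient isotopy. The base case $k=-1$ is essentially free: because the homotopy from $J_H$ to $f^*J_X$ is taken rel $W$, we have $f^*J_X=J_H$ on $W$, so $f(W)$ is already a Stein shard with pseudoconvex boundary carrying the pushed-forward contact structure.

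For the inductive step, assume $\hat f$ has been arranged on $H_{k-1}$ so that $\hat f(H_{k-1})$ is a Stein shard and $\hat f|_{\partial H_{k-1}}$ is a contactomorphism. I would process each $k$-handle $h$ separately (the attaching regions being pairwise disjoint) as follows. Proposition~\ref{core} supplies a totally real core $D\subset h$ that is $J_H$-orthogonal to $\partial H_{k-1}$, with attaching sphere isotropic and of the right framing; pushing forward, $\hat f(\partial D)$ is isotropic in $\partial\hat f(H_{k-1})$ with the same framing data (for $k=2$, the longitudinal framing of Definition~\ref{SHB}(b) taken with a left twist). The task is to isotope $f|_h$ rel a neighborhood of $\partial D$, inside $f(h)$, to an embedding whose core is totally real and $J_X$-orthogonal; after this, Eliashberg's main lemma combined with Proposition~\ref{SteinHB} produces a Stein subhandle inside $f(h)$ and closes the step.

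For $k=0,1$ this placement is routine via neighborhood theorems for totally real (respectively, isotropic) submanifolds, since in these dimensions the characteristic obstructions are trivial. The main obstacle, and the heart of the proof, is $k=2$: here I would apply the Eliashberg--Harlamov criterion reviewed in Section~\ref{Back}, which isotopes the image core $f(D)$ rel boundary to a totally real disk provided the two relative characteristic numbers $e(\nu F)+\chi(F)$ and $\langle c_1(TX,\tau,n),f(D)\rangle$ both vanish. The first depends only on the contact framing of $\partial D$ together with $\chi(D)=1$, both preserved by $\hat f|_{\partial H_{k-1}}$, so it coincides with the corresponding invariant for the abstract Stein handle in $H$ and therefore vanishes by Proposition~\ref{core}. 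The second is where the hypothesis on almost-complex structures is indispensable: a homotopy of almost-complex structures on $H$ from $J_H$ to $f^*J_X$ rel $W$ forces the relative Chern numbers, computed with respect to a common framing at $\partial D$ determined by the preserved contact data, to agree on the core; the Chern number for $J_H$ vanishes by Proposition~\ref{core}, whence so does the one for $J_X$.

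After isotoping $f(D)$ to a totally real $J_X$-orthogonal disk I would thin $f(h)$ onto an Eliashberg neighborhood of this disk to produce the required Stein subhandle. The contactomorphism property on $\partial H_k$ propagates because the new portion of $\partial\hat f(H_k)$ is built by Eliashberg's method and inherits the standard contact structure on its solid-torus attaching region, which by Definition~\ref{SHB}(b) agrees up to isotopy rel boundary with that of the abstract attaching region of $h$. Infinite topology is handled as in the proof of Proposition~\ref{SteinHB}: disjoint attaching regions let the isotopies within each $f(h)$ paste into a single compactly supported isotopy on $f(H_k)$, and careful bookkeeping ensures that the isotopy on $W$ remains $C^r$-small and supported only in a preassigned neighborhood of the attaching regions, as the theorem demands.
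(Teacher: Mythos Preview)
Your overall architecture matches the paper's, but there is a genuine gap in the step you flag as ``the heart of the proof.'' The relative Chern obstruction $\langle c_1(TX,\tau,n),[D]\rangle$ for a 2-handle core is computed with respect to the framing $(\tau,n)$ along $\partial D$ determined by the contact structure on $\partial H_1$, \emph{not} on $\partial W$. The hypothesis only gives you a homotopy of almost-complex structures rel $W$; it says nothing about $\partial H_1$. So your sentence ``a homotopy \dots\ rel $W$ forces the relative Chern numbers, computed with respect to a common framing at $\partial D$ determined by the preserved contact data, to agree'' is exactly the point that needs proof and does not follow from what you have written: during the homotopy the contact plane field on $\partial H_1$, and hence the framing $(\tau,n)$, may wander.

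The paper closes this gap by carrying a second inductive hypothesis alongside the Stein-shard one: after the $k$th stage, the homotopy $f^*J_X\simeq J_H$, restricted to $H-\int H_k$, can be taken to fix the induced contact structure on $\partial H_k$. Recovering this after attaching 0-handles is easy (simple connectivity of the fiber $S^2$ of plane fields lets you straighten the homotopy near a point). After 1-handles it is not: on each co-attaching region $S^2\times I$ there is an obstruction in $\pi_2(S^2)\cong\Z$ to straightening the homotopy rel endpoints. The paper shows this integer is always even (it reduces mod~2 to a relative $w_2$ which vanishes), and then kills it by precomposing the contactomorphism on the 1-handle boundary with an even power of a $2\pi$-twist $\psi$ of $S^2\times I$; the point is that $\psi^{2m}$ is smoothly but not contact-isotopic to the identity, and changes the obstruction by $2m$. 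You have no mechanism for this correction, and without it the Eliashberg--Harlamov argument for the 2-handles does not go through. A secondary lacuna is that your ``contactomorphism propagates'' clause only addresses the solid-torus ($k=2$) co-attaching region; for $k=0,1$ one must separately invoke uniqueness of tight structures on $S^3$ and on $S^2\times I$ rel boundary, and in the $k=1$ case this is precisely where the freedom to insert $\psi^{2m}$ enters.
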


\noindent It seems reasonable (although vague) to assert that $\hat f(H)$ is made by Eliashberg's method (at least up to some $C^r$-small perturbations). In any case, if $H$ is compact with $W=\emptyset$, then $H$ and $\hat f(H)$ determine the same Legendrian handle diagram, since $\hat f$ preserves the contact structure on $\partial H_1$ (cf.\ discussion following Proposition~\ref{SteinHB}). Theorem~\ref{main-} follows immediately.

\begin{proof} We can reduce to the case that $f$ is proper, by replacing $X$ by a neighborhood of $f(H)$ consisting of $f(H)$ and an open collar $(0,1)\times f(\partial H)$ of its boundary. The isotopy we construct will be ambient in this reduced $X$, and fix the outer half $(\frac12,1)\times f(\partial H)$ of the collar. For $k=0,1,2$, we inductively assume that $f$ has already been isotoped as in the theorem so that $f|H_{k-1}$ satisfies the conclusions given for $\hat f$, that is, $f(H_{k-1})$ is a relative Stein handlebody to whose subhandlebody boundaries $f$ is a contactomorphism. We then attempt to extend these conclusions over $f|H_k$. We also inductively assume that $f^*J_X$ is homotopic to $J_H$ on $H-\int H_{k-1}$ through almost-complex structures inducing a fixed contact structure on $\partial H_{k-1}$. (This latter constraint along $\partial H_{k-1}$ is the key to controlling the 2-handles.) Note that the induction hypotheses are trivially satisfied for the original $f$ when $k=0$, since the hypotheses of the theorem guarantee that $f|W$ is holomorphic.

To recover the induction hypotheses over $H_k$, we ambiently isotope $f$ near each $k$-handle $h$. By Proposition~\ref{core}, $h$ has a totally real core $D$ with $\partial D\subset \partial H_{k-1}$ isotropic (so Legendrian when $k=2$). We arrange $f(\partial D)$ to be a real-analytic submanifold of $X$. This is trivially true unless $k=2$, in which case we first isotope $f$ rel $\partial H_k$, pushing part of $f(\partial H_{k-1})$ inward into $f(H_{k-1})$ and making it an analytic submanifold near $f(\partial D)$. We make the isotopy sufficiently $C^r$-small, $r\ge 2$, that $f(\partial H_{k-1})$ remains pseudoconvex, so by Gray's Theorem we can assume $f|\partial H_{k-1}$ remains a contactomorphism. Now by Lemma 2.5.1 of \cite{E}, we can further perturb $f$, contactomorphically on $f(\partial H_{k-1})$, so that $f(\partial D)$ is real-analytic but still Legendrian. (First find an analytic curve $K$ $C^r$-close to $f(\partial D)$, then analytically perturb the contact form to make $K$ Legendrian. Gray's Theorem gives an isotopy from $K$ to an analytic, Legendrian curve in the original contact structure, contact isotopic to $f(\partial D)$.) We can assume our hypotheses are still intact.  (The only potential difficulty is that $f$ on $\partial H_0-\partial H_1$  may have been perturbed in a $C^r$-small way near where $\partial D$ crosses onto a 1-handle. To make $f$ again a contactomorphism there without disturbing our work on $\partial H_1$, find a 1-parameter family of hypersurfaces locally interpolating between $\partial H_0$ and $\partial H_1$ there, then apply Gray's construction. This varies smoothly with respect to the parameter and fixes $\partial H_1$ since $f$ is already a contactomorphism there.) For any $k$, we now isotope $D\subset H$ rel boundary (fixing $f$) so that $f(D)$ has an analytic 1-jet along $f(\partial D)$ and is $J_X$-orthogonal to $f(\partial H_{k-1})$. We further arrange $f(D)$ to be totally real. Again, this is trivially true unless $k=2$. In that case, we recall that $D\subset H$ was totally real with Legendrian boundary, so the reality obstructions of Eliashberg and Harlamov vanish for $J_H$ on $D$. Since $J_H$ is homotopic to $f^*J_X$ on $H-\int H_{k-1}$, fixing the induced contact structure on $\partial H_{k-1}$, the obstructions for $f^*J_X$ also vanish on $D$, so there is an isotopy of $D$ rel its 1-jet along  $\partial H_{k-1}$ making $f(D)$ totally real. For any $k$, a further $C^r$-small isotopy makes $f(D)$ analytic but still totally real and intersecting  $f(\partial H_{k-1})$ $J_X$-orthogonally along $f(\partial D)$. Eliashberg's main lemma now locates a $k$-handle $\hat h\subset f(h)$ with core $f(D)$, attached to $f(H_{k-1})$ preserving pseudoconvexity of the boundary. By Proposition~\ref{SteinHB}, the union $\hat H_k$ of $f(H_{k-1})$ with the handles $\hat h$ for all $k$-handles of $H$ is a relative Stein handlebody. We now construct an isotopy of $f$, supported near the $k$-handles and $C^r$-small on $H_{k-1}$, after which $f$ maps the set $H_k$ onto $\hat H_k$. First, push each $k$-handle $f(h)$ down (toward $f(H_{k-1})$) so that its attaching region $f(\partial_-h)$ has fixed boundary but interior disjoint from its former location, and near the boundary of $\partial_-h$, $f|\partial H_k$ goes contactomorphically into $\partial \hat H_k$. (This is $C^r$-small since $\partial H_k$ is $C^{r+2}$-close to $\partial H_{k-1}$, on which $f$ was already a contactomorphism.) We can assume that $f$ is still a contactomorphism on each $\partial H_i$, $i<k$, by interpolating between boundaries and applying Gray's construction as before. It is now easy to isotope $f$ rel a neighborhood of $H_{k-1}$, so that $f(H_k)=\hat H_k$. Our initial induction hypotheses are preserved, and we have incremented $k$ for one of them: $f$ induces a Stein handlebody structure on $f(H_k)$.

There are two induction hypotheses left to reconstruct: We must arrange $f|\partial H_k$ to be a contactomorphism, and construct a suitable homotopy of $f^*J_X$ if $k<2$. The first hypothesis is easy when $k=0$, since the boundary of a 0-handle is $S^3$, on which any two tight contact structures are isotopic. (Note that the handle lies in the interior of its original location, so we can perform the isotopy while keeping each subhandlebody of $f(H)$ within its original location as required.) When $k=2$, the co-attaching region $\partial_+ h=\cl(\partial h-H_{k-1})$ of each $k$-handle is a solid torus $T'\subset \partial H_2$ as in the proof of Proposition~\ref{core}, whose tight contact structure is uniquely determined rel boundary by the dividing curves. When $k=1$, $\partial_+ h=S^2\times I$ again has a unique tight contact structure $\xi$ rel boundary, so we can again isotope $f$ to make it a contactomorphism on $\partial H_k$. Now, however, we have a choice of isotopy: If $\gamma\subset S^2\times I$ is an arc transverse to $\xi$ and isotopic to $\{p\}\times I$ rel boundary, there is an obvious diffeomorphism $\psi$ of $S^2\times I$ rel boundary that fixes $\gamma$ but puts a $2\pi$-twist in its normal bundle. (Rotate $S^2\times\{t\}$ by $2\pi t$ about $p$.) We can arrange $\psi$ to be a contactomorphism (e.g.\ using a local model for the transverse arc $\gamma$ and uniqueness on the remaining ball). Since $\pi_1(\SO(3))=\Z/2$, each even power $\psi^{2m}$ has an isotopy $\Psi$ rel boundary (but moving $\gamma$) to the identity. However, there can be no such isotopy through contactomorphisms. To see this, fix a nowhere-zero vector field $v$ on $\xi$ (which can be done since $c_1(\xi)=0$). Then any contact isotopy of $\gamma$ is covered by $v|\gamma$ in an obvious way, so that if $\gamma$ is returned to its original position then so is $v|\gamma$. However, $\psi^{2m}$ changes $v|\gamma$ by $2m$ twists, so it cannot be obtained by contact isotopy rel $\partial$ from the identity. We can interpret this $2m$ as the Chern number, relative to $v$ on the boundary, of $\Psi^*(\xi)$ over $\gamma\times I\subset S^2\times I\times I$. We have now arranged $f|\partial H_k$ to be a contactomorphism as required, and for $k=1$ we can isotopically change our choice of such $f$ in an essential way by composing with even powers of $\psi$ on the 1-handles. (We take the isotopy to be ambient in $X$, but supported in the original image of $f$.)

To complete the induction and the proof, it remains to construct a suitable homotopy of $f^*J_X$ for $k<2$. Our isotopies of $f$ so far have not disturbed the induction hypothesis that $f^*J_X$ is homotopic to $J_H$ on $H-\int H_{k-1}$ through almost-complex structures inducing a fixed contact structure on $\partial H_{k-1}$. We restrict the homotopy to $H-\int H_k$, and wish to arrange the homotopy to fix the contact structure on $\partial H_k$. We already know that the contact structure is fixed away from the $k$-handles, and that the initial and final contact structures are equal everywhere, although the intermediate plane fields may not be contact on the $k$-handle boundaries. We arrange the contact structure to be fixed except on a ball in each $k$-handle boundary: The set of oriented planes at a given point in $\partial H_k$ can be identified with $S^2$ (by the positive normal vector, or by the identification of complex vector space structures with $\SO(4)/\U(2)$). This is simply connected, so we can modify our homotopy rel $t=0,1$ to fix the contact structure near a given point, giving the required condition when $k=0$. For the remaining case $k=1$, note that $\pi_2(S^2)=\Z$, so the given homotopy $\xi_t$ may be nontrivial on the 1-handle boundaries. The obstruction is measured by the relative Chern number for each 1-handle as in the previous paragraph. This reduces mod 2 to the relative Stiefel-Whitney number $\langle w_2(\xi_t,v),\gamma\times I\rangle$, which vanishes since $\xi_t$ stabilizes to the tangent bundle $T(S^2\times I)$, and the boundary framing on this induced by $(\xi,v)$ obviously extends. Thus, the homotopy changes $v|\gamma$ by an even number of twists, and after composing with an even power of $\psi$ on each 1-handle, we can assume the obstruction vanishes. We can now assume the homotopy fixes the contact structure away from a collection of balls for both $k=0,1$. We can arrange these balls to be disjoint from the attaching regions of the handles of indices $>k$ (a regular neighborhood of a 1-complex in a 3-manifold). We are then free to redefine the homotopy on the balls so that it fixes the contact structure there, reconstructing the last induction hypothesis.
\end{proof}

An analogous theorem holds in the context of Stein surfaces and exhausting plurisubharmonic functions, with a similar proof:

\begin{thm}\label{main2}
Let $V$ be a Stein surface with an exhausting plurisubharmonic Morse function $\varphi$, and let $W=\varphi^{-1}(-\infty,a]$ be some regular sublevel set (possibly empty). Let $f\co V\to X$ be a smooth embedding into a complex surface. Suppose that the complex structure $J_X$ on $X$ pulls back to one on $V$ that is homotopic rel $W$ to the original complex structure $J_V$. Then $f$ is smoothly isotopic rel $W$ to $\hat f$, with $\hat f(V)\subset f(V)$ exhibited as a Stein surface by a plurisubharmonic function obtained by postcomposing $\varphi\circ\hat f^{-1}$ with a suitable diffeomorphism of $\R$. Furthermore, $\hat f$ restricts to a contactomorphism on all level sets of $\varphi$ avoiding a preassigned neighborhood of the critical values $>a$.
\end{thm}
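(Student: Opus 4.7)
The plan is to adapt the inductive proof of Theorem~\ref{main} to the filtration of $V$ by sublevel sets of $\varphi$, building the isotopy and the plurisubharmonic function on $\hat f(V)$ in parallel. By Proposition~\ref{SteinHB}, after a preliminary $C^2$-small modification of the 2-handles supported in $V-W$, the function $\varphi$ exhibits $V$ as a relative Stein handlebody over $W$ whose handles are attached in order of increasing critical value $a<c_1<c_2<\cdots$, with $c_i\to\infty$ by exhaustiveness. Fix pairwise disjoint open intervals $N_i\ni c_i$ inside the preassigned neighborhood of $\{c_j:c_j>a\}$ on which contactomorphism is not required, and choose a regular value $b_i$ between $\sup N_i$ and $\inf N_{i+1}$.

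I would then induct on $i\ge 0$, producing at the $i$-th stage a map $\hat f_i$ isotopic to $f$ rel $W$ such that $\hat f_i(\varphi^{-1}(-\infty,b_i])$ is a Stein domain in $(X,J_X)$, the map $\hat f_i$ restricts to a contactomorphism on each $\varphi^{-1}(b_j)$ with $j\le i$, and the isotopy from $\hat f_{i-1}$ to $\hat f_i$ is supported in a compact neighborhood of $\hat f_{i-1}(\varphi^{-1}[b_{i-1},b_i])$ in $V-W$, pushing each $\varphi$-handle at critical value $c_i$ slightly into its image under $\hat f_{i-1}$. The inductive step is verbatim that of Theorem~\ref{main}: the induction hypothesis promotes $f^*J_X\simeq J_V$ rel $W$ to a homotopy $\hat f_{i-1}^*J_X\simeq J_V$ fixing the contact structure on $\varphi^{-1}(b_{i-1})$, so the Eliashberg-Harlamov obstructions for the 2-handle cores vanish, Eliashberg's main lemma produces thinner pseudoconvex handles $\hat h\subset\hat f_{i-1}(h)$, and Gray's theorem then arranges $\hat f_i$ to be a contactomorphism on $\varphi^{-1}(b_i)$.

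In parallel, I would build $\hat\varphi$ on $\hat f(V)$ by iterating Eliashberg's extension of plurisubharmonic functions as in Proposition~\ref{SteinHB}. At the $i$-th stage, the existing $\hat\varphi_{i-1}$ is first postcomposed with an increasing diffeomorphism of $\R$ supported in $N_i$, then extended across the new handles by Eliashberg's method, with a further $C^r$-small adjustment of $\hat f_i$ arranging the new handle cores in $X$ to coincide with the $\hat f_i$-images of the corresponding $\varphi$-handle cores in $V$. A final postcomposition with a diffeomorphism of $\R$ supported in $N_i$ makes the level sets of $\hat\varphi_i$ coincide with the $\hat f_i$-images of the level sets of $\varphi$ outside $N_i$. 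Because successive isotopies and $\R$-reparametrizations have disjoint compact supports in $V$ and $\R$ respectively, the limits $\hat f=\lim\hat f_i$ and $\hat\varphi=\lim\hat\varphi_i$ are well-defined smooth objects on $\hat f(V)\subset f(V)$, and by construction $\hat\varphi=\psi\circ\varphi\circ\hat f^{-1}$ for a smooth diffeomorphism $\psi$ of $\R$ that is the identity outside $\bigcup N_i$.

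The main obstacle is coordinating the handle-by-handle construction so that $\hat\varphi$ really does arise as a postcomposition of $\varphi\circ\hat f^{-1}$, rather than merely as some plurisubharmonic function with the same critical point structure. This forces two compatibilities to be arranged on each new handle: the core produced by Eliashberg's main lemma must coincide, under $\hat f_i$, with the $\hat f_i$-image of the original $\varphi$-core, and the pseudoconvex collar swept out by Eliashberg's extension must sweep out the $\hat f_i$-images of the level sets of $\varphi$. The first is arranged using the freedom in the totally-real isotopy provided by vanishing of the Eliashberg-Harlamov obstructions, absorbed into the $C^r$-small adjustment of $\hat f_i$; the second is then arranged by a postcomposition of $\R$ confined to $N_i$, using the fact that a plurisubharmonic function transverse to a pseudoconvex foliation is determined by the foliation up to $\R$-reparametrization.
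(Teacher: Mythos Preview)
Your overall architecture matches the paper's: exhaust $V$ by regular sublevel sets, use the descending disks of $\varphi$ as cores, apply Eliashberg's lemma, and invoke Morse-theoretic local models to make Eliashberg's plurisubharmonic extension literally agree with $\varphi\circ\hat f^{-1}$ up to an $\R$-reparametrization. The paper's proof is essentially this, phrased a bit more tersely (``use standard models from Morse Theory to isotope $f$ and postcompose'').

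There is, however, a genuine gap in the sentence ``the inductive step is verbatim that of Theorem~\ref{main}'' and in your claim that the induction hypothesis ``promotes $f^*J_X\simeq J_V$ rel $W$ to a homotopy $\hat f_{i-1}^*J_X\simeq J_V$ fixing the contact structure on $\varphi^{-1}(b_{i-1})$.'' In Theorem~\ref{main}, the homotopy of almost-complex structures \emph{fixing the boundary contact structure} is carried as part of the induction hypothesis, and the last paragraph of that proof explains how to reconstruct it --- but only for $k<2$, since 2-handles are the top layer. You are inducting over critical values, so an index-2 critical point may well be followed by further critical points, and you must reconstruct that boundary-fixing homotopy across the 2-handle's co-attaching solid torus. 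This is not automatic: the obstruction to straightening the homotopy on such a torus lives in $\pi_2(S^2)\cong\Z$, and there is no twisting trick analogous to the even power of $\psi$ used for 1-handles. Without this, the Eliashberg--Harlamov reality obstructions for the next 2-handle core need not vanish (they are only invariant under homotopies \emph{fixing} $\xi$ along $\partial D$; see Section~\ref{Back}). The paper's proof of Theorem~\ref{main2} addresses exactly this point: one arranges that the ascending trajectories from index-2 critical points avoid all later critical points, so the 2-handle co-attaching regions are disjoint from all subsequent attaching regions, and the homotopy may then be freely redefined on those solid tori. This is the one genuinely new ingredient beyond Theorem~\ref{main}, and your proposal omits it.

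A minor remark: your invocation of Proposition~\ref{SteinHB} is both unnecessary and slightly off. That proposition concerns handlebodies with handles ordered by \emph{index}, whereas you are ordering by critical value; and you do not need it anyway, since the descending disks of a plurisubharmonic Morse function are already totally real and $J$-orthogonal to the level sets, which is all the inductive step uses.
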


The isotopy may not be ambient. Consider, for example, $(V,W)=(\C^2,\emptyset)$ embedded in $X=\C\times \C$ as the complement of the closed ray $[0,\infty)\times \{0\}$. Any ambient isotopy sends this to the complement of some ray, which is not a domain of holomorphy since any holomorphic function on it extends over $\C^2$.

\begin{proof} Let $W=V_{-1}\subset V_0\subset V_1\subset\dots$ be regular sublevel sets exhausting $V$, with at most one critical level in each $V_k-V_{k-1}$. Inductively assume that $\varphi\circ f^{-1}$ is plurisubharmonic on (hence near) $f(V_{k-1})$, $f$ is a contactomorphism on $\partial V_{k-1}$ and on the specified level sets in $V_{k-1}$, and $f^*J_X$ is homotopic to $J_V$ on $V-\int V_{k-1}$, fixing the contact structure on $\partial V_{k-1}$. Following the previous proof, we will extend these hypotheses to $V_k$ by isotoping $f$ rel $V_{k-1}$ and postcomposing $\varphi$ with a suitable function, so the theorem follows. First, use Gray's Theorem on a collar of $V_{k-1}$ to extend the hypotheses over a larger sublevel set, including all specified levels below the given critical value. Now we can easily work rel $V_{k-1}$. Use the descending disks of the (finitely many) critical points in $V_k-V_{k-1}$ to construct totally real, analytic core disks in $X$ as before. Apply Eliashberg's lemma as before, then use standard models from Morse Theory to isotope $f$ and postcompose so that the newly constructed Morse function agrees with  $\varphi\circ f^{-1}$. Since the 2-handles of $V_k-V_{k-1}$ can be prechosen to lie in preassigned neighborhoods of the critical points, we can assume their attaching regions lie in standard neighborhoods of their attaching circles, so the rest of the proof works as before until the last step: We did not recover the homotopy of $f^*J_X$ over 2-handles, so the resulting homotopy fixes the contact structure only outside a collection of balls and solid tori (the co-attaching regions of the 2-handles). However, we can still assume these are disjoint from subsequent attaching regions (that is, their ascending trajectories never hit critical points), so we can redefine the homotopy as before.
\end{proof}

As explained in \cite{yfest}, Theorem~\ref{main} shows that every Stein surface becomes the interior of a Stein handlebody after suitable deformation.

\begin{cor}\label{all} \cite{yfest} Every Stein surface $V$ is isotopic within itself to the interior of a properly embedded Stein handlebody $H$ such that $V-\int H$ is diffeomorphic to $[0,1)\times \partial H$.
\end{cor}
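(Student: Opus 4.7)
The plan is to attach the handles of the Stein structure on $V$ one by one, as very thin handles lying inside $V$, producing a nested sequence of Stein sub-handlebodies whose union is a properly embedded Stein handlebody $H\subset V$ with boundary $\partial H$ sitting ``at infinity'' inside $V$, and simultaneously building the required isotopy of $\mathrm{id}_V$ stage by stage.

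Fix an exhausting plurisubharmonic Morse function $\varphi\co V\to [0,\infty)$, regular values $0=a_0<a_1<\cdots\to\infty$ with at most one critical value in each $(a_{k-1},a_k)$, and write $V_k=\varphi^{-1}[0,a_k]$. Fix also a complete Riemannian metric on $V$ and a summable sequence $\epsilon_k>0$. I would inductively build compact Stein sub-handlebodies $H_0\subset H_1\subset\cdots$ with $H_k\subset\int V_k$, and compatible embeddings $f_k\co V\to V$ isotopic to $\mathrm{id}_V$ with $f_k(V_k)=H_k$, as follows. At stage $k+1$, if $V_{k+1}-\int V_k$ is a product cobordism, extend $f_k$ by a short push-in along $-\nabla\varphi$, letting $H_{k+1}$ be $H_k$ plus a collar of thickness at most $\epsilon_{k+1}$. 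If it contains a critical point of index $i$, first isotope the corresponding descending disk rel $H_{k-1}$ to be totally real and $J$-orthogonal to $\partial H_k$ (possible because the Eliashberg--Harlamov reality obstructions vanish, as in the proof of Theorem~\ref{main2}), then apply Eliashberg's lemma to thicken it into a Stein $i$-handle $\tilde h_{k+1}\subset V$ of diameter at most $\epsilon_{k+1}$, attached to $\partial H_k$ with convex boundary and correct dividing curves as required by Proposition~\ref{SteinHB}; set $H_{k+1}=H_k\cup\tilde h_{k+1}$.

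The summability of $\{\epsilon_k\}$ ensures that $\partial H_k$ stabilizes on a nested exhaustion by compact subsets, gluing into a smooth noncompact hypersurface $\partial H=\partial\bigl(\bigcup_k H_k\bigr)\subset V$; the successive push-in collars assemble, via the gradient-like flow of $\varphi$, into the required product structure $V-\int H\cong [0,1)\times\partial H$. The $f_k$'s converge to an embedding $f_\infty\co V\to V$ with image $\int H$, and the concatenated isotopies reparametrize to an isotopy of $\mathrm{id}_V$ within $V$ ending at $f_\infty$. The main obstacle is the delicate metric bookkeeping: each $\epsilon_{k+1}$ must be chosen small enough relative to all previously attached handles so that neither $\partial H$ nor the collar develops singularities at infinity, while still leaving enough room in $V$ to execute Eliashberg's handle attachment at the appropriate scale. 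That the resulting $H$ actually satisfies Definition~\ref{SHB} and has Stein structure compatible with the complex structure of $V$ follows from Proposition~\ref{SteinHB} combined with Theorem~\ref{main}, applied to each compact $H_k$ to repair any small discrepancies between the constructed Stein structure and the one inherited from $V$.
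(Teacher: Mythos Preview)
Your approach has a genuine gap that the paper's proof addresses head-on. The object you construct is a nested union of compact Stein domains $H_k$, with handles attached one at a time in whatever order the Morse function $\varphi$ dictates. But a \emph{Stein handlebody} in the sense of Definition~\ref{SHB} is required to be stratified by index: $H_{-1}\subset H_0\subset H_1\subset H_2$, with all $i$-handles attached simultaneously to $H_{i-1}$. A generic exhausting plurisubharmonic Morse function will have critical points of different indices interleaved as one goes to infinity, so your union $\bigcup_k H_k$ is not a handlebody in the paper's sense, and neither Proposition~\ref{SteinHB} nor Theorem~\ref{main} applies to it. Your final sentence invokes exactly these results, but both take a layered handlebody as \emph{input}; they cannot manufacture the layering for you.

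This reorganization into three layers is the main content the paper isolates and delegates to Lemma~A.2 of \cite{yfest}. The paper's proof is then short: that lemma gives a diffeomorphism of $V$ with $\int H$ for some (abstract) Stein handlebody $H$, respecting almost-complex structures up to homotopy; one properly isotopes $H$ into its own interior (giving the collar $[0,1)\times\partial H$), identifies with $V$, and applies Theorem~\ref{main} once. The parenthetical remark in the paper's proof --- ``Work is required in order to convert a Morse function with infinitely many critical values into an infinite handlebody with only three layers of handles, preserving the framing condition for the 2-handles'' --- is precisely the step you have skipped. Your epsilon bookkeeping and direct handle-by-handle Eliashberg construction are closer in spirit to the proof of Theorem~\ref{main2}, which indeed avoids the reorganization but yields only a Stein open subset, not a Stein handlebody.
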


\begin{proof} By Lemma~A.2 of \cite{yfest}, $V$ is diffeomorphic to the interior of a Stein handlebody $H$, preserving the complex structures up to homotopy. (Work is required in order to convert a Morse function with infinitely many critical values into an infinite handlebody with only three layers of handles, preserving the framing condition for the 2-handles.) Properly isotope $H$ into its own interior in the obvious way, then identify the latter with $V$. Theorem~\ref{main} completes the proof.
\end{proof}

\begin{proof}[Proof of Theorem~\ref{smooth}]
The result follows immediately from Theorem~\ref{main2}. Alternatively, further homotopy of the almost-complex structure allows us to identify the model Stein surface as the interior of a Stein handlebody, using  Lemma~A.2 of \cite{yfest} if there are infinitely many critical points. Then Theorem~\ref{main} completes the proof as for the previous corollary. The latter approach uses the nontrivial Lemma~A.2, but has the advantage of exhibiting the resulting Stein open subset as the interior of a Stein handlebody properly embedded in a simple way in $U$.
\end{proof}


\end{document}